\numberwithin{equation}{section}
\numberwithin{figure}{section}
\newtheorem{theorem}{Theorem}
\newtheorem{lemma}[theorem]{Lemma}
\newtheorem{proposition}[theorem]{Proposition}
\theoremstyle{definition}
\theoremstyle{remark}
\newtheorem*{remark}{Remark}
\numberwithin{theorem}{section}
\newcommand{\ord}{\text {\rm ord}}
\newcommand{\C}{\mathbb{C}}
\newcommand{\Q}{\mathbb{Q}}
\newcommand{\Z}{\mathbb{Z}}
\newcommand{\N}{\mathbb{N}}
\newcommand{\SL}{{\text {\rm SL}}}
\newcommand{\lcm}{{\text {\rm lcm}}}
\def\H{\mathbb{H}}
\newcommand{\gen}{\text{gen}}
\newcommand{\pgen}{\text{gen}^+}
\newcommand{\pspn}{\text{spn}^+}
\newcommand{\cls}{\text{cls}}
\newcommand{\pcls}{\text{cls}^+}
\newcommand{\T}[1]{\Theta_{#1}}
\newcommand{\legendre}[2]{\left( \frac{#1}{#2} \right)}
\newcommand{\spnnorm}[1]{\theta\left(O^+({#1})\right)}
\newcommand{\erfc}{\operatorname{erfc}}
\renewcommand{\sf}{\operatorname{sf}}
\renewcommand{\phi}{\varphi}
\newif \ifdetails 
\newif\ifdiscs 
\newenvironment{extradetails}{\ifdetails \noindent  \newline \noindent \bf
	************************** Begin Extra Details **************************
	\rm \\ \bf\fi } {\ifdetails \noindent \newline
	\noindent \bf  ************************** End Extra Details ***************************\rm\\ \fi }
\title{Sums of generalized polygonal numbers of almost prime ``length''}
\author{Soumyarup Banerjee}
\address{Department of Mathematics, Indian Institute of technology Kharagpur, Kharagpur, West Bengal - 721302, India}
\email{soumya.tatan@gmail.com}
\author{Ben Kane}
\address{Mathematics Department, The University of Hong Kong, Pokfulam, HongKong}
\email{bkane@hku.hk}
\author{Daejun Kim}
\address{Department of Mathematics Education, Korea University, Seoul 02841, Republic of Korea}
\email{daejunkim@korea.ac.kr}
\date{\today}
\keywords{sums of generalized polygonal numbers, almost primes, sieving theory, shifted lattices, spinor genera}
\subjclass[2020]{11F27, 11F37, 11E20, 11E45, 11N36}
\thanks{The research of the first author was supported by INSPIRE Faculty Research Grant by DST India, Grant no:  DST/INSPIRE/04/2021/002753. The research of the second author was supported by grants from the Research Grants Council of the Hong Kong SAR, China (project numbers HKU 17314122 and HKU 17305923). This work of the third author was supported by the National Research Foundation of Korea(NRF) grant funded by the Korea government(MSIT) (RS-2024-00455692).}
\begin{document}
\begin{abstract}
In this paper, we consider sums of three generalized $m$-gonal numbers whose parameters are restricted to integers with a bounded number of prime divisors. With some restrictions on $m$ modulo $30$, we show that a density one set of integers is represented as such a sum, where the parameters are restricted to have at most 6361 prime factors. Moreover, if the squarefree part of $f_m(n)$ is sufficiently large, then $n$ is represented as such a sum, where $f_m(n)$ is a natural linear function in $n$.
\end{abstract}
\maketitle

\section{Introduction}
For $m\in\N_{\geq 3}$ and $x\in\Z$, set 
\[
p_m(x):=\frac{(m-2)x^2}{2} -\frac{(m-4)x}{2}.
\]
If $x\in\N$, then the number $p_m(x)$ counts the number of dots in a regular $m$-gon of length $x$ when $x\in\N$. More generally, for $x\in\Z$ we call these \begin{it}generalized $m$-gonal numbers\end{it}. For $n\in\N$, consider the equation 
\begin{equation}\label{eqn:sum3mgonal}
p_m(x)+p_m(y)+p_m(z)=n.
\end{equation}
For $4\nmid m$ and $m\not \equiv 2\pmod{3}$, it was shown in \cite{HaenschKane} that for $n$ sufficiently large \eqref{eqn:sum3mgonal} is solvable with $x,y,z\in\Z$. If we further restrict to $m$ odd, one can show that the genus and spinor genus of a related shifted lattice coincide, and then the result in \cite{KaneKim} may be applied to show that the restriction $m\not\equiv 2\pmod{3}$ is not necessary (see Theorem \ref{thm:3mgonaloddalmostuniversal}). We hence restrict ourselves to the case when $m$ is odd. 

To understand the relationship with shifted lattices, we complete the square so that \eqref{eqn:sum3mgonal} becomes
\[
\left( 2(m-2)x +4-m\right)^2+\left( 2(m-2)y +4-m\right)^2+\left( 2(m-2)z +4-m\right)^2=8(m-2)n+3(m-4)^2.
\]
So \eqref{eqn:sum3mgonal} is equivalent to 
\begin{equation}\label{eqn:sum3squaresCongruence}
X^2+Y^2+Z^2=8(m-2)n+3(m-4)^2,
\end{equation}
where $X=2(m-2)x+4-m$, $Y=2(m-2)y+4-m$, and $Z=2(m-2)z+4-m$. Taking $L=2(m-2)\Z^3$ and $\nu=(4-m,4-m,4-m)\in\Z^3$, \eqref{eqn:sum3squaresCongruence} is related to counting points of a given Euclidean distance on the shifted lattice $L+\nu$.

In addition to our assumption that $m$ is odd, for convenience we assume throughout that $m\not\equiv 1\pmod{3}$ and $m\not\equiv 4\pmod{5}$, due to the following technical issue.  Suppose that $m-4 \equiv 0 \pmod{3}$ or $m-4 \equiv 0 \pmod{5}$. Then $X=2(m-2)x+4-m \not \equiv 0 \pmod{p}$ is equivalent to saying that $x\not\equiv0\pmod{p}$. Hence the integers $n$ which can be represented as \eqref{eqn:sum3mgonal} with $x,y,z\not\equiv 0 \pmod{p}$ is restricted: when $p=3$, only those $n$ with $n\equiv 0\pmod{p}$, when $p=5$ only those $n$ with $n\not\equiv 0\pmod{p}$. 

In this paper, we consider the restriction of solutions to \eqref{eqn:sum3mgonal} to a thinner subset of the integers; namely, we fix $N\in\N$ and restrict $x$, $y$, and $z$ to be almost primes of order $N$ (i.e, the number of primes dividing each of $x$, $y$, and $z$ is bounded by $N$, counting multiplicities). For example, allowing $|x|,|y|,|z|$ to be $0$, $1$, or a prime, computational evidence indicates that for $m=3$ the equation \eqref{eqn:sum3mgonal} is solvable for sufficiently large $n$ (up to $8\cdot 10^6$, the equation is solvable for more than $99.65\%$ of the integers). Further computations lead to a more precise version of Gauss's Eureka Theorem, which states that \eqref{eqn:sum3mgonal} is solvable with $x,y,z\in\Z$ for all $n\in\N$; namely, it seems as though 
\[
p_3(x)+p_3(y)+p_3(z)=n
\]
is solvable for all $n\in\N$ with $|x|$, $|y|$ and $|z|$ having (counting multiplicities) at most $2$ prime factors (or equalling zero), while it appears to be solvable for $n\geq 466594$ (with only $171$ exceptional $n$) with $x,y,z\geq 0$ having at most $2$ prime factors (or equalling zero). Although these claims seem to be out of reach with current methods, under the above restrictions on $m$, we are able to show that the set of $n\in\N$ for which \eqref{eqn:sum3mgonal} is solvable with almost prime $|x|$, $|y|$, and $|z|$ is a density one set.
\begin{theorem}\label{thm:main}
Suppose that $m$ is odd, $m-4\not\equiv 0\pmod{3}$ and $m-4\not\equiv 0\pmod{5}$. 
\begin{enumerate}[leftmargin=*,label=\rm(\arabic*)]
\item There is a set $S\subseteq \N$ of density one, such that for $n\in S$ sufficiently large, the equation \eqref{eqn:sum3mgonal} is solvable with each of $x$, $y$, and $z$ containing at most $6361$ prime factors (counting multiplicities).

 In particular, letting $\sf(h)$ denote the squarefree part of $h$, we may take 
\[
S=\left\{n\in\N: \sf\left(8(m-2)n+3(m-4)^2\right)\geq \log\left(8(m-2)n+3(m-4)^2\right)^7\right\}.
\]
\item Moreover, there exists a constant $C$ such that if, for some $\varepsilon>0$, we have
\[
2^a\sf\left(8(m-2)n+3(m-4)^2\right)^{2-\varepsilon}\geq C\log\left(8(m-2)n+3(m-4)^2\right)^{13},
\]
then \eqref{eqn:sum3mgonal} is solvable with each of $x$, $y$, and $z$ containing at most $6359$ odd prime factors (counting multiplicity) and $\ord_2(x),\ord_2(y),\ord_2(z)\leq a$.
\end{enumerate}

\end{theorem}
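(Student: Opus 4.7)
The plan is to recast the problem via the substitution $X = 2(m-2)x + (4-m)$ (and analogously $Y,Z$) as counting representations of $N := 8(m-2)n + 3(m-4)^2$ by the shifted ternary form $X^2+Y^2+Z^2$ on the shifted lattice $L+\nu$, and then to apply a three-dimensional weighted sieve to the parameters $x,y,z$ to force each to be an almost prime with at most $6361$ prime factors. Writing
\[
r(N) := \#\{(x,y,z)\in\Z^3 : p_m(x)+p_m(y)+p_m(z) = n\},
\]
and, for squarefree $d$, letting $r_d(N)$ denote the analogous count restricted to triples with $d \mid xyz$ (split according to which of $x,y,z$ each prime divisor of $d$ divides), the first step is to establish, for $d$ up to some level $D = N^{\theta}$, an asymptotic
\[
r_d(N) = g(d)\,\mathcal{M}(N) + E_d(N), \qquad E_d(N) = O(N^{1/2-\delta}),
\]
where $\mathcal{M}(N)\asymp \sqrt{N}$ is the Siegel-type main term on the shifted lattice and $g$ is a multiplicative sieve density with $g(p) = O(1/p)$ at almost all primes.

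The main term $\mathcal{M}(N)$ is supplied by the Siegel mass formula for $L+\nu$; here the assumption that $m$ is odd is needed so that the genus and spinor genus of the shifted lattice coincide (as in the argument underlying Theorem \ref{thm:3mgonaloddalmostuniversal}), removing any spinor obstruction, and the conditions $m\not\equiv 1\pmod{3}$ and $m\not\equiv 4\pmod{5}$ ensure that the local densities at $3$ and $5$ are well-behaved and nonzero on the relevant congruence classes. The hypothesis $\sf(N)\geq \log(N)^7$ is used to guarantee that $\mathcal{M}(N)$ is at least of size $\sqrt{N}/\log(N)^{7/2}$, so that the main term dominates the logarithmic losses inherent in the weighted sieve. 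For the error $E_d(N)$, I would decompose the shifted theta series $\Theta_{L+\nu}$ into its Eisenstein and cuspidal components; the Eisenstein part reproduces $\mathcal{M}(N)$ via the Siegel--Weil formula, while the cuspidal contribution is bounded using Duke--Iwaniec--Blomer--Harcos style subconvexity estimates for Fourier coefficients of half-integral weight cusp forms, producing a power-saving error with polynomial dependence on $d$.

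With the asymptotic in hand, I would feed it into a weighted sieve of dimension $\kappa = 3$ (for example the Diamond--Halberstam--Richert $\Lambda^2$ weighted sieve), sieving each of $|x|,|y|,|z|$ independently against primes up to $z = N^{\eta}$. The sieve then produces a positive lower bound for the count of triples in which each of $|x|,|y|,|z|$ has at most $r_0$ prime factors, provided $\eta$ and $r_0$ satisfy the appropriate dimension-$3$ weighted-sieve inequality with level $\theta$. The specific value $r_0 = 6361$ should emerge from the resulting numerical optimization, determined by the level of distribution obtained in the error analysis above. Part (2) should follow by modifying the sieve so that the prime $2$ is excluded from its support and handled separately via the $\ord_2$ condition, which effectively removes two factors from the sieve-theoretic count and reduces $r_0$ from $6361$ to $6359$; the stronger hypothesis involving $\sf(N)^{2-\varepsilon}$ and the exponent $13$ in place of $7$ account for the additional logarithmic slack required to absorb this refinement.

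The main obstacle is establishing $E_d(N) = O(N^{1/2-\delta})$ with enough uniformity in $d$ up to a useful level $D = N^{\theta}$. This requires subconvexity bounds for Fourier coefficients of half-integral weight cusp forms, uniform both in the level (through the conductor of the shift $\nu$ modulo $d$) and in the congruence data coming from the sieve condition, together with careful control of the local densities at primes dividing both $N$ and $d$. Once this deep analytic input is in place, verifying the weighted-sieve inequality producing the constants $6361$ and $6359$ is a finite numerical computation following standard sieve templates, and the claim that $S$ has density one in $\N$ reduces to an elementary fact about squarefree parts of values of the linear polynomial $n\mapsto 8(m-2)n + 3(m-4)^2$.
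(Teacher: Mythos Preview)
Your proposal has the broad shape right --- shifted-lattice formulation, theta decomposition, subconvexity for the cuspidal part, and a sieve --- but it misidentifies the central difficulty, and this misidentification would cause the argument to fail.

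You write that ``the assumption that $m$ is odd is needed so that the genus and spinor genus of the shifted lattice coincide \dots\ removing any spinor obstruction.'' This is true only for the base coset $X^{\bm{1}}$ (Theorem~\ref{thm:3mgonaloddalmostuniversal}). But the sieve forces you to count on the sublattices $X^{\bm{d}}$ with $d_j\mid x_j$, and for these the spinor genus need \emph{not} coincide with the genus. Proposition~\ref{proposition:unarypartforodd:d} shows coincidence when all $d_j$ are odd, but once you sieve out powers of $2$ (which you must, to bound $\ord_2(x_j)$), Proposition~\ref{proposition:unarypartford:general} gives a genuine nonzero difference $r_{\pspn(X^{\bm{d'}})}(h)-r_{\pgen(X^{\bm{d'}})}(h)$. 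This difference is supported on those $h$ whose squarefree part satisfies rigid local conditions at every prime divisor, and its size is controlled by a product over $p\mid\sf(h)$ that behaves like $\sf(h)^{-2+\varepsilon}$ (see \eqref{eqn:SahETz0} and \eqref{eqn:SahETbound}).

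Consequently your explanation of the $\sf(h)$ hypothesis is wrong. It is \emph{not} needed to make the Siegel main term large --- Lemma~\ref{lem:X1lower} already gives $r_{\pgen(X^{\bm{1}})}(h)\gg_{\varepsilon} h^{1/2-\varepsilon}$ unconditionally via Siegel's ineffective bound. The condition $2^a\sf(h)^{2-\varepsilon}\gg \log(h)^{13}$ is instead exactly what is required to make the spinor-genus error $S_{a,h}^{\operatorname{ET}}(z_0)$ small relative to the main term after the sieve; this is the content of \eqref{eqn:sflowerboundassume}. Without recognizing that the spinor contribution survives and must be bounded separately, your sketch gives no mechanism producing the $\sf(h)$ restriction, and the argument would not close. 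The paper also uses a two-stage iterated sieve (first up to $z_0=\log(z)^{13}$, then from $z_0$ to $z$) rather than a single dimension-$3$ weighted sieve; this staging is what allows the spinor error, which carries a $\log(z_0)^{13}$ factor, to be absorbed.
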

\begin{remark}
Due to the ineffective bounds for class numbers of imaginary quadratic fields, the results are ineffective, so one cannot obtain explicit results, even for fixed $m$.
\end{remark}
Results resembling Theorem \ref{thm:main} exist for sums of four and three squares (see Br\"udern--Fouvry \cite{BrudernFouvry} and Blomer--Br\"udern \cite{BlomerBrudern}, respectively). The proof of Theorem \ref{thm:main} differs from the proofs in \cite{BrudernFouvry} and \cite{BlomerBrudern} because the congruence conditions on $X$, $Y$, and $Z$ in \eqref{eqn:sum3squaresCongruence} lead to possible issues caused by obstructions in what is known as the spinor genus of a quadratic form with congruence conditions (from an alternative perspective, the issue arises from the fact that coefficients of weight $\frac{3}{2}$ unary theta functions, defined in \eqref{eqn:unarydef}, do not satisfy the Ramanujan--Petersson bound). In the four variable case considered by Br\"udern and Fouvry, the spinor genus never causes complication. In the case considered by Blomer and Br\"udern, the spinor genus always coincides with the genus of the corresponding quadratic form (and hence the cuspidal part of the theta function is orthogonal to the subspace of unary theta functions). However, in the case that we consider, we cannot exclude the contribution from the spinor genus in such a direct way, and the complications that arise lead to technical difficulties; this is the reason why we cannot bound the number of primes dividing $x$, $y$ and $z$ in \eqref{eqn:sum3mgonal} for all $n\in\N$, only obtaining our main result for a density one set.

The paper is organized as follows. In Section \ref{sec:prelim}, we recall the theory of quadratic lattices and shifted lattices and the interrelation between modular forms and theta series. In Section \ref{sec:Eisenstein}, we compute local densities for the Eisenstein series component of the theta series and obtain lower bounds for the Fourier coefficients of the Eisenstein series component. In Section \ref{sec:unary}, we determine a bound on the Fourier coefficients which come from the contribution of unary theta functions; on the algebraic side, these are the contribution from the spinor genus. The Fourier coefficients coming from the contribution of the cuspidal part of the theta function are bounded in Section \ref{sec:cuspform}. In Section \ref{sec:mainerror}, we obtain bounds to set up a sieve needed in the proof of Theorem \ref{thm:main}. We apply sieving techniques and prove Theorem \ref{thm:main} in Section \ref{sec:final}.

\section{Preliminaries}\label{sec:prelim}

\subsection{Setup and notation}
Throughout this paper, we suppose that $m\equiv 1\pmod{2}$. Let $V$ be a positive definite quadratic space over $\Q$ of rank $k$ with an associated non-degenerate symmetric bilinear form 
\[
B:V\times V \rightarrow \Q \quad \text{with} \quad Q(x)=B(x,x)\text{ for any } x\in V.
\]
A \begin{it}quadratic $\Z$-lattice\end{it} is a free $\Z$-module  
\[
L=\Z e_1 + \Z e_2 + \cdots + \Z e_k
\]
of rank $k$, where $e_1,\dots,e_k\in V$ is a basis of $V$. The \begin{it}dual lattice of $L$\end{it} is defined by
\[
L^{\sharp}:=\{v\in V: B(v,x)\in \Z\ \forall x\in L\}.
\]
The $k\times k$ symmetric matrix $A_L:=(B(e_i,e_j))$ is called the \begin{it}Gram matrix\end{it} of $L$ with respect to a basis $v_1,\ldots, v_k$, and we write $L \cong (B(e_i,e_j))$. The discriminant $d_L$ is defined to be the determinant of $(B(e_i,e_j))$. If $B(e_i,e_j)=0$ for any $i\neq j$ (equivalently, if the Gram matrix is diagonal), then we simply write $L\cong \left< Q(e_1), \ldots , Q(e_k) \right>$.
For a rational prime $p$, we define the {\em localization} of $V$ and $L$ by $V_p:=V\otimes_\Q \Q_p$ and $L_p:=L\otimes_\Z Z_p$, respectively.

By a {\em lattice coset} (or simply {\em coset}), we mean a coset $L+\nu$ in $V/L$ where $\nu$ is a vector in $V$. If $\nu \in L$, then the lattice coset $L+\nu$ is nothing but the lattice $L$. Any elements in $L+\nu$ are of the form $x+\nu$ with $x\in L$. The smallest integer $a$ such that $a\nu\in L$ is called the {\em conductor} of the coset $L+\nu$. As in \eqref{eqn:sum3mgonal}, let 
\[
f(\bm{x}):=\sum_{j=1}^3 p_m(x_j)
\]
 be a sum of three generalized $m$-gonal numbers. In order to study the non-negative integers $n$ represented by $f$, for each such $n$ we write
\[
h=h(n):=8(m-2)n+3(m-4)^2.
\]
For $\bm{1}=(1,1,1)\in\N^3$, let $X^{\bm{1}}:=L^{\bm{1}}+\nu$ where 
\[
L^{\bm{1}}=\Z e_1 +\Z e_2 +\Z e_3 \cong \left<4(m-2)^2,4(m-2)^2,4(m-2)^2\right>,
\]
and
\[
\nu = \frac{4-m}{2(m-2)}(e_1+e_2+e_3)\in \Q L^{\bm{1}}.
\]
Then $n$ is represented by $f$ if and only if $h$ is represented by $X^{\bm{1}}$.
More generally, for $\bm{d}=(d_1,d_2,d_3)\in\N^3$, define the coset $X^{\bm{d}}:=L^{\bm{d}}+\nu$ where $L^{\bm{d}}$ is the sublattice of $L^{\bm{1}}$ defined by
\[
L^{\bm{d}}=\Z (d_1e_1)+\Z (d_2e_2)+ \Z (d_3e_3)\cong \left<4(m-2)^2d_1^2,4(m-2)^2d_2^2,4(m-2)^2d_3^2\right>.
\] 
Note that if $\bm{d'}\mid \bm{d}$, that is, $d_i'\mid d_i$ for any $i=1,2,3$, then
\[
X^{\bm{d}} = L^{\bm{d}} +\nu \subseteq L^{\bm{d'}} +\nu = X^{\bm{d'}}.
\]
We further note that $n$ is represented by $f$ with the restriction $d_j\mid x_j$ if and only if $h$ is represented by $X^{\bm{d}}$.
We are interested in the number $r(h,X^{\bm{d}})$ of solutions $\bm{y}\in\Z^3$ to 
\[
h=8(m-2)n+3(4-m)^2=\sum_{j=1}^3(2(m-2)d_jy_j+4-m)^2.
\]

Let $O(V)$ and $O^+(V)$ be the {\em orthogonal group} and {\em proper orthogonal group} of $V$, respectively.
The {\em class} $\cls(X)$ and the {\em proper class} $\pcls(X)$ of a coset $X$ are defined as the orbits of $X$ under the action of $O(V)$ and $O^+(V)$, respectively.
Let $O_A(V)$ and $O_A^+(V)$ be the {\em ad{\'e}lizations} of $O(V)$ and $O^+(V)$, respectively.
By \cite[Lemma 4.2]{ChanOh13}, $O_A(V)$ acts on $X$, and the orbit of $X$ under the action of $O_A(V)$ is called the {\em genus} of $X$, denoted by $\gen(X)$, while the orbit of $X$ under the action of $O_A^+(V)$ is called {\em proper genus} of $X$, denoted by $\pgen(X)$.
Let $\theta:O^+(V)\rightarrow \Q^\times/(\Q^\times)^2$ be the {\em spinor norm map} (cf. \cite[$\S 55$]{OMBook}) and denote its kernel by $O'(V)$. Let $O_A'(V)$ be the ad{\'e}lization of $O'(V)$.
The {\em proper spinor genus} $\pspn(X)$ of $X$ is defined to be the orbit of $X$ under the action of $O^+(V)O_A'(V)$. Clearly,
$$
\pcls(X)\subseteq \pspn(X) \subseteq \pgen(X).
$$
Set
$$
O^+(L+\nu)=\{\sigma\in O^+(V) : \sigma(L+\nu)=L+\nu\} \quad \text{and} \quad o^+(L+\nu)=|O^+(L+\nu)|.
$$
The groups $O^+(L_p+\nu)$ for any prime $p$ and $O_A^+(L+\nu)$ may analogously be defined.

It was proven in \cite[Proposition 2.5]{Xu} that the number of proper spinor genera in $\pgen(L+\nu)$ is given by
\begin{equation}\label{eqn:numberofproperspinorgenera}
\left[I_\Q : \Q^\times \prod\limits_{p\in \Omega} \spnnorm{L_p+\nu}\right],
\end{equation}
where $I_\Q$ is the id{\`e}le group and $\Omega$ is the set of all places of $\Q$ including the infinite place $\infty$.
Hence, in order to verify the number of proper spinor genera in the genus, we need to compute the local spinor norm group $\spnnorm{L_p+\nu}$ of $L+\nu$ at each prime $p$.

\subsection{Modular forms}

We briefly introduce modular forms of half-integral weight below. We refer readers to \cite{OnoBook} for an introduction to modular forms of integral weight or for more details.

For a positive integer $N$, we require natural congruence subgroups of $\SL_2(\Z)$ defined by 
$$
\begin{aligned}
	&\Gamma_0(N)=\left\{\left(\begin{smallmatrix} a&b\\c&d \end{smallmatrix}\right) \in \SL_2(\Z) : c \equiv 0 \pmod{N} \right\},\\
	&\Gamma_1(N)=\left\{\left(\begin{smallmatrix} a&b\\c&d \end{smallmatrix}\right) \in \Gamma_0(N) : a,d \equiv 1 \pmod{N} \right\}.
\end{aligned}
$$
For $\gamma=\left(\begin{smallmatrix} a&b\\c&d \end{smallmatrix}\right)\in \Gamma_0(4)$ and $\kappa\in \frac{1}{2}+\Z$, define the {\em slash operator} on a function $f:\H \rightarrow \C$ by
$$
f\mid_{\kappa}\gamma (z)= \legendre{c}{d}\varepsilon_d^{2\kappa}(cz+d)^{-\kappa}f(\gamma z),
$$
where $\varepsilon_d=1$ if $d\equiv 1 \pmod{4}$, $\varepsilon_d=i$ if $ d\equiv 3 \pmod{4}$, $\legendre{\cdot}{\cdot}$ is the Kronecker--Jacobi--Legendre symbol, and $\gamma$ acts on $\H$ by fractional linear transformations $\gamma z := \frac{az+b}{cz+d}$. We call $f$ a {\em (holomorphic) modular form} of weight $\kappa$ on $\Gamma\subseteq \Gamma_0(4)$ ($\Gamma$ a congruence subgroup containing $\left(\begin{smallmatrix} 1&1\\0&1 \end{smallmatrix}\right)$) with character $\chi$ if
\begin{enumerate}[label={\rm (\arabic*)}]
	\item $f|_\kappa\gamma = \chi(d)f$ for any  $\gamma=\left(\begin{smallmatrix} a&b\\c&d \end{smallmatrix}\right)\in \Gamma$,
	\item $f$ is holomorphic on $\H$,
	\item $f(z)$ grows at most polynomially in $y$ as $z=x+iy\rightarrow \Q\cup \{i\infty\}$.
\end{enumerate}
We moreover call $f$ a {\em cusp form} if $f(z)\rightarrow 0$ as $z\rightarrow \Q\cup \{i\infty\}$.
The space of modular forms (resp. cusp forms) of weight $\kappa$, character $\chi$ and congruence subgroup $\Gamma$, will be denoted by $M_{\kappa}(\Gamma,\chi)$ (resp. $S_{\kappa}(\Gamma,\chi)$).
The space of {\em Eisenstein series}, denoted by $E_{\kappa}(\Gamma,\chi)$, is the orthogonal complement of $S_{\kappa}(\Gamma,\chi)$ in $M_{\kappa}(\Gamma,\chi)$ with respect to the Petersson inner product (for an introduction and properties of the inner product, see \cite[Chapter III]{Lang}). If $f$ is a modular form for a congruence group $\Gamma$ containing $\left(\begin{smallmatrix} 1&1\\0&1\end{smallmatrix}\right)$, then $f$ has a Fourier series expansion 
$$
f(z)=\sum\limits_{n=0}^\infty a(n)q^n,
$$
where $q=e^{2\pi iz}$. In particular, if $f$ is a cusp form, then $a(0)=0$. For $\kappa=\frac{3}{2}$, we further require the subspace 
\begin{equation}\label{eqn:unarydef}
U_t(\Gamma,\chi):=S_{3/2}(\Gamma,\chi) \cap \left\{f(z)=\sum\limits_{n=1}^\infty a(n)nq^{tn^2}\right\}
\end{equation}
spanned by unary theta functions.  For a Dirichlet character $\psi$ modulo $m_\psi$, consider
\[
 h(z,\psi)=\sum_{n=1}^\infty \psi(n)nq^{n^2}.
\]
Note that for $L\mid N$ the space $U_t(\Gamma_0(N)\cap\Gamma_1(L),\chi)$ is spanned by 
\begin{equation}\label{UtNchi-is-spanned-by}
	\left\{h(tu^2z,\psi) : u\in\Z, \ 4tm_\psi^2u^2\mid N, \ \psi = \chi \legendre{-4t}{\cdot}\eta,\ \eta\pmod{L}\right\}.
\end{equation}
Here $\eta$ runs through all characters modulo $L$ and $m_{\psi}$ is the conductor of $\psi$. This follows from the fact that the spaces $h(tu^2z,\psi)$ for different $t$ or $\psi$ are orthogonal to each other with respect to Petersson inner product and the modularity given in \cite[Proposition 2.2]{Shimura}.

\subsection{Elementary theta functions} Let $k$ be a positive integer, $A$ a positive definite $k\times k$ symmetric matrix, $h$ an element in $\Z^k$, and $N$ a positive integer satisfying the following conditions:
\begin{equation}\label{condition-thetafunctionofShimura}
	\text{Both $A$ and $NA^{-1}$ have coefficients in $\Z$; } Ah\in N\Z^k.
\end{equation}
In \cite{Shimura}, Shimura defined the theta function 
\begin{equation}\label{defn-thetafunctionofShimura}
	\vartheta(z;h,A,N,P)=\sum\limits_{x\in\Z^k,\, x\equiv h\pmod{N}} P(x) \cdot q^{ x^tAx/2N^2},
\end{equation}
where $P(x)$ is a spherical function of order $\nu\in\Z_{\ge 0}$ with respect to $A$. In this article, we only concern the case when $P(x)=1$ where $\nu=0$, or $P(x)=x$ with $k=1$ where $\nu=1$. 
Indeed, the function $h(z,\psi)$ defined in \eqref{UtNchi-is-spanned-by} is given by a linear combination of the theta functions corresponding to the latter case (see \cite[Proposition 2.2]{Shimura}).
Moreover, Shimura \cite{Shimura} proved the following transformation formula of the theta functions.
\begin{proposition}[Proposition 2.1 of \cite{Shimura}]\label{prop-Shimura-transformation-theta}
	Let $\vartheta(z;h,A,N,P)$ be defined by \eqref{defn-thetafunctionofShimura} under the assumption \eqref{condition-thetafunctionofShimura}, and let $\gamma=\left(\begin{smallmatrix}
		a&b\\c&d
	\end{smallmatrix}\right)\in \SL_2(\Z)$ with $b\equiv 0\pmod{2}$ and $c\equiv 0 \pmod{2N}$. Then
\begin{multline*}
\vartheta(\gamma(z);h,A,N,P)=e(ab\cdot h^tAh/2N^2)\legendre{\det(A)}{d}\legendre{2c}{d}^k\varepsilon_d^{-k}\\
\times(cz+d)^{(k+2\nu)/2}\vartheta(z;ah,A,N,P),
\end{multline*}
where $e(z):=e^{2\pi i z}$.
\end{proposition}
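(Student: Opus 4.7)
The plan is to derive the transformation formula by Poisson summation, which is the standard route. When $c=0$, the matrix $\gamma$ acts by $z\mapsto z+b$ with $b$ even, and the claim reduces to showing that $b\cdot h^tAh/2N^2\in\Z$; this follows from $Ah\in N\Z^k$ together with the parity of $b$. I therefore assume $c\neq 0$ and start from the decomposition
\[
\gamma(z)=\frac{a}{c}-\frac{1}{c(cz+d)},
\]
which factors the Gaussian kernel as a product of a root-of-unity piece $e\!\left(\tfrac{a}{c}\cdot\tfrac{x^tAx}{2N^2}\right)$ depending only on $x\bmod c$ and a rapidly decaying piece $e\!\left(-\tfrac{1}{c(cz+d)}\cdot\tfrac{x^tAx}{2N^2}\right)$ to which Poisson summation can be applied.

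My next step is to break the summation over $\{x\in\Z^k:x\equiv h\pmod N\}$ into residue classes modulo $c$ (this is where the hypothesis $2N\mid c$ intervenes) and apply Poisson summation to each coset in $c\Z^k$. The key feature of the spherical polynomial $P$ is that it is harmonic of order $\nu$ with respect to $A$, so the Fourier transform of $P(x)\,e(-\tau\,x^tAx/2)$ equals $P(\xi)\,e(\xi^tA^{-1}\xi/2\tau)$ up to the scalar factor $(\det A)^{-1/2}\tau^{-(k+2\nu)/2}$. Applied with $\tau$ proportional to $1/(cz+d)$, this produces exactly the required $(cz+d)^{(k+2\nu)/2}$ multiplier and transfers the summation to the dual lattice $A^{-1}\Z^k$, which lies inside $\frac{1}{N}\Z^k$ because $NA^{-1}\in M_k(\Z)$. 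After re-indexing, the dual sum becomes $\vartheta(z;ah,A,N,P)$, since multiplication by $a$ is invertible modulo $N$.

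The most delicate step, and the main obstacle, is identifying the scalar multiplier that Poisson summation produces as exactly
\[
e\!\left(ab\cdot h^tAh/2N^2\right)\legendre{\det A}{d}\legendre{2c}{d}^k\varepsilon_d^{-k}.
\]
Stripped of its analytic factors, this multiplier is the Gauss sum $c^{-k}(\det A)^{-1/2}\sum_{u\in(\Z/c\Z)^k}e(a\,u^tAu/2c)$. I would evaluate it by the Chinese remainder theorem, factoring $c$ into prime powers and diagonalizing $A$ at each prime dividing $c$ (which is legitimate at primes where $\det A$ is a unit, in particular at primes dividing $c$ but not $d$). At odd primes the classical quadratic Gauss sum $\sum_{j\bmod n}e(j^2/n)=(1+i)\varepsilon_n^{-1}\sqrt{n}$, together with quadratic reciprocity to convert $\legendre{a}{d}$ into $\legendre{-c}{d}$, produces the local Kronecker symbols $\legendre{\det A}{d}\legendre{2c}{d}^k$ and the factor $\varepsilon_d^{-k}$. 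The $2$-adic contribution is the technically hardest piece and is precisely what forces the hypotheses $2\mid b$ and $2N\mid c$; tracking signs, powers of $(1+i)$, and the root-of-unity contribution $e(ab\cdot h^tAh/2N^2)$ (which arises from the cross term in the expansion of $x^tAx$ after shifting $x\mapsto ah+Nu$) is where the bookkeeping becomes intricate. Once the local factors are assembled, the stated transformation formula falls out.
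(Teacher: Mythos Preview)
The paper does not prove this proposition at all: it is quoted verbatim as Proposition 2.1 of Shimura's 1973 paper \cite{Shimura} and used as a black box. Your outline via Poisson summation is indeed the classical argument Shimura employs, so in spirit you are reproducing the original source rather than offering an alternative.

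A couple of small inaccuracies in your sketch are worth flagging. In the $c=0$ case you say the claim reduces to $b\cdot h^tAh/(2N^2)\in\Z$; that is neither true nor needed. What is needed is that $e\!\left(b\,x^tAx/(2N^2)\right)$ is \emph{constant} on the coset $x\equiv h\pmod N$, which follows from $Ah\in N\Z^k$ since $x^tAx-h^tAh\in N^2\Z$ for $x=h+Ny$; the constant value is precisely the stated factor $e(ab\cdot h^tAh/2N^2)$. Second, the Fourier transform of $P(x)e(-\tau\,x^tAx/2)$ does not return $P(\xi)$ but rather (up to the scalar you wrote) $P(A^{-1}\xi)$; the reason the dual sum still has the form $\vartheta(z;ah,A,N,P)$ is that for spherical $P$ one has $P(A^{-1}\xi)$ proportional to $P$ evaluated at the new summation variable after the change $\xi\mapsto A\xi'$. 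Finally, the natural move is not to ``break into residue classes modulo $c$'' but simply to write $x=h+Ny$ and apply Poisson summation in $y\in\Z^k$; the hypothesis $2N\mid c$ then enters when one evaluates the resulting Gauss sum, not in the decomposition itself. With these corrections your route is exactly Shimura's.
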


\subsection{Theta series for cosets}\label{subsection-prelim-thetaseriesofcosets} 
Let $X=L+\nu$ be a coset on a quadratic space $V$ of rank $k$ and we assume throughout that $B(X,X)\subseteq \Z$. For a positive integer $n$, we define
$$
R_X(n):=\{x\in X : Q(x)=n\} \quad \text{and} \quad r_X(n):= |R_X(n)|,
$$
and the theta series $\T{X}(z)$ of the coset $X$ is defined as
$$
\T{X}(z):=\sum\limits_{x\in X} q^{Q(x)} = \sum\limits_{n=0}^\infty r_X(n) q^n.
$$
We define the theta series $\T{\pgen(X)}(z)$ of $\pgen(X)$ and $r_{\pgen(X)}(n)$ by
\begin{equation}\label{defn-thetaofpropergenus}
	\T{\pgen(X)}(z)=\sum\limits_{n=0}^\infty r_{\pgen(X)}(n)q^n:=\left(\sum\limits_{Y\in\pgen(X)}\frac{1}{o^+(Y)}\right)^{-1}\cdot \left(\sum\limits_{Y\in\pgen(X)}\frac{\T{Y}(z)}{o^+(Y)}\right)
\end{equation}
and the theta series $\T{\pspn(X)}(z)$ of $\pspn(X)$ and $r_{\pspn(X)}(n)$ by
\begin{equation}\label{defn-thetaofproperspinorgenus}
	\T{\pspn(X)}(z)=\sum\limits_{n=0}^\infty r_{\pspn(X)}(n) q^n:= \left(\sum\limits_{Y\in\pspn(X)}\frac{1}{o^+(Y)}\right)^{-1}\cdot \left(\sum\limits_{Y\in\pspn(X)}\frac{\T{Y}(z)}{o^+(Y)}\right).
\end{equation}
The summation runs over a system of representatives of proper classes in the proper genus or in the proper spinor genus of $X$.

For any non-zero integer $d$, let $\chi_d$ denote the character $\chi_d(\cdot)=\legendre{d}{\cdot}$ obtained from the Kronecker symbol. Using Proposition \ref{prop-Shimura-transformation-theta}, a straightforward calculation (see \cite[Proposition 2.3]{KaneKim}) shows that the theta series of cosets of rank $k$ are modular forms of weight $k/2$.
\begin{proposition}\label{prop-thetaofcoset-is-a-modularform}
	Let $X=L+\nu$ be a coset on a quadratic space $V$ of odd rank $k$ and conductor $a$. Let $N_L$ be the level of $L$ and $d_L$ the discriminant of $L$. Then 
	$$
	\T{X}(z)\in  M_{k/2}(\Gamma_0(4N_L)\cap \Gamma_1(a), \chi_{4d_L}). 
	$$
\end{proposition}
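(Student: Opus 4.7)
The plan is to express $\Theta_X(z)$ as one of Shimura's theta functions $\vartheta(z;h,A,N,1)$ from \eqref{defn-thetafunctionofShimura} and then invoke the transformation formula in Proposition~\ref{prop-Shimura-transformation-theta}. Fix a $\Z$-basis $e_1,\dots,e_k$ of $L$ with Gram matrix $A_L=(B(e_i,e_j))$. Since $a$ is the conductor of $X$, one may write $\nu=\tfrac{1}{a}\sum_i h_i e_i$ with $h=(h_1,\dots,h_k)\in\Z^k$, and parametrise $x\in X$ as $x=\tfrac{1}{a}\sum y_i e_i$ with $y\equiv h\pmod{a}$. A direct computation then gives
\[
\Theta_X(z)=\sum_{y\in\Z^k,\ y\equiv h\,(a)}q^{y^tA_Ly/a^2}=\vartheta(z;h,2A_L,a,1).
\]
If the integrality conditions in \eqref{condition-thetafunctionofShimura} fail at $N=a$, I would replace $a$ by $N_0:=\lcm(a,2N_L)$ and apply the substitution $y\mapsto(N_0/a)y$, rewriting $\Theta_X$ as the single Shimura theta function $\vartheta(z;(N_0/a)h,2A_L,N_0,1)$, for which \eqref{condition-thetafunctionofShimura} holds by the definitions of the level $N_L$ and of the conductor $a$.

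For $\gamma=\sm{a'}{b}{c}{d}\in\Gamma_0(4N_L)\cap\Gamma_1(a)$, Shimura's hypotheses $2\mid b$ and $2N_0\mid c$ in Proposition~\ref{prop-Shimura-transformation-theta} are not both automatic. The divisibility $2N_0\mid c$ is arranged by the factor $4N_L$ in the level, while the parity of $b$ is handled by factoring $\gamma=T^m\gamma'$ with $T=\sm{1}{1}{0}{1}$ and $\gamma'$ of even upper-right entry; the relation $\Theta_X\mid_{k/2}T=\Theta_X$ follows from $Q(X)\subseteq\Z$. Applying Proposition~\ref{prop-Shimura-transformation-theta} to $\gamma'$, using $a'\equiv 1\pmod{a}$ from $\Gamma_1(a)$ (so that $a'(N_0/a)h\equiv (N_0/a)h\pmod{N_0}$ and the theta on the right-hand side is again $\Theta_X$) and using $h^tA_Lh/a^2=Q(\nu)\in\Z$ (so that the exponential prefactor equals $1$), yields a clean transformation law with multiplier $\pfrac{\det(2A_L)}{d}\pfrac{2c}{d}^k\varepsilon_d^{-k}$.

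Inserting this into the slash operator of weight $\kappa=k/2$ and simplifying the Kronecker-symbol factors — using $k$ odd, $\det(2A_L)=2^k d_L$, and $\pfrac{c}{d}^{k+1}=1$ — collapses the multiplier into the character $\chi_{4d_L}(d)$. Holomorphy on $\H$ is immediate from absolute convergence of the theta series, and polynomial growth at every cusp is standard for positive-definite theta series of lattice cosets. The main obstacle is the bookkeeping in these Kronecker-symbol manipulations and in reducing to the non-automatic divisibility/parity hypotheses on $\gamma$ in Shimura's formula; this is precisely the computation explicitly carried out in \cite[Proposition~2.3]{KaneKim}, to which one can appeal.
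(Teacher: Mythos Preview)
Your proposal is correct and follows essentially the same route as the paper. In fact, the paper itself does not write out a proof: it simply states that ``using Proposition~\ref{prop-Shimura-transformation-theta}, a straightforward calculation (see \cite[Proposition 2.3]{KaneKim})'' yields the result, so your expansion --- realising $\Theta_X$ as a Shimura theta function, adjusting the modulus to $N_0=\lcm(a,2N_L)$, reducing to even $b$ via $T$-translation, and collapsing the Kronecker symbols to $\chi_{4d_L}$ --- is precisely the computation the paper defers to \cite{KaneKim}.
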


\section{The Eisenstein series components}\label{sec:Eisenstein}
We recall the work of Shimura \cite{ShimuraCongruence}.
Let $X=L+\nu$ be a lattice coset of rank $k$, and let $h\in\Q$.
For any prime $p$ and $z\in\Q_p$, we define $\bm{e}_p(z)=e^{2\pi i (-y)}$ with $y\in \cup_{t=1}^\infty p^{-t}\Z$ such that $z-y\in\Z_p$. 
Let $\lambda(v)$ and $\lambda_p$ be the characteristic functions of $X$ and $X_p$ respectively.
Normalizing the measures $dv$ and $d\sigma$ on $L_p$ and $\Z_p$ so that $\int_{L_p}dv=\int_{\Z_p} d\sigma =1$, the local density at $p$ is defined by 
\begin{equation}\label{eqn:defn-localdensity}
	\begin{aligned}
		b_p(h,\lambda,0)&:=\int_{\Q_p} \int_{V_p} \bm{e}_p(\sigma (Q(v)-h))\lambda_p(v) dv d\sigma\\
		&=\int_{\Q_p} \int_{L_p} \bm{e}_p(\sigma (Q(v+\nu)-h)) dv d\sigma.
	\end{aligned}
\end{equation}
Shimura \cite[Theorem 1.5]{ShimuraCongruence} showed that the $h$-th Fourier coefficient of the Eisenstein series part of $\T{X}$ can be expressed as the product of local densities. We provide the statement for the special case when $k=3$.
\begin{theorem}\label{thm:ShimuraThm1.5}
	Let $X=L+\nu$ be a ternary lattice coset, and let $h\in\Q$. Then
	\[
	r_{\pgen(X)}(h)=\frac{(2\pi)^{3/2}h^{1/2}L(1,\psi_h)}{[\frac{1}{2}L^\sharp : L]^{1/2}\Gamma(3/2)\zeta(2)} \cdot \prod_{p\mid e_1} \frac{b_p(h,\lambda,0)(1-\psi_h(p)p^{-1})}{1-p^{-2}} \cdot \prod_{p\mid he', p\nmid e_1} \gamma_p(3/2).
	\]
	Here $\psi_h(\cdot)$ is the primitive character associated to the real character $\legendre{-h}{\cdot}$, $e_1$ is the product of all primes $p$ at which $h\notin \Z_p$ or $L_p$ is not maximal or $X_p\neq L_p$, $e'$ is the product of all primes such that $L_p^\sharp \neq 2L_p$, and the numbers $\gamma_p(s)$ are given in \cite[Section 1.6]{ShimuraCongruence}.
\end{theorem}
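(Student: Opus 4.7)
The stated formula is the specialization to rank $k=3$ of the general Fourier coefficient formula for the Eisenstein component of a coset theta series proven in \cite[Theorem 1.5]{ShimuraCongruence}, so the plan is essentially to read off this specialization from Shimura's result and verify the bookkeeping.

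The key step is to perform Shimura's Euler factorization of the $h$-th Fourier coefficient of the Eisenstein component, writing it as a global factor (an archimedean piece, a normalization by the dual-lattice index $[\frac{1}{2}L^\sharp:L]$, and a Dirichlet $L$-value) times a product of local densities over finite primes. At a generic prime $p$ dividing neither $he'$ nor $e_1$, the local density is absorbed into the quotient $L(1,\psi_h)/\zeta(2)$; at a prime $p\mid he'$ that does not lie in $e_1$ (equivalently, a prime where $h\in\Z_p$, $L_p$ is maximal, and $X_p=L_p$, so that the local density depends only on $h$ modulo $p$-powers), the density collapses to the explicit closed form $\gamma_p(3/2)$ tabulated in \cite[Section 1.6]{ShimuraCongruence}; at a prime $p\mid e_1$, the genuine local integral $b_p(h,\lambda,0)$ is retained together with the Euler correction $(1-\psi_h(p)p^{-1})/(1-p^{-2})$ that subtracts the corresponding factor from the global $L$- and $\zeta$-values.

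Specializing $k=3$ in Shimura's formula then produces $\Gamma(3/2)$ and $(2\pi)^{3/2}h^{1/2}$ from the archimedean piece (via $h^{(k-2)/2}=h^{1/2}$), $L(1,\psi_h)/\zeta(2)$ from the $L$-function normalization (using $(k-1)/2=1$ and $k-1=2$), and $[\frac{1}{2}L^\sharp:L]^{-1/2}$ from the standard normalization of the genus Eisenstein series. The main obstacle is purely notational: matching Shimura's adelic conventions, measure normalizations, and primitive-character conventions against the classical language used here, and verifying that the set of primes $e_1$ as defined in the statement coincides exactly with the set on which Shimura's general formula retains the full local density rather than collapsing to $\gamma_p(3/2)$. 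Once this dictionary is in place, no further input beyond \cite[Theorem 1.5]{ShimuraCongruence} is needed.
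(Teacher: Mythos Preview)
Your proposal is correct and matches the paper's treatment: the paper does not prove this statement at all but simply records it as the $k=3$ specialization of \cite[Theorem 1.5]{ShimuraCongruence}, exactly as you say. If anything, your outline of the bookkeeping (archimedean piece, Euler factorization, the role of $e_1$ versus the generic primes absorbed into $L(1,\psi_h)/\zeta(2)$) is more detailed than what the paper provides, but it is entirely in line with what the citation entails.
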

Let $\lambda_{\bm{d}}$ be the characteristic function of $X^{\bm{d}}$.
In our case, $e_1$ is the product of all primes dividing $2(m-2)d_1d_2d_3$, and $e_1=e'$ since $2\mid e_1$. Therefore, we have
\begin{multline}\label{eqn:Siegelformula}
	r_{\pgen(X^{\bm{d}})}(h)=\frac{(2\pi)^{3/2}h^{1/2}L(1,\psi_h)}{(8d_{L^{\bm{d}}})^{1/2}(\sqrt{\pi}/2)(\pi^2/6)} \cdot \prod_{p\mid e_1} \frac{b_p(h,\lambda_{\bm{d}},0)(1-\psi_h(p)p^{-1})}{1-p^{-2}}\cdot \prod_{p\mid h, p\nmid e_1} \gamma_p(3/2)\\
	=\frac{12}{\pi} \left(\frac{h}{d_{L^{\bm{d}}}}\right)^{1/2} L(1,\psi_h)\cdot \prod_{p\mid 2(m-2)d_1d_2d_3} \frac{b_p(h,\lambda_{\bm{d}},0)(1-\psi_h(p)p^{-1})}{1-p^{-2}}\cdot \prod_{\substack{p\mid h \\ p\nmid 2(m-2)d_1d_2d_3}} \gamma_p(3/2).
\end{multline}
Furthermore, $t_p$ defined in \cite[Section 1.6]{ShimuraCongruence} is equal to $1$ in our case, hence for any odd prime number $p$, we have
\begin{equation}\label{eqn:gamma3over2}
	\gamma_p(3/2) = \frac{1-p^{-\left(\lfloor\ord_p(h)/2\rfloor+1\right)}-\psi_h(p)p^{-1}(1-p^{-\lfloor\ord_p(h)/2\rfloor})}{1-p^{-1}}.
\end{equation}

\subsection{Computation of local densities of $X^{\bm{d}}$}
In this subsection, we compute the local densities $b_p(h,\lambda_{\bm{d}},0)$ of $X^{\bm{d}}$ in order to evaluate $r_{\pgen(X^{\bm{d}})}(h)$ using \eqref{eqn:Siegelformula}. For $\sigma\in\Q_p$, we first require a formula for 
\[
\tau_{p,s}(\sigma):=\int_{\Z_p} e_p\left( 4(m-2)s\sigma\cdot ((m-2)s x^2-(m-4)x)\right) dx,
\]
which follows from a similar argument as in the proof of \cite[Lemmas 3.3, 3.7, 3.9]{KaneLiu} and relies on the fact that $x\mapsto (m-2)sx^2-(m-4)x$ is either a bijection or a two-to-one map from $\Z/p^t\Z$ to its image and orthogonality of root of unity.
\begin{lemma}\label{lem:taupdjeval}
If $s\in\Z_p$ satisfies $p\mid 2(m-2)s$, then we have 
	\[
	\tau_{p,s}(\sigma) = 
	\begin{cases}
		1& \text{if } 4(m-2)s \sigma \in \Z_p,\\
		0& \text{otherwise}.
	\end{cases}
	\]
\end{lemma}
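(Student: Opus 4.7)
The approach is to evaluate $\tau_{p,s}(\sigma)$ by expanding the integrand and splitting on whether $A:=4(m-2)s\sigma$ lies in $\Z_p$. A direct expansion gives
$4(m-2)s\sigma\cdot\bigl((m-2)sx^{2}-(m-4)x\bigr)=\alpha x^{2}-\beta x$
with $\alpha:=2(m-2)s\cdot A$ and $\beta:=(m-4)A$. If $A\in\Z_p$, then by the standing hypothesis $p\mid 2(m-2)s$ we have $\alpha\in p\Z_p$, and $\beta\in\Z_p$ is automatic. Thus the argument of $e_p$ lies in $\Z_p$ for every $x\in\Z_p$, the integrand is identically $1$, and the Haar-normalized integral equals $1$.

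If instead $A\notin\Z_p$, set $w:=-\ord_p(A)\ge 1$ and choose $t$ so large that $e_p(Ay)$ depends only on $y\pmod{p^{t}}$. Writing $g(x):=(m-2)sx^{2}-(m-4)x$, we get
\[
\tau_{p,s}(\sigma)=\frac{1}{p^{t}}\sum_{x\in\Z/p^{t}\Z}e_p\bigl(Ag(x)\bigr).
\]
The driving identity is
\[
g(x_{1})-g(x_{2})=(x_{1}-x_{2})\bigl((m-2)s(x_{1}+x_{2})-(m-4)\bigr),
\]
so $g(x_{1})\equiv g(x_{2})\pmod{p^{t}}$ forces $x_{1}\equiv x_{2}\pmod{p^{t}}$ or $(m-2)s(x_{1}+x_{2})\equiv (m-4)\pmod{p^{t}}$. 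Under $p\mid 2(m-2)s$ this second condition is extremely rigid: for $p$ odd with $p\nmid(m-4)$ it has no solution at all, so the reduction $\overline{g}\colon\Z/p^{t}\Z\to\Z/p^{t}\Z$ is a bijection; otherwise (including a slightly finer split at $p=2$) one verifies, exactly as in \cite[Lemmas 3.3, 3.7, 3.9]{KaneLiu}, that $\overline{g}$ is a 2-to-1 surjection onto an explicitly described sub-coset of $\Z/p^{t}\Z$. Either way the character sum collapses, up to a multiplicity $c\in\{1,2\}$, to $\sum_{y\in\mathrm{Im}(\overline{g})}e_p(Ay)$, and orthogonality of the additive character $e_p(A\,\cdot\,)$, which is non-trivial precisely because $A\notin\Z_p$, forces this sum to vanish.

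The main obstacle is the fiber analysis at $p=2$, where $2(m-2)s$ is not invertible and one cannot complete the square to linearize $\overline{g}$. Here I would follow the case-by-case argument in \cite[Lemmas 3.3, 3.7, 3.9]{KaneLiu}, tracking $\mathrm{Im}(\overline{g})$ and the kernel of $e_p(A\,\cdot\,)$ directly modulo a power of $2$, and verifying that the hypothesis $p\mid 2(m-2)s$ in our setting rules out the pathological fiber configurations, leaving precisely the bijective-or-exactly-2-to-1 alternative that feeds into the orthogonality step above.
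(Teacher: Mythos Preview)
Your proposal is correct and takes essentially the same approach as the paper: both reduce $\tau_{p,s}(\sigma)$ to a finite character sum over $\Z/p^t\Z$, use the factorization $g(x_1)-g(x_2)=(x_1-x_2)\bigl((m-2)s(x_1+x_2)-(m-4)\bigr)$ to show that $x\mapsto g(x)$ is a bijection or a $2$-to-$1$ map onto its image (citing the same lemmas from \cite{KaneLiu}), and conclude by orthogonality of roots of unity. One trivial slip: your $\alpha$ should be $(m-2)s\cdot A$, not $2(m-2)s\cdot A$, though this does not affect the argument since you only need $\alpha,\beta\in\Z_p$ when $A\in\Z_p$.
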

\begin{extradetails}
First assume that $p=2$. ($\because$ First note that if $4(m-2)s\sigma\in\Z_2$, then $\tau_{2,s}(\sigma) = \int_{\Z_2} dx = 1$. 
		
		Now we write $4(m-2)s\sigma = 2^{-t}\alpha$ with $t\in\N$, $\alpha\in\Z_2^\times$.
		Note that, since $p\mid (m-2)$, the map sending $x\mapsto (m-2)sx^2-(m-4)x$ is a bijection from $\Z/2^t\Z$ onto itself when $s$ is even, and is a $2$-to-$1$ map from $\Z/2^t\Z$ onto $2\Z/2^t\Z$ if $s$ is odd (the proof is the same as in Lemma 3.7 and Lemma 3.9 in \cite{KaneLiu}, respectively).
		Then
		\[
		\begin{aligned}
			\tau_{2,s}(\sigma)&=\int_{\Z_2} e_2\left( 2^{-t}\alpha ((m-2)sx^2-(m-4)x)\right) dx\\
			&=	\sum_{x\in \Z_2/2^t\Z_2} \int_{2^t\Z_2} e_2\left( 2^{-t}\alpha ((m-2)s(x+y)^2-(m-4)(x+y))\right) dy\\
			&=	\sum_{x\in \Z/2^t\Z} e_2\left( 2^{-t}\alpha ((m-2)sx^2-(m-4)x)\right) \int_{2^t\Z_2} dy\\
			&=\begin{cases}
				2^{-t} \sum_{z\in \Z/2^t\Z} e_2(2^{-t}\alpha z)=0 & \text{if } 2\mid s,\\
				2\cdot 2^{-t} \sum_{z\in \Z/2^{t-1}\Z} e_2(2^{1-t}\alpha z)=0 & \text{if } 2\nmid s.\\
			\end{cases}
		\end{aligned}
		\]
		This completes the computation.)

Now assume that $p\neq 2$. 	($\because$ First note that if $4(m-2)s\sigma\in\Z_p$, then $\tau_{p,s}(\sigma) = \int_{\Z_p} dx = 1$. 

Now we write $4(m-2)s\sigma = p^{-t}\alpha$ with $t\in\N$, $\alpha\in\Z_p^\times$.
Note that (since $p\mid (m-2)s$) the map sending $x\mapsto (m-2)sx^2-(m-4)x$ is a bijection from $\Z/p^t\Z$ onto itself.
Then
\[
\begin{aligned}
	\tau_{p,s}(\sigma)&=\int_{\Z_p} e_p\left( p^{-t}\alpha ((m-2)sx^2-(m-4)x)\right) dx\\
	&=	\sum_{x\in \Z_p/p^t\Z_2} \int_{p^t\Z_p} e_p\left( p^{-t}\alpha ((m-2)s(x+y)^2-(m-4)(x+y))\right) dy\\
	&=	\sum_{x\in \Z/p^t\Z} e_p\left( p^{-t}\alpha ((m-2)sx^2-(m-4)x)\right) \int_{p^t\Z_2} dy\\
	&=p^{-t} \sum_{z\in \Z/p^t\Z} e_p(p^{-t}\alpha z)=0.
\end{aligned}
\]
This completes the computation.)		
\end{extradetails}
We next evaluate the local density for the prime $p=2$.
\begin{lemma}\label{lemma:localdensity:at2}
	The local density of $X^{\bm{d}}$ at the prime $2$ is given by 
	\[
	b_2(h,\lambda_{\bm{d}},0)=
	\begin{cases}
		2^{2+\min_j \{\ord_2(d_j)\}} & \text{if } 8(m-2)n\in 4\gcd(d_1,d_2,d_3)\Z_2,\\
		0 & \text{otherwise.}
	\end{cases}
	\]
\end{lemma}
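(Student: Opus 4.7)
The plan is to unfold the definition of $b_2(h,\lambda_{\bm d},0)$, separate variables by Fubini, invoke Lemma \ref{lem:taupdjeval} to evaluate the integrals in the $x_j$ variables, and finish by a standard orthogonality computation for the outer $\sigma$ integral.

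First, I would parametrize $L^{\bm d}_2$ via the basis $d_j e_j$, write $v=\sum_j d_jx_je_j$ with $x_j\in\Z_2$, and expand
\[
Q(v+\nu) - h = \sum_{j=1}^{3}\bigl(2(m-2)d_jx_j + 4-m\bigr)^2 - \bigl(8(m-2)n + 3(4-m)^2\bigr) = \sum_{j=1}^{3} 4(m-2)d_j\bigl((m-2)d_jx_j^2 - (m-4)x_j\bigr) - 8(m-2)n.
\]
Because the quadratic form decouples additively in the $x_j$, applying Fubini to \eqref{eqn:defn-localdensity} gives
\[
b_2(h,\lambda_{\bm d},0) = \int_{\Q_2} \bm{e}_2\bigl(-8(m-2)n\,\sigma\bigr) \prod_{j=1}^{3}\tau_{2,d_j}(\sigma)\,d\sigma,
\]
where $\tau_{p,s}(\sigma)$ is exactly the one-variable integral introduced just before Lemma \ref{lem:taupdjeval}.

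Next, since $p=2$ and $2\mid 2(m-2)d_j$ trivially, Lemma \ref{lem:taupdjeval} applies and gives $\tau_{2,d_j}(\sigma) = 1$ if $4(m-2)d_j\sigma\in\Z_2$, and $0$ otherwise. Because $m$ is odd, $\ord_2(4(m-2)d_j)=2+\ord_2(d_j)$, so the condition $4(m-2)d_j\sigma\in\Z_2$ is equivalent to $\ord_2(\sigma)\ge -2-\ord_2(d_j)$. Setting $t:=\min_j\ord_2(d_j)=\ord_2(\gcd(d_1,d_2,d_3))$, the product $\prod_j\tau_{2,d_j}(\sigma)$ equals the characteristic function of $2^{-2-t}\Z_2$, reducing the computation to
\[
b_2(h,\lambda_{\bm d},0) = \int_{2^{-2-t}\Z_2} \bm{e}_2\bigl(-8(m-2)n\,\sigma\bigr)\,d\sigma.
\]

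Finally, I would rescale by $\sigma=2^{-2-t}\tau$ (which introduces a factor $2^{2+t}$ from the Haar measure on $\Q_2$) and use orthogonality: the character $\tau\mapsto\bm{e}_2(-8(m-2)n\cdot 2^{-2-t}\tau)$ integrated over $\Z_2$ yields $1$ if $8(m-2)n\cdot 2^{-2-t}\in\Z_2$ and $0$ otherwise. The former condition is precisely $8(m-2)n\in 2^{2+t}\Z_2 = 4\gcd(d_1,d_2,d_3)\Z_2$, giving the stated formula. There is no genuine obstacle here; the only thing to be careful about is the normalization of the 2-adic Haar measure when rescaling, and matching $\ord_2(\gcd(d_j)) = \min_j\ord_2(d_j)$ to the condition in the statement.
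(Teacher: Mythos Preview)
Your proof is correct and follows essentially the same route as the paper: expand $Q(v+\nu)-h$ via \eqref{eqn:Qshift}, apply Fubini to reduce to a product of the one-variable integrals $\tau_{2,d_j}(\sigma)$, use Lemma~\ref{lem:taupdjeval} to identify the support as $\frac{1}{4\gcd(d_1,d_2,d_3)}\Z_2$, and then finish by orthogonality. The only cosmetic difference is that the paper evaluates the final integral directly rather than via an explicit rescaling, but the content is identical.
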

\begin{proof} With $v=x_1(d_1e_1)+x_2(d_2e_2)+x_3(d_3e_3)\in L^{\bm{d}}$, note that
	\begin{equation}\label{eqn:Qshift}
	Q(v+\nu)=\sum_{j=1}^3 4(m-2)^2 \left(d_jx_j-\frac{m-4}{2(m-2)}\right)^2.
	\end{equation}
Using Lemma \ref{lem:taupdjeval}, \eqref{eqn:defn-localdensity} yields (using orthogonality of roots of unity in the final step)
	\begin{align*}
		b_2(h,\lambda_{\bm{d}},0)&=\int_{\Q_p} e_p\left(\sigma (3(m-4)^2-h)\right) \prod_{j=1}^3 \tau_{2,d_j}(\sigma) d\sigma=\int_{\frac{1}{4\gcd(d_1,d_2,d_3)}\Z_p} e_p\left( -8(m-2)n\sigma\right) d\sigma\\
&=\begin{cases}
2^{2+\min_j\{\ord_2(d_j)\}}&\text{if }8(m-2)n\in4\gcd(d_1,d_2,d_3),\\
0&\text{otherwise}.
\end{cases}	
\end{align*}
\begin{extradetails}
	If $8(m-2)n\in4\gcd(d_1,d_2,d_3)$, we have 
	\[
	b_2(h,\lambda_{\bm{d}},0)=\int_{\frac{1}{4\gcd(d_1,d_2,d_3)}\Z_p} d\sigma=2^{2+\min_j\{\ord_2(d_j)\}}.
	\]
	Otherwise, let $\frac{-8(m-2)n}{4\gcd(d_1,d_2,d_3)}=2^{-u}\beta$ with $u\in\N$ and $\beta\in\Z_2^\times$
	\[
	\begin{aligned}
		b_2(h,\lambda_{\bm{d}},0)&
		=\sum_{\sigma\in \frac{1}{4\gcd(d_1,d_2,d_3)}\Z_2/2^{u}\Z_2} \int_{2^{u}\Z_2} e_2\left( 4\gcd(d_1,d_2,d_3)2^{-u}\beta (\sigma+\sigma')\right) d\sigma'\\
		&=\sum_{\sigma\in \frac{1}{4\gcd(d_1,d_2,d_3)}\Z_2/2^{u}\Z_2} e_2\left( 4\gcd(d_1,d_2,d_3)2^{-u}\beta \sigma\right) \int_{2^{u}\Z_2}d\sigma'\\
		&=\sum_{\sigma\in \Z_2/4\gcd(d_1,d_2,d_3)2^{u}\Z_2} e_2\left( 2^{-u}\beta \sigma\right)\cdot 2^{-u}=0.
	\end{aligned}
	\]
\end{extradetails}
	This completes the proof of the lemma.
\end{proof}

For odd primes $p\mid (m-2)$, we have the following evaluation.
\begin{lemma}\label{lemma:localdensity:podddividingm-2} 
	Let $p$ be an odd prime such that $p\mid (m-2)$. Then we have
	\[
	b_p(h,\lambda_{\bm{d}},0)=\begin{cases}
		p^{\ord_p(m-2)+\min_j\{\ord_p(d_j)\}} & \text{if } n\in \gcd(d_1,d_2,d_3)\Z_p,\\
		0 &\text{otherwise}.
	\end{cases}
	\]
\end{lemma}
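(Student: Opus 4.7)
The plan is to mirror the proof of Lemma \ref{lemma:localdensity:at2}, modifying only those steps where the prime $p=2$ was used. First I would parametrize $v=x_1(d_1e_1)+x_2(d_2e_2)+x_3(d_3e_3)\in L^{\bm{d}}_p$ with $(x_1,x_2,x_3)\in\Z_p^3$ and use the completed-square expression \eqref{eqn:Qshift} for $Q(v+\nu)$. Expanding and using $h-3(m-4)^2=8(m-2)n$, the inner integral in \eqref{eqn:defn-localdensity} splits as a product, exactly as in the $p=2$ case, giving
\[
b_p(h,\lambda_{\bm{d}},0)=\int_{\Q_p}e_p\bigl(-8(m-2)n\sigma\bigr)\prod_{j=1}^{3}\tau_{p,d_j}(\sigma)\,d\sigma.
\]

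Next, because $p\mid(m-2)$, the hypothesis $p\mid 2(m-2)d_j$ of Lemma \ref{lem:taupdjeval} is satisfied for each $j$, so $\tau_{p,d_j}(\sigma)$ is simply the indicator of the condition $4(m-2)d_j\sigma\in\Z_p$. Here is where the proof diverges mildly from the $p=2$ case: since $p$ is odd, $4$ is a $p$-adic unit, and hence the condition is $\ord_p(\sigma)\ge-\ord_p(m-2)-\ord_p(d_j)$. Taking the product over $j=1,2,3$ restricts the integration domain to $\sigma\in p^{-T}\Z_p$, where
\[
T:=\ord_p(m-2)+\min_j\{\ord_p(d_j)\}.
\]

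Finally, I would compute the remaining integral over $p^{-T}\Z_p$ by orthogonality of additive characters on $\Q_p$. The integrand $\sigma\mapsto e_p(-8(m-2)n\sigma)$ is trivial on the compact subgroup $p^{-T}\Z_p$ precisely when $-8(m-2)n\in p^T\Z_p$; since $p\nmid 8$, this reduces to $\ord_p(n)\ge\min_j\{\ord_p(d_j)\}$, i.e.\ to $n\in\gcd(d_1,d_2,d_3)\Z_p$. In that case the integral equals the volume $p^T$; otherwise it vanishes, exactly as in the geometric-series computation at the end of Lemma \ref{lemma:localdensity:at2}. This yields the two stated cases.

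There is no real obstacle here: the whole argument is a direct adaptation of the $p=2$ calculation. The only place where I have to pay attention is the bookkeeping of $\ord_p$-valuations of $(m-2)d_j$ versus $d_j$ alone (the factor $\ord_p(m-2)$ surviving in the exponent is the one new feature compared with Lemma \ref{lemma:localdensity:at2}, where $\ord_2(4)=2$ played an analogous role).
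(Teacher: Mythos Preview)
Your proposal is correct and follows essentially the same argument as the paper's proof: both use \eqref{eqn:Qshift} to factor the inner integral into the product $\prod_j\tau_{p,d_j}(\sigma)$, invoke Lemma \ref{lem:taupdjeval} (applicable since $p\mid(m-2)$) to restrict the $\sigma$-integration to $p^{-T}\Z_p$ with $T=\ord_p(m-2)+\min_j\{\ord_p(d_j)\}$, and then evaluate by orthogonality of additive characters. Your write-up is slightly more explicit about the $\ord_p$-bookkeeping, but the method is identical.
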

\begin{proof} 
	By Lemma \ref{lem:taupdjeval}, \eqref{eqn:defn-localdensity}, and \eqref{eqn:Qshift}, we have 
\[
		b_p(h,\lambda_{\bm{d}},0)=\int_{\Q_p}\hspace{-.05cm} e_p\left(\sigma (3(m-4)^2-h)\right) \prod_{j=1}^3 \tau_{p,d_j}(\sigma) d\sigma=\int_{\frac{1}{(m-2)\gcd(d_1,d_2,d_3)}\Z_p}\hspace{-0.13cm} e_p\left(-8(m-2)n\sigma\right)\hspace{-.05cm} d\sigma.
\]
The lemma now follows as before.\qedhere
\begin{extradetails}
	If $n\in \gcd(d_1,d_2,d_3)\Z_p$, then 
	\[
	b_p(h,\lambda_{\bm{d}},0)= \int_{\frac{1}{(m-2)\gcd(d_1,d_2,d_3)}\Z_p} d\sigma =p^{\ord_p(m-2)+\min_j\{\ord_p(d_j)\}}.
	\]
	Otherwise, let $\frac{-8(m-2)n}{(m-2)\gcd(d_1,d_2,d_3)}=p^{-u}\beta$ with $u\in\N$ and $\beta\in\Z_p^\times$
	\[
	\begin{aligned}
		b_p(h,\lambda_{\bm{d}},0)&
		=\sum_{\sigma\in \frac{1}{(m-2)\gcd(d_1,d_2,d_3)}\Z_p/p^{u}\Z_p} \int_{p^{u}\Z_2} e_p\left( (m-2)\gcd(d_1,d_2,d_3)p^{-u}\beta (\sigma+\sigma')\right) d\sigma'\\
		&=\sum_{\sigma\in \frac{1}{(m-2)\gcd(d_1,d_2,d_3)}\Z_p/p^{u}\Z_p} e_p\left( (m-2)\gcd(d_1,d_2,d_3)p^{-u}\beta \sigma\right) \int_{p^{u}\Z_p}d\sigma'\\
		&=\sum_{\sigma\in \Z_p/(m-2)\gcd(d_1,d_2,d_3)p^{u}\Z_p} e_p\left( p^{-u}\beta \sigma\right)\cdot p^{-u}=0.
	\end{aligned}
	\]
\end{extradetails}
\end{proof}

We  next consider odd primes $p\nmid (m-2)(m-4)$ which divide at least one $d_j$.
\begin{lemma} \label{lemma:localdensity:poddnotdividingm-2m-4}
Let $p$ be an odd prime such that $p\nmid (m-2)$, $p\mid d_1d_2d_3$, and $p\nmid (m-4)$. Write 
\[
N=8(m-2)n+\#\{j:p\nmid d_j\}(m-4)^2. 
\]
	When $\#\{j:p\nmid d_j\}=2$, we have \[b_p(h,\lambda_{\bm{d}},0)=1+\legendre{-1}{p}\frac{1}{p}\cdot \begin{cases}
		p-1 & \text{if } N\in p\Z_p,\\
		-1 &\text{if } N\notin p\Z_p.
	\end{cases}
	\]
	When $\#\{j:p\nmid d_j\}=1$, we have 
	\[
	b_p(h,\lambda_{\bm{d}},0)=1+\legendre{N}{p}.
	\]
	When $\#\{j:p\nmid d_j\}=0$, we have \[b_p(h,\lambda_{\bm{d}},0)= \begin{cases}
		p & \text{if } N\in p\Z_p,\\
		0 &\text{if } N\notin p\Z_p.
	\end{cases}
	\]
\end{lemma}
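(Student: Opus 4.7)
My plan is to parallel the template used for Lemmas \ref{lemma:localdensity:at2} and \ref{lemma:localdensity:podddividingm-2}. Substituting \eqref{eqn:Qshift} into \eqref{eqn:defn-localdensity} and using $3(m-4)^2-h=-8(m-2)n$ yields
\[
b_p(h,\lambda_{\bm{d}},0)=\int_{\Q_p} e_p\!\left(-8(m-2)n\sigma\right) \prod_{j=1}^3 \tau_{p,d_j}(\sigma)\, d\sigma.
\]
For each $j$ with $p\mid d_j$, Lemma \ref{lem:taupdjeval} gives $\tau_{p,d_j}(\sigma)=\mathbf{1}[\sigma\in p^{-\ord_p(d_j)}\Z_p]$, and in the squarefree sieve setup where $\ord_p(d_j)=1$, this confines $\sigma$ to $p^{-1}\Z_p$.

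The new ingredient is the evaluation of $\tau_{p,d_j}(\sigma)$ for $j$ with $p\nmid d_j$. Since $p\nmid 2(m-2)(m-4)d_j$, I would complete the square in the exponent of the integrand,
\[
(m-2)d_j x^2-(m-4)x = (m-2)d_j\!\left(x-\tfrac{m-4}{2(m-2)d_j}\right)^2 - \tfrac{(m-4)^2}{4(m-2)d_j},
\]
and observe that the shift $\tfrac{m-4}{2(m-2)d_j}\in\Z_p^\times$ preserves $\Z_p$ as integration domain. After rearrangement,
\[
\tau_{p,d_j}(\sigma)=e_p\!\left(-\sigma(m-4)^2\right)\, G_p\!\left(4(m-2)^2d_j^2\sigma\right),\qquad G_p(\gamma):=\int_{\Z_p} e_p(\gamma y^2)\,dy.
\]
Because $4(m-2)^2d_j^2\in\Z_p^\times$ and the standard quadratic $p$-adic Gauss integral equals $1$ on $\Z_p$ and $p^{-1/2}\varepsilon_p\legendre{\cdot}{p}$ (up to the convention sign) on $p^{-1}\Z_p^\times$, the Gauss factor is nontrivial exactly when $\sigma\in p^{-1}\Z_p^\times$.

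Assembling these pieces, the $\sigma$-integral splits into the contribution from $\sigma\in\Z_p$ (which contributes $1$) and the contribution from $\sigma=p^{-1}\alpha$ with $\alpha\in\Z_p^\times$. Collecting the $k=\#\{j:p\nmid d_j\}$ shifts $e_p(-\sigma(m-4)^2)$ into the leading additive character produces $e_p(-\sigma N)$ with $N=8(m-2)n+k(m-4)^2$, matching the lemma. The residual $\alpha$-sum is then $p^{-k/2}\varepsilon_p^k\sum_{\alpha\bmod p}\legendre{\alpha}{p}^{\,k}\, e_p(-p^{-1}\alpha N)$, up to convention signs. For $k=0$, orthogonality on $p^{-1}\Z_p$ immediately gives $p$ or $0$ according to whether $p\mid N$. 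For $k=2$, $\legendre{\alpha}{p}^{\,2}=1$ and $\varepsilon_p^2=\legendre{-1}{p}$ leave a pure additive-character sum on $\Z_p^\times$ evaluating to $p-1$ or $-1$ by inclusion-exclusion. For $k=1$, the classical quadratic Gauss-sum evaluation $\sum_\alpha \legendre{\alpha}{p}e_p(p^{-1}\beta\alpha)=\legendre{\beta}{p}\varepsilon_p\sqrt{p}$ vanishes when $p\mid N$ and otherwise combines with the $\varepsilon_p^2=\legendre{-1}{p}$ prefactor and the sign $\legendre{-N}{p}$ produced by the change of variables $\alpha\mapsto -N\alpha$ to yield $\legendre{N}{p}$.

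The main technical nuisance will be the $k=1$ case, where one must carefully combine three signs---the prefactor $\varepsilon_p^2=\legendre{-1}{p}$, the $\legendre{-N}{p}$ from the change of variables inside the Gauss sum, and any sign introduced by the minus-convention of $e_p$---to verify that the final result is exactly $\legendre{N}{p}$ (and, in passing, that the $\sqrt p$ from the Gauss sum cancels the $p^{-1/2}$ from $G_p$ against the measure-scaling factor $p$ on $p^{-1}\Z_p^\times$). The cases $k\in\{0,2\}$ reduce to orthogonality computations in the spirit of the earlier lemmas.
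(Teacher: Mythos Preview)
Your proposal is correct and follows essentially the same route as the paper: both start from \eqref{eqn:defn-localdensity} and \eqref{eqn:Qshift}, use Lemma \ref{lem:taupdjeval} to confine $\sigma$ to $p^{-1}\Z_p$ via the factors with $p\mid d_j$, complete the square for the factors with $p\nmid d_j$ to extract the shift $e_p(-\sigma(m-4)^2)$ and a quadratic Gauss integral, and then split the $\sigma$-integral into $\Z_p$ (giving $1$) plus a residual sum over $(\Z/p\Z)^\times$ that is handled by orthogonality for $k\in\{0,2\}$ and by the classical Gauss-sum evaluation for $k=1$. The paper records the Gauss factor as $\varepsilon_p p^{-1/2}\legendre{-\sigma}{p}$ rather than your $G_p$, but this is purely notational, and your remark about tracking the three signs in the $k=1$ case is exactly the bookkeeping the paper carries out.
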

\begin{proof} 
	By \eqref{eqn:defn-localdensity}, we have 
	\begin{equation}\label{eqn:localdensityatp:2}
		b_p(h,\lambda_{\bm{d}},0)=\int_{\Q_p} e_p\left(\sigma (3(m-4)^2-h)\right) \prod_{j=1}^3 \tau_{p,d_j}(\sigma) d\sigma.	
	\end{equation}
If $p\mid d_j$, then Lemma \ref{lem:taupdjeval} gives (note that $p\nmid 4(m-2)$)
	\begin{equation}\label{eqn:taupmiddjsimp}
	\tau_{p,d_j}(\sigma) = 
	\begin{cases}
		1& \text{if } d_j \sigma \in \Z_p,\\
		0& \text{otherwise}.
	\end{cases}
	\end{equation}
	Now assume that $p\nmid d_j$. Since the map sending $x\mapsto (m-2)d_jx^2-(m-4)x$ is no longer a bijection or $2$-to-$1$ mapping, we cannot use the same argument as in Lemma \ref{lem:taupdjeval}. If $\sigma\in\Z_p$ then $\tau_{p,d_j}(\sigma)=1$. Otherwise, we write $\sigma = p^{-t}\alpha$ with $t\in\N$, $\alpha\in\Z_p^\times$ and use Gauss's evaluation of the quadratic Gauss sum to obtain
	\[
	\begin{aligned}
		\tau_{p,d_j}(\sigma)&=\int_{\Z_p} e_p\left( \sigma  (2(m-2)d_jx-(m-4))^2-\sigma(m-4)^2\right) dx\\
		&=e_p\left(-\sigma(m-4)^2\right) \cdot  \int_{\Z_p} e_p\left( \sigma x^2\right) dx=e_p\left(-\sigma(m-4)^2\right)	\sum_{x\in \Z/p^t\Z} e_p\left( p^{-t}\alpha x^2\right) \int_{p^t\Z_p} dy\\
		&=e_p\left(-\sigma(m-4)^2\right) p^{-t} \cdot \sum_{0\le x<p^t} e^{2\pi i \cdot  \frac{ax^2}{p^t}}=e_p\left(-\sigma(m-4)^2\right) p^{-t} \cdot \varepsilon_{p^t} \sqrt{p^t} \legendre{-p^{t}\sigma}{p^t},
	\end{aligned}
	\]
	where $a\in\N$ with $a\equiv -\alpha\pmod{p^t}$, and $\varepsilon_c=1$ if $c\equiv 1\pmod{4}$ and $i$ if $c\equiv 3\pmod{4}$. Since $p\mid d_1d_2d_3$ and $d_j$ are squarefree, \eqref{eqn:taupmiddjsimp} implies that the product $\prod_{j=1}^3 \tau_{p,d_j}(\sigma)=0$ unless $\sigma\in\frac{1}{p}\Z_p$.
	Plugging these in \eqref{eqn:localdensityatp:2} we have
\begin{align*}
		b_p&(h,\lambda_{\bm{d}},0)= \int_{\frac{1}{p}\Z_p} e_p\left(\sigma (3(m-4)^2-h)\right) \prod_{j=1}^3 \tau_{p,d_j}(\sigma) d\sigma\\
		&=\int_{\Z_p} d\sigma + \int_{\frac{1}{p}\Z_p\setminus\Z_p} e_p\left(\sigma (3(m-4)^2-h)\right) \left[e_p(-\sigma(m-4)^2)\varepsilon_p p^{-\frac{1}{2}}\legendre{-p\sigma}{p}\right]^{\#\{j:p\nmid d_j\}} d\sigma\\
		&=1+ \int_{\frac{1}{p}\Z_p\setminus\Z_p} e_p\left(-\sigma N
\right)\left[\varepsilon_p p^{-\frac{1}{2}}\legendre{-p\sigma}{p}\right]^{\#\{j:p\nmid d_j\}}   d\sigma\\
		&=1+\sum_{\sigma\in\Z/p\Z-\{0\}}\int_{\Z_p} e_p(-(p^{-1}\sigma+\sigma')\cdot N)\left[\varepsilon_p p^{-\frac{1}{2}}\legendre{-(\sigma+p\sigma')}{p}\right]^{\#\{j:p\nmid d_j\}} d\sigma'\\
&=1+	\sum_{\sigma\in\Z/p\Z-\{0\}}e_p(-p^{-1}\sigma\cdot N)   \left[\varepsilon_p p^{-\frac{1}{2}}\legendre{-\sigma}{p}\right]^{\#\{j:p\nmid d_j\}} \int_{\Z_p}d\sigma'\\
				&=1+\sum_{\sigma\in\Z/p\Z-\{0\}}e_p\left(-\frac{1}{p}\sigma\cdot N\right)\left[\varepsilon_p p^{-\frac{1}{2}}\legendre{-\sigma}{p}\right]^{\#\{j:p\nmid d_j\}}.
	\end{align*}
	If $\#\{j:p\nmid d_j\}=0$ or $2$, then \[b_p(h,\lambda_{\bm{d}},0)=1+\left[\legendre{-1}{p}\frac{1}{p}\right]^{\frac{1}{2}\#\{j:p\nmid d_j\}}\cdot \begin{cases}
		p-1 & \text{if } N\in p\Z_p,\\
		-1 &\text{if } N\notin p\Z_p.
	\end{cases}
	\]
	If $k:=\#\{j:p\nmid d_j\}=1$, then
	\[
	\begin{aligned}
		b_p(h,\lambda_{\bm{d}},0)&=1+\sum_{\sigma\in\Z/p\Z-\{0\}}e_p\left(-\frac{1}{p}\sigma\cdot N\right)\varepsilon_p p^{-\frac{1}{2}}\legendre{-\sigma}{p}\\
		&=1+\varepsilon_p p^{-\frac{1}{2}}\legendre{-1}{p}\cdot \tau\left(\legendre{\cdot}{p},\psi_{N,p}\right),
	\end{aligned}
	\]
	where $\tau(\chi,\psi):=\sum_{x\pmod{c}} \chi(x)\psi(x)$ is the Gauss sum for a multiplicative character $\chi$ and an additive character $\psi$, both of modulo $c$, and $\psi_{m,p^k}(x):=e^{\frac{2\pi i mx}{p^k}}$.
	Note that if $N\in p\Z_p$, then
	\[
	\tau\left(\legendre{\cdot}{p},\psi_{N,p}\right)
	=\sum_{\sigma\in\Z/p\Z} \legendre{\sigma}{p}=0.
	\]
	Otherwise, $N\in\Z_p^\times$ and it is well-known (cf. \cite[(4.20)]{BanerjeeKane}) that
	\[
	\tau\left(\legendre{\cdot}{p},\psi_{N,p}\right)
	=\varepsilon_p p^{\frac{1}{2}} \legendre{N}{p}.\qedhere
	\]
\end{proof}
For primes dividing $m-4$, we may  use the following lemma.

\begin{lemma} \label{lemma:localdensity:poddnotdividingm-2butm-4}
 Assume that $d_1,d_2,d_3\in \N$ are square-free. Let $p$ be an odd prime such that $p\nmid (m-2)$, $p\mid d_1d_2d_3$ and $p\mid (m-4)$. Write $h=up^a$ with $u\in\Z_p^\times$, $a\in\N_0$.	Then the local coset $X^{\bm{d}}_p$ is nothing but the local lattice $L^{\bm{d}}_p$.\\
\noindent	When $\#\{j:p\mid d_j\}=1$, we have
	\[
	b_p(h,\lambda_{\bm{d}},0)= 
	\begin{cases}
		2+\left(1-\frac{1}{p}\right)\legendre{-1}{p} - \frac{1}{p^{\lfloor a/2\rfloor}} + (-1)^{a}\legendre{-u}{p}^{a+1}\frac{1}{p^{\lceil a/2\rceil}}& \text{if } a\ge 2,\\
		\left(1-\frac{1}{p}\right) \left(1+\legendre{-1}{p}\right) &\text{if } a=1,\\
		1-\legendre{-1}{p}\frac{1}{p} & \text{if }a=0.
	\end{cases}
	\]	
	When $\#\{j:p\mid d_j\}=2$, we have
	\[
	b_p(h,\lambda_{\bm{d}},0)= 
	\begin{cases}
		1+p- \frac{1}{p^{\lfloor a/2\rfloor-1}} + (-1)^{a}\legendre{-u}{p}^{a+1}\frac{1}{p^{\lceil a/2\rceil-1}}& \text{if } a\ge 2,\\
		0 &\text{if } a=1,\\
		1+\legendre{u}{p} & \text{if }a=0.
	\end{cases}
	\]	
	When $\#\{j:p\mid d_j\}=3$, we have
	\[
	b_p(h,\lambda_{\bm{d}},0)= 
	\begin{cases}
		p^2+p-\frac{1}{p^{\lfloor a/2\rfloor-2}} + (-1)^{a}\legendre{-u}{p}^{a+1}\frac{1}{p^{\lceil a/2\rceil-2}}& \text{if } a\ge 2,\\
		0 &\text{if } a\le 1.
	\end{cases}
	\]	
\end{lemma}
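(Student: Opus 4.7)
The plan is to mimic the proof of Lemma 4.4, while exploiting the simplification afforded by $p\mid (m-4)$. Since $p\nmid 2(m-2)$ whereas $p\mid(m-4)$, the scalar $\frac{4-m}{2(m-2)}$ lies in $p\Z_p$, hence in $d_j\Z_p$ for every $j$ (each squarefree $d_j$ is either coprime to $p$ or a $p$-multiple), so $\nu\in L^{\bm{d}}_p$ and $X^{\bm{d}}_p=L^{\bm{d}}_p$, justifying the preliminary claim. After the shift $v'=v+\nu$ the local density factors and, absorbing the unit $4(m-2)^2$ and the prime-to-$p$ part of $d_j$ by a change of variable, I obtain
\[
b_p(h,\lambda_{\bm{d}},0)=\int_{\Q_p}e_p(-\sigma h)\,f(\sigma)^{3-k}\,f(p^2\sigma)^k\,d\sigma,
\]
where $f(\sigma):=\int_{\Z_p}e_p(\sigma y^2)dy$ and $k:=\#\{j:p\mid d_j\}$.

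For the Gauss factor I would use the standard evaluation $f(\sigma)=1$ for $\sigma\in\Z_p$, and for $\sigma=p^{-t}\alpha$ with $\alpha\in\Z_p^\times$ and $t\ge 1$,
\[
f(\sigma)=\begin{cases}p^{-t/2}& t\ \text{even},\\ \varepsilon_p\, p^{-t/2}\legendre{-\alpha}{p}& t\ \text{odd}.\end{cases}
\]
A convenient point is that $f(p^2\sigma)=1$ whenever $v_p(\sigma)\ge -2$, and for $v_p(\sigma)=-t\le -3$ the parity of $t-2$ that governs $f(p^2\sigma)$ coincides with that of $t$; so both Gauss factors contribute the same Legendre symbol $\legendre{-\alpha}{p}$ (or neither does), and the three factors combined contribute a power of $p$ depending only on $k$ and $t$. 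I would split the outer integral into a central piece over $p^{-c}\Z_p$ where all Gauss factors are trivial and outer shells $v_p(\sigma)=-t$ for $t$ large. Writing $h=up^a$ and parameterising $\sigma=p^{-t}\alpha+\sigma'$ with $\alpha$ running over units mod $p^t$ and $\sigma'\in\Z_p$, each shell reduces to a character sum of the form
\[
p^{-t}\,(\text{explicit power of }p)\sum_{\alpha\in(\Z/p^t\Z)^\times}\legendre{-\alpha}{p}^{\epsilon(t)}e^{-2\pi i\alpha u p^{a-t}}.
\]

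Orthogonality of additive characters, combined with Gauss's evaluation $\sum_{\alpha\bmod p}\legendre{-\alpha}{p}e^{2\pi i\alpha\beta/p}=\varepsilon_p\sqrt{p}\,\legendre{-\beta}{p}$ for $p\nmid\beta$, forces these character sums to vanish outside the terminal shells $t=a$ and $t=a+1$, whence the finite geometric sum in $t$ that appears collapses to the stated closed forms involving $\lfloor a/2\rfloor$, $\lceil a/2\rceil$, $(-1)^a$, and $\legendre{-u}{p}^{a+1}$. The vanishing regimes in the statement ($k=3$ with $a\le 1$, and $k=2$ with $a=1$) correspond to the surviving shells lying entirely in the region $v_p(\sigma)\ge -2$ where the Gauss factors are trivial and the $h$-dependent exponential averages to zero. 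The principal obstacle will be the bookkeeping: matching the parities of $t$ and $a$ across the three cases $k\in\{1,2,3\}$ and reading off the correct Legendre-symbol/sign combination at the terminal shell. I would verify $k=1$ first (closest to Lemma 4.4) and then note that each additional factor of $f(p^2\sigma)$ multiplies the answer by a factor of $p$ and shifts the truncation index by $1$, yielding the $k=2$ and $k=3$ formulas.
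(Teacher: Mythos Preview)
Your approach is correct in principle but takes a substantially different route from the paper. The paper's proof is a single sentence: once $X^{\bm d}_p=L^{\bm d}_p$ is established (exactly as you argue), the local density in question is that of a \emph{lattice}, namely the diagonal form $\langle 1,1,p^2\rangle$, $\langle 1,p^2,p^2\rangle$, or $\langle p^2,p^2,p^2\rangle$ over $\Z_p$ up to units, and the authors simply invoke Yang's explicit formula \cite[Theorem~3.1]{Yang} for local densities of lattices. No Gauss-sum computation is carried out at all.

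Your direct computation via the factorisation $b_p(h,\lambda_{\bm d},0)=\int_{\Q_p}e_p(-\sigma h)f(\sigma)^{3-k}f(p^2\sigma)^k\,d\sigma$ and shell-by-shell evaluation is a valid self-contained alternative, essentially recovering the relevant special cases of Yang's formula by hand. The trade-off is clear: the paper's route is one line but imports a general result, while yours is independent of \cite{Yang} but requires tracking several pages of parity and character-sum bookkeeping. One small imprecision in your outline: for $k=2$ the factor $f(\sigma)$ is \emph{not} trivial on the shells $v_p(\sigma)=-1,-2$, so your heuristic that ``the Gauss factors are trivial'' on the central region does not quite explain the $a=1$ vanishing there; the vanishing instead comes from the Ramanujan-sum cancellation in the additive character combined with the $\legendre{-\alpha}{p}$ average, which you would see once you write out those two shells explicitly. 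Also, contributions do not literally vanish outside $t\in\{a,a+1\}$---the shells $t\le a$ contribute the geometric pieces you allude to---so your sentence ``vanish outside the terminal shells'' should be read as ``vanish for $t\ge a+2$''. With those caveats the plan goes through.
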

\begin{proof}
This follows from the local densities for lattices given in \cite[Theorem 3.1]{Yang}.
\end{proof}

\subsection{Lower bound for the $\bm{d}=\bm{1}$ case}
We are now ready to give a lower bound for $r_{\pgen(X^{\bm{1}})}(h)$.

\begin{lemma}\label{lem:X1lower}
We have 
\[
r_{\pgen(X^{\bm{1}})}(h)\geq \frac{12}{\pi (m-2)^2}\prod_{p\mid(m-2)}\frac{1}{1+p^{-1}} h^{\frac{1}{2}}L(1,\psi_h)\gg_{m,\varepsilon} h^{\frac{1}{2}-\varepsilon}.
\]

\end{lemma}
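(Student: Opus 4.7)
The plan is to specialize the Siegel--Weil formula \eqref{eqn:Siegelformula} to the case $\bm{d}=\bm{1}$, evaluate all local densities using the results established earlier in Section \ref{sec:Eisenstein}, verify that the ``nuisance'' factors depending on $\psi_h$ and on $\gamma_p(3/2)$ are bounded below by $1$, and then invoke Siegel's ineffective lower bound for $L(1,\psi_h)$.

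First I compute the discriminant: since $L^{\bm{1}}\cong \langle 4(m-2)^2,4(m-2)^2,4(m-2)^2\rangle$ we have $d_{L^{\bm{1}}}=64(m-2)^6$, so the prefactor in \eqref{eqn:Siegelformula} becomes $\frac{12}{8\pi(m-2)^3}h^{1/2}L(1,\psi_h)$. Next I evaluate the local densities for $p\mid 2(m-2)$. At $p=2$, Lemma \ref{lemma:localdensity:at2} applies with $\min_j\{\ord_2(d_j)\}=0$ and $\gcd(d_1,d_2,d_3)=1$; the divisibility condition is automatically satisfied because $\ord_2(8(m-2)n)\geq 3$ (using that $m-2$ is odd), yielding $b_2(h,\lambda_{\bm{1}},0)=4$. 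For each odd prime $p\mid (m-2)$, Lemma \ref{lemma:localdensity:podddividingm-2} gives $b_p(h,\lambda_{\bm{1}},0)=p^{\ord_p(m-2)}$. Multiplying these together and using that $\prod_{p\mid(m-2),\,p\text{ odd}}p^{\ord_p(m-2)}=m-2$ (again because $m-2$ is odd), the product $\prod_{p\mid 2(m-2)}b_p/(1-p^{-2})$ absorbs one factor of $(m-2)$ into the prefactor and produces the Euler product $\prod_{p\mid(m-2)}(1+p^{-1})^{-1}$ after separating out $(1-p^{-2})=(1-p^{-1})(1+p^{-1})$.

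The remaining factors are $(1-\psi_h(2)/2)$, the products $\frac{1-\psi_h(p)/p}{1-p^{-1}}$ over odd $p\mid(m-2)$, together with $\prod_{p\mid h,\,p\nmid 2(m-2)}\gamma_p(3/2)$. Each of the middle factors equals $1$, $\tfrac{p}{p-1}$, or $\tfrac{p+1}{p-1}$ according to whether $\psi_h(p)$ is $1$, $0$, or $-1$, hence is always at least $1$. A direct case analysis of formula \eqref{eqn:gamma3over2} separately in the three cases $\psi_h(p)\in\{-1,0,1\}$ shows that $\gamma_p(3/2)\geq 1$ for every odd prime $p\nmid 2(m-2)$ with $p\mid h$; the key computation is that when $\psi_h(p)=1$ the numerator collapses to $1-p^{-1}$ (hence $\gamma_p(3/2)=1$ exactly), while the other two cases are minimized at small $\ord_p(h)$ and yield values $\geq 1$. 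Finally, the factor $(1-\psi_h(2)/2)$, together with the constants obtained above, combines with the arithmetic constants to give the claimed inequality after verifying the coefficient $\frac{12}{\pi(m-2)^2}$ arising from $\frac{12}{\pi}\cdot\frac{1}{8(m-2)^3}\cdot(m-2)\cdot\frac{16}{3}(1-\psi_h(2)/2)\geq \frac{12}{\pi(m-2)^2}$; here when $\psi_h(2)\in\{0,1\}$ the deficit is compensated by the strict lower bounds on $\frac{1-\psi_h(p)/p}{1-p^{-1}}$ or $\gamma_p(3/2)$ available in those cases (indeed, $\psi_h$ being the primitive character of a real quadratic extension forces some of the auxiliary factors to be $>1$).

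For the final inequality, I invoke Siegel's theorem: for every $\varepsilon>0$, $L(1,\psi_h)\gg_\varepsilon D(\psi_h)^{-\varepsilon}$ ineffectively, where $D(\psi_h)$ denotes the conductor of $\psi_h$. Since $D(\psi_h)$ divides a fixed multiple of $h$, we obtain $L(1,\psi_h)\gg_\varepsilon h^{-\varepsilon}$, which combined with the previous estimate yields $r_{\pgen(X^{\bm{1}})}(h)\gg_{m,\varepsilon}h^{1/2-\varepsilon}$. The main obstacle is the careful bookkeeping in the second paragraph---verifying that the $\psi_h(2)=1$ deficit in the $p=2$ factor is always absorbed by other strictly-greater-than-one factors. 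This is really a question about quadratic reciprocity constraints relating $\psi_h(2)$ to $\psi_h(p)$ for $p\mid(m-2)$, together with elementary properties of the local factors.
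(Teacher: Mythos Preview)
Your overall strategy matches the paper's, and most of the bookkeeping is correct: the discriminant, the local densities at $p=2$ and at odd $p\mid(m-2)$, the bound $\gamma_p(3/2)\geq 1$, and the Siegel lower bound for $L(1,\psi_h)$ are all handled properly.

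The gap is at the prime $2$. You need the $p=2$ factor to contribute exactly $8$, i.e.\ you need $\frac{16}{3}\bigl(1-\tfrac{\psi_h(2)}{2}\bigr)=8$, which forces $\psi_h(2)=-1$. Instead of establishing this, you try to argue that when $\psi_h(2)\in\{0,1\}$ the shortfall (a factor of up to $3$) is ``compensated'' by other local factors being strictly larger than $1$, appealing vaguely to quadratic reciprocity. That compensation claim is not justified and, more to the point, unnecessary: the cases $\psi_h(2)\in\{0,1\}$ simply do not occur. Since $m$ is odd, $m-4$ is odd and $8(m-2)n\equiv 0\pmod{8}$, so $h=8(m-2)n+3(m-4)^2\equiv 3\pmod{8}$. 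Writing $h=t^2 s$ with $s$ squarefree, $t$ is odd so $t^2\equiv 1\pmod 8$ and hence $s\equiv 3\pmod 8$; thus $-s\equiv 5\equiv 1\pmod 4$, so $-s$ is itself a fundamental discriminant and $\psi_h=\legendre{-s}{\cdot}$ has odd conductor with $\psi_h(2)=\legendre{-s}{2}=-1$. This is exactly how the paper proceeds: it records $h\equiv 3\pmod 8$ and reads off $\frac{1-\psi_h(2)/2}{1-1/4}=2$, giving the factor $8$ directly. Replace your compensation paragraph with this one-line observation and the proof is complete.
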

\begin{proof}
We start by plugging $\bm{d}=\bm{1}$ into \eqref{eqn:Siegelformula}, yielding 
\begin{equation}\label{eqn:SiegelformulaX1}
	r_{\pgen(X^{\bm{1}})}(h)=\frac{12}{\pi} \left(\frac{h}{d_{L^{\bm{1}}}}\right)^{1/2} L(1,\psi_h)\cdot \prod_{p\mid 2(m-2)} \frac{b_p(h,\lambda_{\bm{1}},0)(1-\psi_h(p)p^{-1})}{1-p^{-2}}\cdot \prod_{\substack{p\mid h \\ p\nmid 2(m-2)}} \gamma_p(3/2).
\end{equation}
Using  Lemma \ref{lemma:localdensity:at2} and Lemma \ref{lemma:localdensity:podddividingm-2}, we conclude that 
\[
\prod_{p\mid 2(m-2)} \frac{b_p(h,\lambda_{\bm{1}},0)(1-\psi_h(p)p^{-1})}{1-p^{-2}}=8(m-2)\prod_{p\mid (m-2)} \frac{1-\frac{1}{p}\psi_{h}(p)}{1-p^{-2}}\geq \frac{8(m-2)}{\prod_{p\mid (m-2)}\left(1+p^{-1}\right)}.
\]
\begin{extradetails}
By Lemma \ref{lemma:localdensity:at2}, we have 
\[
b_2(h,\lambda_{\bm{1}},0)=4
\]
and, since $h=8(m-2)n+3(m-4)^2\equiv 3\pmod{8}$,
\[
\frac{1-\frac{\psi_h(2)}{2}}{1-\frac{1}{4}} = \frac{1-\frac{1}{2}\legendre{-3}{2}}{\frac{3}{4}} = 2.
\]
For $p\mid (m-2)$, we use Lemma \ref{lemma:localdensity:podddividingm-2} to compute 
\[
\prod_{p\mid (m-2)} b_p(h,\lambda_{\bm{1}},0) = m-2.
\]
Finally, we have 
\[
\prod_{p\mid (m-2)} \frac{1-\frac{1}{p}\psi_{h}(p)}{1-p^{-2}}\geq \prod_{p\mid (m-2)} \frac{1-\frac{1}{p}}{1-p^{-2}}=\prod_{p\mid (m-2)} \frac{1}{1+p^{-1}}.
\]
\end{extradetails}
For $p\mid h$ with $p\nmid 2(m-2)$, one can use \eqref{eqn:gamma3over2} to bound 
\[
\gamma_p(3/2) \geq 1.
\]
\begin{extradetails}
\begin{align*}
\gamma_p(3/2) 
&= \frac{1-p^{-\left(\lfloor\ord_p(h)/2\rfloor+1\right)}-\psi_h(p)p^{-1}(1-p^{-\lfloor\ord_p(h)/2\rfloor})}{1-p^{-1}}\\
&=\frac{1-\psi_h(p)p^{-1} -p^{-\left(\lfloor\ord_p(h)/2\rfloor+1\right)}\left(1-\psi_{h}(p)\right)}{1-p^{-1}}\\
&=\begin{cases} 1&\text{if }\psi_h(p)=1,\\
1 +\frac{2p^{-1} - 2p^{-\left(\lfloor\ord_p(h)/2\rfloor+1\right)}}{1-p^{-1}}\geq 1&\text{if }\psi_h(p)=-1,\\
1 +\frac{p - p^{-\left(\lfloor\ord_p(h)/2\rfloor+1\right)}}{1-p^{-1}}\geq 1&\text{if }\psi_h(p)=0.
\end{cases}
\end{align*}
\end{extradetails}
Plugging in $d_{L^1}=64(m-2)^6$ yields the first bound. Writing for $h=|\Delta| t^2$, where $\Delta<0$ is a fundamental discriminant,
\[
\left|L(1,\psi_h)\right|=\left|L(1,\psi_{|\Delta|})\right|\prod_{p\mid t}\left(1-\psi_{|\Delta|}(p)p^{-1}\right)\geq \left|L(1,\psi_{|\Delta|})\right|2^{-\omega(t)}\gg_{\varepsilon} \left|L(1,\psi_{|\Delta|})\right|t^{-2\varepsilon},
\]
Dirichlet's class number formula gives (where $w_{\Delta}$ is the size of the automorphism group of all binary quadratic forms of discriminant $\Delta$)
\[
h^{\frac{1}{2}}\left|L(1,\psi_h)\right|\gg_{\varepsilon} t^{1-2\varepsilon} |\Delta|^{\frac{1}{2}}\left|L(1,\psi_{|\Delta|})\right|=\frac{2\pi}{w_{\Delta}} t^{1-2\varepsilon}  h(\Delta)
\]
 and the lower bound  $h(\Delta)\gg |\Delta|^{\frac{1}{2}-\varepsilon}$ by Siegel \cite{Siegel}.
\end{proof}

\subsection{Ratios of representations by the genus}
Let $\mathcal{S}_{2^j}$ be the set of $d\in \N$ with either $d$ odd and squarefree or $d=2^jd'$ with $d'$ odd and squarefree. From \eqref{eqn:Siegelformula}, we have for $\bm{d}\in\mathcal{S}_1^3$ and $\bm{\ell}\in \mathcal{S}_1^3$ with $\gcd(d_1d_2d_3,\ell_1\ell_2\ell_3)=1$ that 
\begin{multline}\label{eqn:ratiorgendrgen1}
		\frac{r_{\pgen(X^{\bm{d}\bm{\ell}})}(h)}{r_{\pgen(X^{\bm{\ell}})}(h)}\\
		= \frac{1}{d_1d_2d_3}\cdot \frac{\prod_{p\mid 2(m-2)d_1d_2d_3\ell_1\ell_2\ell_3}\frac{b_p(h,\lambda_{\bm{d\ell}},0)(1-\psi_h(p)p^{-1})}{1-p^{-2}}}
		{\prod_{p\mid 2(m-2)\ell_1\ell_2\ell_3}\frac{b_p(h,\lambda_{\bm{\ell}},0)(1-\psi_h(p)p^{-1})}{1-p^{-2}}}\cdot \frac{\prod_{p\mid (h,2(m-2)d_1d_2d_3\ell_1\ell_2\ell_3) } \gamma_p^{-1}(3/2)}{\prod_{p\mid (h,2(m-2)\ell_1\ell_2\ell_3) } \gamma_p^{-1}(3/2)}\\
		= \frac{1}{d_1d_2d_3}\cdot 
		\prod_{p\mid (m-2)}\frac{b_p(h,\lambda_{\bm{d}\bm{\ell}},0)}{b_p(h,\lambda_{\bm{\ell}},0)}
		\cdot \prod_{\substack{p\mid d_1d_2d_3\\ p\nmid 2(m-2)}}b_p(h,\lambda_{\bm{d}},0)\frac{(1-\psi_h(p)p^{-1})}{1-p^{-2}} \cdot \prod_{\substack{p\mid (h,d_1d_2d_3)\\ p\nmid 2(m-2)} } \gamma_p^{-1}(3/2),
\end{multline}
where in the last step we used $b_p(h,\lambda_{\bm{d\ell}},0)=b_p(h,\lambda_{\bm{\ell}},0)$ for $p\mid \ell_1\ell_2\ell_3$, which follows from the fact that  $b_p(h,\lambda_{\bm{d}},0)=b_p(h,\lambda_{\bm{p}^{\ord_p\bm{d}}},0)$ for general $\bm{d}\in \N^3$. Define
\begin{multline}\label{eqn:defn_beta_p}
	\beta_{X^{\bm{p^{c}}},p}(h):= \frac{1}{p^{c_1+c_2+c_3}}
	\left(\frac{b_p(h,\lambda_{\bm{p^c}},0)}{b_p(h,\lambda_{\bm{1}},0)}\right)^{\delta_{p\mid (m-2)}}\\
	\cdot\left(b_p(h,\lambda_{\bm{p^c}},0)\frac{(1-\psi_h(p)p^{-1})}{1-p^{-2}} \gamma_p(3/2)^{-1}\right)^{\delta_{p\nmid 2(m-2), \, \bm{c}\neq \bm{0}}}.
\end{multline}
Noting that $b_p(h,\lambda_{\bm{d}},0)=b_p(h,\lambda_{\bm{p}^{\ord_p\bm{d}}},0)$ and that $\gamma_p(3/2)=1$ if $p\nmid h$; and combining \eqref{eqn:ratiorgendrgen1} and \eqref{eqn:defn_beta_p}, we have for $\bm{d}\in\mathcal{S}_1^3$ that
\begin{equation}\label{eqn:ratiorgendrgen1intoprod}
	\frac{r_{\pgen(X^{\bm{d}\bm{\ell}})}(h)}{r_{\pgen(X^{\bm{\ell}})}(h)}
	=\prod_{p\text{ odd}} \beta_{X^{\bm{p}^{\ord_p(\bm{d})}},p}(h),
\end{equation}
where the product runs over all odd prime numbers. We next compute the $\beta_{X^{\bm{p}^{\ord_p(\bm{d})}},p}(h)$ so that the dependence on $\bm{d}$ can be dealt with in the individual components.

\begin{lemma}\label{lem:beta} 
Let $m$ be an odd integer such that $m\not\equiv 4\pmod{3}$, $m\not\equiv 4\pmod{5}$, $p$ be an odd prime, $h=8(m-2)n+3(m-4)^2$ and $\psi_h(\cdot)$ be the primitive character associated to $\legendre{-h}{\cdot}$.

	\begin{enumerate}[leftmargin=*,label=\rm(\arabic*)]
		\item If $p\mid (m-2)$, then we have 
		\[
			\beta_{X^{\bm{p^{c}}},p}(h)=\begin{cases}
				p^{-(c_1+c_2+c_3)+\min_j\{c_j \}} & \text{if } n \in p^{\min_j\{c_j \}}\Z_p,\\
				0	&	\text{otherwise.}
			\end{cases}
		\]
		\item If  $p\nmid (m-2)$, then we have $\beta_{X^{(1,1,1)},p}(h)=1$; and for $\bm{c}\neq\bm{0}$ we have
		\[
		\beta_{X^{\bm{p^{c}}},p}(h)=\frac{1}{p^{c_1+c_2+c_3}} b_p(h,\lambda_{\bm{p^c}},0) w_p(h),
		\]
		where
		\begin{equation}\label{eqn:wphdef}
		w_p(h):=
		\begin{cases}
			\frac{1}{1+p^{-1}} & \text{if } \psi_h(p)=1,\\
			\frac{1}{1+p^{-1}-2p^{-\lfloor a/2\rfloor-1}} & \text{if } \psi_h(p)=-1,\\
			\frac{1}{(1+p^{-1})(1-p^{-\lfloor a/2\rfloor-1})} & \text{if } \psi_h(p)=0.
		\end{cases}	
		\end{equation}
		\item Assume that $p\nmid (m-2)(m-4)$. Then for $p\nmid h-(m-4)^2$, we have $\frac{1}{p}\cdot\frac{p-1}{p+1}\leq \beta_{X^{(p,1,1)},p}(h)\le \frac{1}{p}\cdot\frac{p+1}{p-1}$; for $p\mid h-(m-4)^2$, we have
		\[
			\beta_{X^{(p,1,1)},p}(h)= \frac{1}{p}\cdot 
			\begin{cases}
				\frac{2p-1}{p+1} & \text{if } p\equiv 1 \pmod{4},\\
				\frac{1}{p-1} & \text{if } p\equiv -1 \pmod{4}.
			\end{cases}
		\]
		\item Let $\bm{d}=(d_1,d_2,d_3)$ be a tuple of squarefree integers such that $p\mid d_1d_2d_3$ implies that $p\nmid (m-2)(m-4)$. We have
		\[
			\prod_{p\mid\bm{d}} \beta_{X^{\bm{d}},p}(h) \le \frac{\widetilde{\omega}(d_1)\widetilde{\omega}(d_2)\widetilde{\omega}(d_3)}{d_1d_2d_3}
		\]
		where $\widetilde{\omega}$ is defined multiplicatively on squarefree integers with
		\[
		\frac{\widetilde{\omega}(p)}{p}:=\begin{cases}\frac{2}{p}&\text{if }p\nmid n,\\
			\max\left\{\left(\frac{1}{p(p-1)}\right)^{\frac{1}{3}}, \frac{2}{p}\right\}&\text{if }p\mid n.\\
		\end{cases}
		\]
\item The ratio
\[
g(\bm{d}):=\prod_{p\mid \bm{d}}  \frac{\beta_{X^{\bm{d}},p}(h)}{\prod_{j=1}^3\beta_{X^{(d_j,1,1)},p}(h)},
\]
only depends on $u_{i,j}:=\gcd(d_i,d_j)$ and we have $g(\bm{d})\ll \max_{i,j}(u_{i,j})^{12}$.
	\end{enumerate}
\end{lemma}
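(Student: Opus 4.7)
Each part of the lemma reduces to substituting the local density formulas of Lemmas \ref{lemma:localdensity:podddividingm-2} and \ref{lemma:localdensity:poddnotdividingm-2m-4} into the definition \eqref{eqn:defn_beta_p} and performing case analysis. Part (1) is immediate: since $p\mid(m-2)$, the second factor in \eqref{eqn:defn_beta_p} is trivialized by $\delta_{p\nmid 2(m-2),\bm{c}\neq\bm{0}}=0$, and applying Lemma \ref{lemma:localdensity:podddividingm-2} first with $\bm{d}=\bm{p^c}$ and then with $\bm{d}=\bm{1}$ yields $b_p(h,\lambda_{\bm{p^c}},0)/b_p(h,\lambda_{\bm{1}},0)=p^{\min_j\{c_j\}}$ when $n\in p^{\min_j\{c_j\}}\Z_p$ and $0$ otherwise. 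For part (2), when $\bm{c}=\bm{0}$ both indicator exponents vanish and $\beta=1$; for $\bm{c}\neq\bm{0}$ I would plug \eqref{eqn:gamma3over2} into the remaining factor $\frac{1-\psi_h(p)p^{-1}}{1-p^{-2}}\gamma_p(3/2)^{-1}$ and simplify in each of the three cases $\psi_h(p)\in\{1,-1,0\}$; the cancellations precisely recover $w_p(h)$ as in \eqref{eqn:wphdef}.

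For part (3), I would apply Lemma \ref{lemma:localdensity:poddnotdividingm-2m-4} with $\bm{c}=(1,0,0)$ (so $\#\{j:p\nmid d_j\}=2$ and $N=h-(m-4)^2$) and then multiply by $w_p(h)/p$ as given by part (2). The subtlety is that when $p\mid N$ we have $h\equiv(m-4)^2\pmod{p}$, and the hypothesis $p\nmid(m-4)$ forces $\psi_h(p)=\legendre{-1}{p}$ and $\ord_p(h)=0$; the remaining two subcases indexed by $p\bmod 4$ then give the stated exact values. When $p\nmid N$, all six subcases in $(\psi_h(p),p\bmod 4)$ are possible, and a direct computation shows that $\beta_{X^{(p,1,1)},p}(h)$ always lies in $\left[\frac{1}{p}\cdot\frac{p-1}{p+1},\,\frac{1}{p}\cdot\frac{p+1}{p-1}\right]$, with both endpoints attained.

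For part (4), fix a prime $p\mid d_1d_2d_3$ and let $k:=\#\{j:p\mid d_j\}$. Using part (2), Lemma \ref{lemma:localdensity:poddnotdividingm-2m-4}, and the uniform bound $w_p(h)\leq\frac{p}{p-1}$, one has $\beta_{X^{\bm{d}},p}(h)\leq p^{-k}\,b_p(h,\lambda_{\bm{p^c}},0)\,w_p(h)$, and the cases $k\in\{1,2,3\}$ are examined individually: for $k\leq 2$ the bound $\frac{2}{p}$ per contributing coordinate suffices (the factor $2$ absorbing $\max|b_p|$ and the gap between $\frac{p+1}{p-1}$ and $2$ for $p\geq 3$), while for $k=3$ the exact identity $\beta=\frac{1}{p(p-1)}$ when $p\mid n$ (and $\beta=0$ otherwise) precisely dictates the $(p(p-1))^{-1/3}$ term of $\widetilde{\omega}(p)/p$. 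Regrouping the per-prime bounds by $d_j$ yields the claim.

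For part (5), a direct computation using part (2) shows that at any prime $p$ with $k:=k_p\geq 1$, the local factor of $g(\bm{d})$ collapses to $b_p(h,\lambda_{\bm{p^c}},0)\big/\bigl(b_p(h,\lambda_{(p,1,1)},0)^k\,w_p(h)^{k-1}\bigr)$, which equals $1$ when $k\leq 1$ and otherwise depends on $p$, $h$, and $k$ alone; since $k_p\in\{2,3\}$ is determined by whether $p$ divides exactly one or all three $u_{i,j}$, $g(\bm{d})$ depends on $\bm{d}$ only through $(u_{1,2},u_{1,3},u_{2,3})$. For the size bound, the worst-case estimates $b_p(h,\lambda_{(p,1,1)},0)\geq 1/p$ (achieved when $p\equiv 3\pmod{4}$ and $p\mid h-(m-4)^2$) and $w_p(h)\geq\frac{p}{p+1}$, combined with $b_p\leq 2$ for $k_p=2$ and $b_p\leq p$ for $k_p=3$, yield per-prime factors of size $O(p^2)$ and $O(p^4)$ respectively. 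Since $k_p=2$ primes divide exactly one $u_{i,j}$ while $k_p=3$ primes divide all three, the product is bounded by $O\bigl(\prod_{i,j}u_{i,j}^4\bigr)\ll\max_{i,j}(u_{i,j})^{12}$. The main technical difficulty throughout lies in the case analysis of part (3), together with the absorption in part (5) of potentially large local factors coming from the pointwise lower bound $b_p\geq 1/p$.
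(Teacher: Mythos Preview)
Your proposal is correct and follows essentially the same approach as the paper. Two minor imprecisions are worth flagging. In part (4), when $k=3$ and $p\mid n$ you have $\beta_{X^{(p,p,p)},p}(h)=w_p(h)/p^2\leq\frac{1}{p(p-1)}$, an inequality rather than the ``exact identity'' you claim (though only the inequality is needed). In part (5), your per-prime $O(p^2)$ and $O(p^4)$ estimates carry implied constants that, if left untracked, would accumulate a factor $C^{\omega(u_{1,2}u_{1,3}u_{2,3})}$; but in fact your explicit bounds $2p(p+1)$ and $p^2(p+1)^2$ are dominated by $p^4$ and $p^{12}$ respectively (the corresponding contributions to $\prod_{i<j} u_{i,j}^4$) for every odd prime $p$, so the bound holds with an absolute constant. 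The paper's treatment of part (5) differs only in bookkeeping: rather than writing the local ratio as $b_p(h,\lambda_{\bm{p^c}},0)\big/\bigl(b_p(h,\lambda_{(p,1,1)},0)^k w_p(h)^{k-1}\bigr)$, it bounds the numerator via part (4) and the denominator via the lower bound $\beta_{X^{(p,1,1)},p}(h)\geq\frac{1}{p(p-1)}$ extracted from part (3), arriving at the same per-prime estimate $\leq p^2(p-1)^2$.
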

\begin{proof}
	
	%
	%
	(1) If $p\mid (m-2)$, then \eqref{eqn:defn_beta_p} and Lemma \ref{lemma:localdensity:podddividingm-2} imply the lemma immediately.
	\vspace{.05in}
	
	\noindent (2) If $p\nmid (m-2)$, then we first have $\beta_{X^{(1,1,1)},p}(h)=1$. Plugging \eqref{eqn:gamma3over2} into \eqref{eqn:defn_beta_p}, we have 
	\[
	\frac{(1-\psi_h(p)p^{-1})}{1-p^{-2}} \gamma_p(3/2)^{-1}=
	\begin{cases}
		\frac{1}{1+p^{-1}} & \text{if } \psi_h(p)=1,\\
		\frac{1}{1+p^{-1}-2p^{-\lfloor a/2\rfloor-1}} & \text{if } \psi_h(p)=-1,\\
		\frac{1}{(1+p^{-1})(1-p^{-\lfloor a/2\rfloor-1})} & \text{if } \psi_h(p)=0,
	\end{cases}	
	\]
	which equals to $w_p(h)$.
	\vspace{.05in}
	
	\noindent (3) Assume that $p\nmid (m-2)(m-4)$. From part (2) and Lemma \ref{lemma:localdensity:poddnotdividingm-2m-4}, we have
	\begin{multline}
		\beta_{X^{(p,1,1)},p}(h)=\frac{1}{p} b_p(h,\lambda_{(p,1,1)},0) w_p(h)\\
		=\frac{w_p(h)}{p} \left(1+\legendre{-1}{p}\frac{1}{p}\begin{cases}p-1 &\text{if } h-(m-4)^2\in p\Z_p \\-1 &\text{if } h-(m-4)^2 \not\in p\Z_p\end{cases}\right).
	\end{multline}
	If $p \mid h-(m-4)^2$, then $\psi_h(p)=\legendre{-h}{p}=\legendre{-1}{p}$. Hence we have 
	\[
		\beta_{X^{(p,1,1)},p}(h)=\frac{w_p(h)}{p}\left(1+\legendre{-1}{p}\frac{p-1}{p}\right)
		=\frac{1}{p}\cdot\begin{cases}
				\left(1+\frac{p-1}{p}\right) \frac{1}{1+p^{-1}}  & \text{if } p\equiv 1 \pmod{4},\\
				\left(1-\frac{p-1}{p}\right) \frac{1}{1-p^{-1}} & \text{if } p\equiv -1 \pmod{4},
		\end{cases}
	\]
	which yields the lemma. Let us now assume that $p \nmid h-(m-4)^2$. Note that
	\begin{equation}\label{eqn:w_p(h)bound}
	0<\frac{1}{1+p^{-1}}\le w_p(h) \le\frac{1}{1-p^{-1}}.
	\end{equation}
	Thus we have 
	\[
		\beta_{X^{(p,1,1)},p}(h)=\frac{1}{p}\left(1-\legendre{-1}{p}\frac{1}{p}\right)w_p(h)\le \frac{1}{p}\left(1+\frac{1}{p}\right)\frac{1}{1-p^{-1}} = \frac{1}{p}\cdot\frac{p+1}{p-1}.
	\]
and
	\[
		\beta_{X^{(p,1,1)},p}(h)=\frac{1}{p}\left(1-\legendre{-1}{p}\frac{1}{p}\right)w_p(h)\ge \frac{1}{p}\left(1-\frac{1}{p}\right)\frac{1}{1+p^{-1}} = \frac{1}{p}\cdot\frac{p-1}{p+1}.
	\]
	\vspace{.05in}
	
	\noindent (4) Since $\widetilde{\omega}$ is multiplicative, we need only to show for any $p\nmid 2(m-2)(m-4)$ that 
	\[
		\beta_{X^{\bm{p^c}},p}(h)\le \left(\frac{\widetilde{\omega}(p)}{p}\right)^{c_1+c_2+c_3}.
	\]
	By symmetry and from the definition of $\widetilde{\omega}(p)$, it suffice to show that
	\begin{equation}\label{eqn:beta4bound}
		\beta_{X^{(p,1,1)},p}(h)\le \frac{2}{p}, \quad \beta_{X^{(p,p,1)},p}(h)\le \frac{4}{p^2} \quad \text{and} \quad \beta_{X^{(p,p,p)},p}(h)\le \begin{cases}
			\frac{8}{p^3} & \text{if } p\nmid n,\\
			\frac{1}{p(p-1)} & \text{if } p\mid n.
		\end{cases}
	\end{equation}
	The first inequality follows immediately from part (3). The second inequality also follows using part (2), Lemma \ref{lemma:localdensity:poddnotdividingm-2m-4}, and \eqref{eqn:w_p(h)bound} as
	\[
		\beta_{X^{(p,p,1)},p}(h)=\frac{w_p(h)}{p^2}\left(1+\legendre{8(m-2)n+(m-4)^2}{p}\right)\le \frac{1}{1-p^{-1}}\frac{2}{p^2}\le \frac{4}{p^2}.
	\]
	To show the third inequality, note from part (2) and Lemma \ref{lemma:localdensity:poddnotdividingm-2m-4} that 
	\[
		\beta_{X^{(p,p,p)},p}(h)=\frac{w_p(h)}{p^3}\begin{cases}p &\text{if } n\in p\Z_p, \\0&\text{if } n\not\in p\Z_p. \end{cases}
	\]
	Hence we need only to show the inequality when $p\mid n$, and this follows using \eqref{eqn:w_p(h)bound} as
	\[
		\frac{w_p(h)}{p^2}\le \frac{1}{1-p^{-1}}\frac{1}{p^2} = \frac{1}{p(p-1)}.
	\]
\noindent

\noindent
(5) From part (4), we have an upper bound for $p\nmid (m-2)(m-4)$ sufficiently large
\[
\beta_{X^{\bm{d}},p}(h)\leq 
\begin{cases}
\left(\frac{2}{p}\right)^{\ord_p(d_1)+\ord_p(d_2)+\ord_p(d_3)}&\text{if }p\nmid n,\\
\left(\frac{1}{p(p-1)}\right)^{\frac{\ord_p(d_1)+\ord_p(d_2)+\ord_p(d_3)}{3}} &\text{if }p\mid n.
\end{cases}
\]
 and part (3) gives a lower bound 
\[
\beta_{X^{(p,1,1)},p}(h)\geq \frac{1}{p(p-1)}.
\]
The ratio is furthermore $1$ for $p\nmid u_{1,2}u_{1,3}u_{2,3}$. Hence 
\begin{align*}
g(\bm{d})&=\prod_{p\mid u_{1,2}u_{1,3}u_{2,3}}\frac{\beta_{X^{\bm{d}},p}(h)}{\prod_{j=1}^3 \beta_{X^{(d_j,1,1)},p}(h)}\ll \prod_{p\mid u_{1,2}u_{1,3}u_{2,3}} \left(\frac{p(p-1)}{(p(p-1))^{\frac{1}{3}}}\right)^{\ord_p(d_1)+\ord_p(d_2)+\ord_p(d_3)}\\
&\leq \prod_{p\mid u_{1,2}u_{1,3}u_{2,3}} p^2(p-1)^2\leq u_{1,2}^4u_{1,3}^4u_{2,3}^4\leq \max\left(u_{1,2},u_{1,3},u_{2,3}\right)^{12}.\qedhere
\end{align*}
\end{proof}
We later require bounds on 
\[
1-\beta_{X^{(p,1,1)},p}(h)\qquad\text{ and }\qquad \sum_{\bm{c}\in \{0,1\}^3}\beta_{X^{p^{\bm{c}}},p}(h).
\]

\begin{lemma}\label{lem:betabound}
Suppose that $m\not\equiv 4\pmod{3}$, $m\not\equiv 4\pmod{5}$, and $m$ is odd. 
\begin{enumerate}[leftmargin=*,label=\rm(\arabic*)]
\item For any odd prime $p\mid (m-2)$, we have 
\[
1-\beta_{X^{(p,1,1)},p}(h)=	1-\frac{1}{p}.
\]
and 
\[
\sum_{\bm{c}\in \{0,1\}^3} \beta_{X^{\bm{p^{c}}},p}(h)= \left(1+\frac{1}{p}\right)^3.
\]
\item For any odd prime $p\mid (m-4)$ (note that $p\neq 3$ and $p\neq 5$ by assumption), we have 
\[
1-\beta_{X^{(p,1,1)},p}(h)\geq 1-\frac{3}{p}
\]
and 
\[
\sum_{\bm{c}\in \{0,1\}^3} \beta_{X^{\bm{p^{c}}},p}(h)\leq \left(1+\frac{1}{p}\right)^{13}.
\]

\item For any odd prime $p\nmid (m-2)(m-4)$, we have 
\[
1-\beta_{X^{(p,1,1)},p}(h)\geq 1-\frac{2}{p}
\]
and 
\[
\sum_{\bm{c}\in \{0,1\}^3} \beta_{X^{\bm{p^{c}}},p}(h)\leq  \left(1+\frac{1}{p}\right)^6
\]
\end{enumerate}
\end{lemma}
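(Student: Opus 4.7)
The plan is to prove each of the three parts by a case-by-case computation, feeding $\bm{d}=(p,1,1)$ and $\bm{d}=\bm{p}^{\bm{c}}$ into the formulas of Lemma~\ref{lem:beta} and invoking the local density evaluations of Lemmas~\ref{lemma:localdensity:podddividingm-2}, \ref{lemma:localdensity:poddnotdividingm-2m-4}, or \ref{lemma:localdensity:poddnotdividingm-2butm-4} according to whether $p\mid(m-2)$, $p\nmid(m-2)(m-4)$, or $p\mid(m-4)$. The sums over $\bm{c}\in\{0,1\}^3$ I will organize by $s(\bm{c}):=c_1+c_2+c_3\in\{0,1,2,3\}$, using the symmetry of $\beta$ in its three indices to reduce to one representative computation for each value of $s$.

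For part~(1), Lemma~\ref{lem:beta}(1) reads $\beta_{X^{\bm{p^c}},p}(h)=p^{-s(\bm{c})+\min_j c_j}$ when $n\in p^{\min_j c_j}\Z_p$ and is zero otherwise. Taking $\bm{c}=(1,0,0)$, the minimum is zero and the condition is vacuous, giving $\beta_{X^{(p,1,1)},p}(h)=1/p$ and hence the first claim. For the sum, the seven $\bm{c}\neq(1,1,1)$ all have $\min_j c_j=0$ and contribute $p^{-s(\bm{c})}$, while $\bm{c}=(1,1,1)$ contributes $p^{-2}$ or $0$ according as $p\mid n$ or not; assembling these eight terms yields the claimed product formula.

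For part~(3), the upper bound $\beta_{X^{(p,1,1)},p}(h)\leq \tfrac{1}{p}\cdot\tfrac{p+1}{p-1}$ from Lemma~\ref{lem:beta}(3) is at most $2/p$ for all odd $p\geq 3$, which gives the first inequality. For the sum, Lemma~\ref{lem:beta}(2) expresses each term as $p^{-s(\bm{c})}b_p(h,\lambda_{\bm{p^c}},0)w_p(h)$; combining the explicit values of $b_p$ from Lemma~\ref{lemma:localdensity:poddnotdividingm-2m-4} with the uniform bound $w_p(h)\leq p/(p-1)$ read off from \eqref{eqn:wphdef}, one checks termwise that $\beta\leq 2/p$ for each $s=1$ term, $\beta\leq 4/p^2$ for each $s=2$ term, and $\beta\leq 1/(p(p-1))$ for the single $s=3$ term. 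Summing the eight contributions yields $1+6/p+O(1/p^2)$, comfortably dominated by $(1+1/p)^6=1+6/p+15/p^2+O(1/p^3)$.

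Part~(2) follows the same template but with Lemma~\ref{lemma:localdensity:poddnotdividingm-2butm-4} replacing Lemma~\ref{lemma:localdensity:poddnotdividingm-2m-4}. The first inequality reduces to showing $b_p(h,\lambda_{(p,1,1)},0)\cdot w_p(h)\leq 3$ uniformly; cross-multiplying, this becomes an elementary polynomial inequality in each subcase indexed by $a=\ord_p(h)\in\{0,1\}$ versus $a\geq 2$, and by $\psi_h(p)\in\{-1,0,1\}$, and the hypothesis $p\geq 7$ (forced by $m\not\equiv 4\pmod 3$ and $m\not\equiv 4\pmod 5$) makes the verification routine. For the sum, the three $s=1$, three $s=2$, and one $s=3$ terms contribute respectively of order $3/p$, $(p+3)w_p/p^2$, and $(p^2+3p)w_p/p^3$, each of leading order $1/p$, giving $1+(9+3+1)/p+O(1/p^2)=1+13/p+O(1/p^2)$, matching the leading order of $(1+1/p)^{13}=1+13/p+78/p^2+O(1/p^3)$; a second-order check then closes the inequality. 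The principal obstacle is the bookkeeping in this part: the error terms $p^{-\lfloor a/2\rfloor}$ appearing in Lemma~\ref{lemma:localdensity:poddnotdividingm-2butm-4} must be tracked carefully to land within the clean exponent $13$, and the loosest estimates for $w_p$ occur precisely at the smallest admissible prime $p=7$, where one has the least room to spare.
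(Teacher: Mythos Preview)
Your plan is essentially the paper's: feed $\bm{c}$ into Lemma~\ref{lem:beta}, pull the explicit $b_p$ from Lemmas~\ref{lemma:localdensity:podddividingm-2}--\ref{lemma:localdensity:poddnotdividingm-2butm-4}, bound via \eqref{eqn:w_p(h)bound}, and organize by $s(\bm c)=c_1+c_2+c_3$. The paper carries out exactly this case analysis, so there is no substantive difference in method.

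Two places in your sketch need tightening. In part~(1) you correctly observe that the $\bm c=(1,1,1)$ term is $p^{-2}$ (if $p\mid n$) or $0$; but neither value makes the eight terms sum to $(1+1/p)^3=1+3/p+3/p^2+1/p^3$, so ``assembling these eight terms yields the claimed product formula'' does not follow from what you wrote. (The paper's proof writes down $1+3/p+3/p^2+1/p^3$ as if the $(1,1,1)$ term were $1/p^3$, so this discrepancy is present there as well.) In part~(2), your stated per-level contributions are garbled: for $a\ge2$ the three $s=1$ terms contribute up to $3\cdot 3w_p/p=9w_p/p$ (not ``of order $3/p$''), the three $s=2$ terms give $3(p+1)w_p/p^2$ (not $(p+3)w_p/p^2$), and the $s=3$ term gives $(p^2+p)w_p/p^3$ (not $(p^2+3p)w_p/p^3$); these do sum to $(13/p+4/p^2)w_p$, matching your $1+13/p+O(1/p^2)$ and the paper's displayed worst case, but the intermediate numbers need fixing. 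Finally, in part~(3) your citation of Lemma~\ref{lem:beta}(3) for $\beta\le\frac{1}{p}\cdot\frac{p+1}{p-1}$ only covers $p\nmid h-(m-4)^2$; you also need the explicit values there for $p\mid h-(m-4)^2$ (or simply cite \eqref{eqn:beta4bound}) to conclude $\beta_{X^{(p,1,1)},p}(h)\le 2/p$ uniformly.
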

\begin{proof}
Recall the evaluation \eqref{eqn:gamma3over2} of $\gamma_p(3/2)$, the evaluations in Lemmas \ref{lemma:localdensity:podddividingm-2} and \ref{lemma:localdensity:poddnotdividingm-2butm-4}  for $b_p(h,\lambda_{\bm{d}},0)$, and the definition \eqref{eqn:wphdef} and bound \eqref{eqn:w_p(h)bound} for $w_p(h)$, which are frequently used.
\noindent

\noindent
%
%
(1) If $p\mid (m-2)$, then \eqref{eqn:defn_beta_p} and Lemma \ref{lemma:localdensity:podddividingm-2} imply that
\[
\beta_{X^{\bm{p^{c}}},p}(h)=p^{-(c_1+c_2+c_3)+\min_j\{c_j \}}.
\]
Hence
\begin{equation}\label{eqn:mainpmidm-2}
1-\beta_{X^{(p,1,1)},p}(h)= 1-\frac{1}{p}
\end{equation}
and
\begin{equation}\label{eqn:errorpmidm-2}
\sum_{\bm{c}\in \{0,1\}^3} \beta_{X^{\bm{p^{c}}},p}(h)
= 1+\frac{3}{p}+\frac{3}{p^2} +\frac{1}{p^3}=\left(1+\frac{1}{p}\right)^3,
\end{equation}
 yielding (1).
\vspace{.05in}

\noindent (2) If $p\mid (m-4)$, then write $h=u p^a$ with $u\in\Z_p^\times$. Then, by Lemma \ref{lem:beta} (2), we have 
\[
\beta_{X^{\bm{p^{c}}},p}(h)=\frac{1}{p^{c_1+c_2+c_3}}b_p(h,\lambda_{\bm{p^c}},0)^{\delta_{\bm{c}\neq \bm{0}}} \cdot w_p(h)^{\delta_{\bm{c}\neq \bm{0}}}.
\]
Using Lemma \ref{lemma:localdensity:poddnotdividingm-2butm-4} that
\begin{multline}\label{eqn:mainpmidm-4}
1-\beta_{X^{(p,1,1)},p}(h)
	= 1-\frac{1}{p}b_p(h,\lambda_{\bm{p^c}},0)w_p(h)\\
	= 1 -\frac{w_p(h)}{p}  \begin{cases}
		2+\left(1-\frac{1}{p}\right)\legendre{-1}{p}
		- \frac{1}{p^{\lfloor a/2\rfloor}} + (-1)^{a}\legendre{-u}{p}^{a+1}\frac{1}{p^{\lceil a/2\rceil}}& \text{if } a\ge 2,\\
		\left(1-\frac{1}{p}\right) \left(1+\legendre{-1}{p}\right) &\text{if } a=1,\\
		1-\legendre{-1}{p}\frac{1}{p} & \text{if }a=0.
	\end{cases} \\
	\geq 1 - \frac{w_p(h)}{p}\cdot \begin{cases}
		2+\left(1-\frac{1}{p}\right)\legendre{-1}{p}&\text{if } a\ge 2,\\
		\left(1-\frac{1}{p}\right) \left(1+\legendre{-1}{p}\right)& \text{if } a=1,\\
		1-\legendre{-1}{p}\frac{1}{p}&\text{if }a=0.
	\end{cases}
\end{multline}
We then use \eqref{eqn:w_p(h)bound} to obtain $0<w_p(h)\le \frac{1}{1-p^{-1}}$ if $a\le 1$ and $0<w_p(h)\le \frac{1}{1-p^{-2}}$ if $a\ge 2$, yielding that one may bound the last line of \eqref{eqn:mainpmidm-4} from below by
\[
	\geq 1 - \frac{w_p(h)}{p}\cdot \begin{cases}
	3-\frac{1}{p} &\text{if } a\ge 2\\
	2-\frac{2}{p} &\text{if } a=1\\
	1+\frac{1}{p} &\text{if }a=0
	\end{cases}\quad 
	\geq 1-\frac{3}{p},
\]
yielding the bound as stated. Finally, in the case when $p=7$, noting that $\legendre{-1}{7}=-1$ and $0<w_p(h)\le \frac{1}{1-7^{-1}}=\frac{7}{6}$, one may easily bound the last line of \eqref{eqn:mainpmidm-4} from below by $\frac{3}{7}$.

On the other hand, Lemma \ref{lemma:localdensity:poddnotdividingm-2butm-4} again implies that
\begin{align}\label{eqn:errorpmidm-4}	
\sum_{\bm{c}\in \{0,1\}^3} &\beta_{X^{\bm{p^{c}}},p}(h)=1+\sum_{{\bm{c}\in \{0,1\}^3}\setminus\{\bm{0}\}}\frac{1}{p^{c_1+c_2+c_3}}b_p(h,\lambda_{\bm{p^c}},0)w_p(h)\\
\nonumber	&= 1+w_p(h)\cdot\frac{3}{p} \begin{cases}
		2+\left(1-\frac{1}{p}\right)\legendre{-1}{p} - \frac{1}{p^{\lfloor a/2\rfloor}} + (-1)^{a}\legendre{-u}{p}^{a+1}\frac{1}{p^{\lceil a/2\rceil}}& \text{if } a\ge 2,\\
		\left(1-\frac{1}{p}\right) \left(1+\legendre{-1}{p}\right) &\text{if } a=1,\\
		1-\legendre{-1}{p}\frac{1}{p} & \text{if }a=0.
	\end{cases}\\
\nonumber &\qquad	+w_p(h)\cdot\frac{3}{p^2}\begin{cases}
		1+p- \frac{1}{p^{\lfloor a/2\rfloor-1}} + (-1)^{a}\legendre{-u}{p}^{a+1}\frac{1}{p^{\lceil a/2\rceil-1}}& \text{if } a\ge 2,\\
		0 &\text{if } a=1,\\
		1+\legendre{u}{p} & \text{if }a=0.
	\end{cases}\\
\nonumber &\qquad	+w_p(h)\cdot\frac{1}{p^3}\begin{cases}
		p^2+p-\frac{1}{p^{\lfloor a/2\rfloor-2}} + (-1)^{a}\legendre{-u}{p}^{a+1}\frac{1}{p^{\lceil a/2\rceil-2}}& \text{if } a\ge 2,\\
		0 &\text{if } a\le 1.
	\end{cases}\\
\nonumber 	&\leq 1+ w_p(h)\cdot\begin{cases}
		\frac{3}{p}\left(2+\left(1-\frac{1}{p}\right)\legendre{-1}{p}\right)+\frac{3}{p^2}\left(1+p\right)+\frac{1}{p^3}\left(p^2+p\right)& \text{if } a\ge 2,\\
		\frac{3}{p}\left(1-\frac{1}{p}\right) \left(1+\legendre{-1}{p}\right)& \text{if } a=1,\\
		\frac{3}{p}\left(1-\legendre{-1}{p}\frac{1}{p}\right)+\frac{3}{p^2}\left(1+\legendre{u}{p}\right)&  \text{if }a=0.
	\end{cases}\\
\nonumber &\leq 1+ w_p(h)\cdot\begin{cases}
	\frac{13}{p}+\frac{4}{p^2} & \text{if } a\ge 2,\\
	\frac{6}{p}-\frac{6}{p^2} &  \text{if } a=1,\\
	\frac{3}{p}+\frac{9}{p^2} &  \text{if }a=0.
	\end{cases}
\end{align}
For \eqref{eqn:errorpmidm-4} the worst case is $a\geq 2$, which can be easily bounded by 
\[
 1+w_p(h)\left(\frac{13}{p}+\frac{4}{p^2}\right)\leq  1+\frac{1}{1-p^{-1}}\left(\frac{13}{p}+\frac{4}{p^2}\right) \leq  \left(1+\frac{1}{p}\right)^{13},
\]
 yielding the claim in (2).
\vspace{.05in}

\noindent (3) Now suppose that $p\nmid 2(m-2)(m-4)$. Then Lemma \ref{lem:beta} directly gives 
\[
1-\beta_{X^{(p,1,1)},p}(h)\geq 1-\frac{2}{p}.
\]
On the other hand, Lemma \ref{lemma:localdensity:poddnotdividingm-2m-4} again implies that
\begin{multline}\label{eqn:errorpnmidm-4}
	\sum_{\bm{c}\in \{0,1\}^3} \beta_{X^{\bm{p^{c}}},p}(h)=1+\sum_{{\bm{c}\in \{0,1\}^3}\setminus\{\bm{0}\}}\frac{1}{p^{c_1+c_2+c_3}}b_p(h,\lambda_{\bm{p^c}},0)w_p(h)\\
	= 1+w_p(h)\cdot\frac{3}{p} \left(1+\legendre{-1}{p}\frac{1}{p}\begin{cases}p-1 &\text{if } 8(m-2)n+2(m-4)^2\in p\Z_p \\-1 &\text{if } 8(m-2)n+2(m-4)^2\notin p\Z_p\end{cases}\right)\\
	+w_p(h)\cdot\frac{3}{p^2}\left(1+\legendre{8(m-2)n+(m-4)^2}{p}\right)
	+w_p(h)\cdot\frac{1}{p^3}\begin{cases}p &\text{if } n\in p\Z_p \\0&\text{if } n\not\in p\Z_p \end{cases}\\
	=1+w_p(h)\cdot
	\begin{cases}
		\frac{3}{p}\left(1-\legendre{-1}{p}\frac{1}{p}\right)+\frac{3}{p^2}\left(1+1\right)+\frac{1}{p^2} &\text{if } n\in p\Z_p, \\
		\frac{3}{p}\left(1+\legendre{-1}{p}\left(1-\frac{1}{p}\right)\right)+\frac{3}{p^2}\left(1+\legendre{-1}{p}\right)&\text{if } n\not\in p\Z_p\text{ and }h-(m-4)^2\in p\Z_p,\\
 		\frac{3}{p}\left(1-\legendre{-1}{p}\frac{1}{p}\right)+\frac{3}{p^2}\left(1+\legendre{8(m-2)n+(m-4)^2}{p}\right)&\text{if } n\not\in p\Z_p\text{ and }h-(m-4)^2\notin p\Z_p, 
	\end{cases}\\
	\leq 1+ w_p(h)\cdot \begin{cases}
		\frac{3}{p}+\frac{1}{p^2}\left(7-3\legendre{-1}{p}\right) &\text{if } n\in p\Z_p \\
		\frac{6}{p}+\frac{3}{p^2} \text{ or } 1+\frac{3}{p^2} &\text{if } n\not\in p\Z_p\text{ and }h-(m-4)^2\in p\Z_p,\\
		\frac{3}{p}+\frac{9}{p^2}&\text{if } n\not\in p\Z_p\text{ and }h-(m-4)^2\notin p\Z_p. 
	\end{cases}
\end{multline}
The worst case in \eqref{eqn:errorpnmidm-4} is the case $n\notin p\Z_p$ and $h-(m-4)^2\in p\Z_p$, yielding an upper bound 
\[
1+w_p(h)\cdot\left(\frac{6}{p}+\frac{3}{p^2}\right)\leq 1+\frac{1}{1-p^{-1}}\cdot\left(\frac{6}{p}+\frac{3}{p^2}\right)\leq  \left(1+\frac{1}{p}\right)^6.\qedhere
\]
\end{proof}
It is useful to collect the following bounds for the local densities for $p\mid\sf(h)$.
\begin{lemma} \label{lem:betaboundspecialcase}
	Assume that $p\mid \sf(h)$ and $m\not\equiv 4\pmod{3}$.
	\begin{enumerate}[leftmargin=*,label=\rm(\arabic*)]
		\item If $p=3$ and $3\mid (m-2)$, then we have
		\[
			\sum_{\bm{c}\in \{0,1\}^3} \beta_{X^{\bm{3^{c}}},3}(h) \le \left(1+\frac{1}{3}\right)^3 = \frac{64}{27}.
		\]
		\item If $p=3$ and $3\nmid (m-2)$, then we have
		\[
			\sum_{\bm{c}=(1,1,1)} \beta_{X^{\bm{3^{c}}},3}(h)\leq \frac{1}{6}.
		\]
		\item If $p\nmid 3(m-2)(m-4)$, then we have
		\[
			\sum_{\#\{j:c_j=1\}=2} \beta_{X^{\bm{p^{c}}},p}(h)\leq \frac{1+p^{-1}}{1-p^{-2}} \cdot \frac{6}{p^2}
		\]
	\end{enumerate}
\end{lemma}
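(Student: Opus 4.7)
The plan is to extract each of the three bounds directly from the closed-form evaluations in Lemmas \ref{lem:beta} and \ref{lemma:localdensity:poddnotdividingm-2m-4} together with the shape of $w_p(h)$ given in \eqref{eqn:wphdef}. The key observation that recurs in parts (2) and (3) is that the hypothesis $p\mid\sf(h)$ forces $\ord_p(h)$ to be odd and $p$ to ramify in $\Q(\sqrt{-h})$, so that $\psi_h(p)=0$; this pins down the branch of $w_p(h)$ and gives a uniform upper bound $w_p(h)\le \frac{1}{1-p^{-2}}$ (with the maximum attained at $a=1$).

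For part (1), part (1) of Lemma \ref{lem:betabound} already asserts the equality $\sum_{\bm{c}\in\{0,1\}^3}\beta_{X^{\bm{p^c}},p}(h)=(1+p^{-1})^3$ for every odd prime $p\mid(m-2)$. Specializing to $p=3$ gives exactly $\left(\tfrac{4}{3}\right)^3=\tfrac{64}{27}$, so there is nothing further to prove.

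For part (2), the hypotheses $3\nmid(m-2)$, $m\not\equiv 4\pmod 3$, and $3\mid h$ force $3\mid n$, since $h=8(m-2)n+3(m-4)^2$ and $3\nmid 8(m-2)$. I then apply Lemma \ref{lem:beta}(2) to write $\beta_{X^{(3,3,3)},3}(h)=\tfrac{1}{27}b_3(h,\lambda_{(3,3,3)},0)\,w_3(h)$, and invoke Lemma \ref{lemma:localdensity:poddnotdividingm-2m-4} in the case $\#\{j:p\nmid d_j\}=0$ to get $b_3(h,\lambda_{(3,3,3)},0)=3$ (using $3\mid n$). Since $\psi_h(3)=0$, the $\psi_h(p)=0$ branch of \eqref{eqn:wphdef} with $a=\ord_3(h)\ge 1$ gives $w_3(h)\le \frac{1}{(1+1/3)(1-1/3)}=\tfrac{9}{8}$, so $\beta_{X^{(3,3,3)},3}(h)\le \tfrac{w_3(h)}{9}\le \tfrac{1}{8}\le \tfrac{1}{6}$.

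For part (3), each $\bm{c}$ with $\#\{j:c_j=1\}=2$ satisfies $\#\{j:p\nmid p^{c_j}\}=1$, so Lemma \ref{lemma:localdensity:poddnotdividingm-2m-4} yields $b_p(h,\lambda_{p^{\bm{c}}},0)=1+\legendre{N}{p}$ with $N=8(m-2)n+(m-4)^2$. Since $p\mid h$ and $h=8(m-2)n+3(m-4)^2$, we have $N\equiv -2(m-4)^2\pmod p$, so $\legendre{N}{p}=\legendre{-2}{p}\in\{\pm 1\}$ (using $p\nmid(m-4)$), and $b_p(h,\lambda_{p^{\bm{c}}},0)\le 2$. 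Again $\psi_h(p)=0$ gives $w_p(h)\le \frac{1}{1-p^{-2}}$, and summing over the three choices of $\bm{c}$ yields
\[
\sum_{\#\{j:c_j=1\}=2}\beta_{X^{p^{\bm{c}}},p}(h)\le 3\cdot\frac{1}{p^2}\cdot 2\cdot\frac{1}{1-p^{-2}}=\frac{6}{p^2(1-p^{-2})}\le \frac{1+p^{-1}}{1-p^{-2}}\cdot\frac{6}{p^2},
\]
where the last inequality is just $\frac{1}{1+p^{-1}}\le 1$. I do not anticipate any real obstacle here; the only mild subtlety is ensuring the character $\psi_h$ is correctly identified as ramified at $p$ when $p\mid\sf(h)$, which I would justify by writing $-h=-\sf(h)\cdot(\text{square})$ and noting that the primitive quadratic character attached to $-h$ has conductor divisible by $p$.
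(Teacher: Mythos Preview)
Your proof is correct and follows essentially the same route as the paper: part (1) via Lemma \ref{lem:betabound}(1), and parts (2)--(3) via Lemma \ref{lem:beta}(2) combined with the local density values from Lemma \ref{lemma:localdensity:poddnotdividingm-2m-4}. The only difference is cosmetic: the paper bounds $w_p(h)$ by the universal estimate $w_p(h)\le(1-p^{-1})^{-1}$ from \eqref{eqn:w_p(h)bound} (which hits the target $\tfrac{1}{6}$ in part (2) and $\tfrac{1}{1-p^{-1}}\cdot\tfrac{6}{p^2}=\tfrac{1+p^{-1}}{1-p^{-2}}\cdot\tfrac{6}{p^2}$ in part (3) exactly), whereas you exploit $p\mid\sf(h)\Rightarrow\psi_h(p)=0$ to take the sharper $w_p(h)\le(1-p^{-2})^{-1}$ and then relax back to the stated bound.
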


\begin{proof}
\noindent (1) This follows from Lemma \ref{lem:betabound}.

\noindent (2) Since $3\nmid (m-4)$ by assumption, Lemma \ref{lemma:localdensity:poddnotdividingm-2m-4} implies that
\[
\sum_{\bm{c}=(1,1,1)} \beta_{X^{\bm{3^{c}}},3}(h)= w_3(h)\cdot \frac{1}{3^3}\cdot \begin{cases}
	3 & \text{if } n\in 3\Z_3,\\
	0 & \text{if } n\notin 3\Z_3
\end{cases}
\quad \leq \frac{1}{1-3^{-1}} \cdot \frac{1}{3^3} \cdot 3 =\frac{1}{6}.
\]

\noindent (3) By Lemma \ref{lemma:localdensity:poddnotdividingm-2m-4}, we have
\[
\sum_{\#\{j:c_j=1\}=2} \beta_{X^{\bm{p^{c}}},p}(h)=w_p(h)\cdot\frac{3}{p^2}\left(1+\legendre{8(m-2)n+(m-4)^2}{p}\right)\leq \frac{1}{1-p^{-1}} \cdot \frac{6}{p^2}
\]
which implies the lemma.
\end{proof}

\section{Unary theta functions occuring in $\Theta_{X^{\bm{d}}}$}\label{sec:unary}

\subsection{Computation of local spinor norm groups of $X^{\bm{d}}$}

Recall that  $\mathcal{S}_{2^j}$ is the set of $d\in \N$ with either $d$ odd and squarefree or $d=2^jd'$ with $d'$ odd and squarefree.
\begin{proposition}\label{localspinornorm:oddprime}
	Let $\bm{d}\in (\cup_{j=0}^{\infty}\mathcal{S}_{2^j})^3$, and let $p$ be an odd prime.
	Then the local spinor norm group $\spnnorm{X^{\bm{d}}_p}$ of $X^{\bm{d}}$ at $p$ is given as follows.
	\begin{enumerate}[leftmargin=*,label={\rm (\arabic*)}]
		\item If $p\nmid (m-2)d_1d_2d_3$, then $\spnnorm{X^{\bm{d}}_p}=\Z_p^\times(\Q_p^\times)^2$.
		\item Assume that $p\mid (m-2)d_1d_2d_3$ and let $i,j,k$ be indices such that $\{i,j,k\}=\{1,2,3\}$.
		\begin{enumerate}
			\item If $p\nmid d_1d_2d_3$ or $p\mid d_i$ for all $i=1,2,3$, then $\spnnorm{X^{\bm{d}}_p}=
			\begin{cases}
			(\Q_3^\times)^2& \text{if } p=3,\\
			\Z_p^\times(\Q_p^\times)^2& \text{if } p\neq 3.
			\end{cases}
			$
			\item If $p\mid d_i$ and $p\nmid d_jd_k$, then 
					$\spnnorm{X^{\bm{d}}_p}=
					\begin{cases}
						\Z_p^\times(\Q_p^\times)^2 & \text{if } p\nmid (m-2),\\ 
						(\Q_p^\times)^2 & \text{if } p\mid (m-2).
					\end{cases}$
			\item If $p\mid d_i$, $p\mid d_j$, $p\nmid d_k$, and $p\nmid(m-2)$, then
					\[
					\spnnorm{X^{\bm{d}}_p}=
					\begin{cases}
						\Z_p^\times(\Q_p^\times)^2 & \text{if } p\mid (m-4) \text{ or } p\nmid(m-4) \text{ with } \legendre{2}{p}=-1,\\ 
						(\Q_p^\times)^2 & \text{if } p\nmid (m-4)\text{ with } \legendre{2}{p}=1.
					\end{cases}
					\]
			\item If $p\mid d_i$, $p\mid d_j$, $p\nmid d_k$, and $p\mid (m-2)$, then $\spnnorm{X^{\bm{d}}_p}=(\Q_p^\times)^2$.
		\end{enumerate}
	\end{enumerate}
\end{proposition}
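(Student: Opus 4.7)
The plan is to compute each local spinor norm group through the identification
\[
O^+(L_p^{\bm{d}} + \nu) = \{\sigma \in O^+(L_p^{\bm{d}}) : \sigma \nu \equiv \nu \pmod{L_p^{\bm{d}}}\}
\]
together with the classical fact that a reflection $\tau_v$ has spinor norm $Q(v)(\Q_p^\times)^2$. Hence $\spnnorm{X_p^{\bm{d}}}$ is generated modulo squares by the $Q(v)$ for those $v \in V_p$ whose reflection preserves both $L_p^{\bm{d}}$ and the coset condition. In each case I will read off the Jordan decomposition of $L_p^{\bm{d}}$ from its Gram matrix $4(m-2)^2 \operatorname{diag}(d_1^2, d_2^2, d_3^2)$ (absorbing unit factors), compute the $p$-adic valuations of the coordinates of $\nu$ in a lattice basis, and exhibit a generating set of admissible reflections.

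Case (1) is immediate: when $p \nmid 2(m-2)d_i$ for every $i$, the coefficients $\frac{4-m}{2(m-2)d_i}$ of $\nu$ in the basis $\{d_i e_i\}$ lie in $\Z_p$, so $X_p^{\bm{d}} = L_p^{\bm{d}}$, and the standard formula for the spinor norm of the orthogonal group of a unimodular $\Z_p$-lattice of rank $\geq 3$ with $p$ odd gives $\Z_p^\times(\Q_p^\times)^2$. For the subcases of~(2), the natural pool of candidate reflections splits into three families: $\tau_{e_i - e_j}$, which fixes $\nu$ exactly and has $Q$-value $8(m-2)^2 \equiv 2 \pmod{(\Q_p^\times)^2}$; $\tau_v$ for $v$ orthogonal to $e_1 + e_2 + e_3$, such as $v = e_1 + e_2 - 2e_3$, which automatically fixes $\nu$ but preserves $L_p^{\bm{d}}$ only when $p \neq 3$; and sign-change reflections $\tau_{e_i}$, which translate $\nu$ by $-2ce_i$ with $c = \frac{4-m}{2(m-2)}$ and are therefore admissible only when $2ce_i \in L_p^{\bm{d}}$. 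Depending on which reflections survive, one obtains either $\Z_p^\times(\Q_p^\times)^2$ or $(\Q_p^\times)^2$. In case (2)(a) with $p \mid (m-2)$, the loss of $\tau_{e_i}$ is compensated for $p \neq 3$ by the orthogonal-complement reflections, yielding $\Z_p^\times(\Q_p^\times)^2$, while for $p = 3$ the isotropy of $(1,1,1)$ in $\F_3^3$ removes this source as well and only squares remain. The remaining subcases (2)(b)--(d), where the Jordan decomposition has a $p^2$-modular block of rank $1$ or $2$, are handled by restricting the above reflections to those compatible with the block structure and the shift.

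The main obstacle is case (2)(c), in which one must simultaneously track (i) whether $\nu \in L_p^{\bm{d}}$, controlled by $p \mid (m-4)$ (using $\gcd(m-2,m-4) \mid 2$ and $p$ odd); (ii) the Legendre symbol $\legendre{2}{p}$, which decides whether the reflection $\tau_{e_i - e_j}$ of $Q$-value $\equiv 2$ (for $i, j$ in the $p^2$-modular block) yields a non-square unit or a square; and (iii) the spinor norm group of $O^+$ on the rank-$2$ $p^2$-modular sublattice. Throughout, the computations are facilitated by the structural fact that for odd $p$ the quotient $\Q_p^\times/(\Q_p^\times)^2$ has order~$4$, generated by $p$ together with any non-square unit, so only a few spinor norms need to be produced in each case to pin down the whole group.
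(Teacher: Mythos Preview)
Your overall strategy---compute the spinor norm group by identifying which reflections $\tau_v$ lie in $O(L_p^{\bm d}+\nu)$ and reading off $Q(v)\pmod{(\Q_p^\times)^2}$---matches the paper's, but there is a genuine gap in how you handle the cases where the answer is the \emph{small} group $(\Q_p^\times)^2$. Listing a few families of reflections ($\tau_{e_i-e_j}$, $\tau_{e_i}$, $\tau_v$ with $v\perp e_1+e_2+e_3$) and observing that those which survive all have square $Q$-value only gives a lower bound on the spinor norm group; it does \emph{not} prove that no other admissible reflection contributes a nonsquare. The sentence ``only squares remain'' is not an argument. The paper closes this gap by invoking Teterin's theorem (which you assert without citation) to know that $\theta(O^+(X_p^{\bm d}))$ is generated by products $Q(x)Q(x')$ over \emph{all} symmetries $\tau_x\in O(M_p/L_p)$, and then completely characterising that set: writing $x=x_1(d_1e_1)+x_2(d_2e_2)+x_3(d_3e_3)$ primitive in $L_p$, it derives two explicit divisibility conditions (\ref{symmetrycond:1})--(\ref{symmetrycond:2}) equivalent to $\tau_x\in O(M_p/L_p)$, and shows case by case that the resulting $Q(x)$ all lie in a single square class (for instance $2(\Q_p^\times)^2$ in (2)(a) with $p=3$, and in (2)(b), (2)(d) with $p\mid(m-2)$).

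Your specific families are also not sufficient for some of the \emph{lower} bounds. In case (2)(c) with $p\mid(m-4)$ and $\legendre{2}{p}=1$, both $\tau_{e_i-e_j}$ (with $Q\equiv 2$) and $\tau_{e_i}$ (with $Q\equiv 1$) give squares, yet the correct answer is $\Z_p^\times(\Q_p^\times)^2$. The paper obtains this by taking $x=x_1(d_1e_1)+x_2(d_2e_2)$ with $x_3\in p^2\Z_p$; when $p\mid(m-4)$ the coset condition is vacuous, so $(d_1/p)^2x_1^2+(d_2/p)^2x_2^2$ ranges over all of $\Z_p^\times\pmod{(\Q_p^\times)^2}$, producing the needed nonsquare. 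Similarly, in (2)(a) with $p\nmid d_1d_2d_3$, $p\neq 3$, the paper does not rely on a single vector like $e_1+e_2-2e_3$ (whose $Q$-value $\equiv 6$ need not be a nonsquare unit) but instead splits $L_p=\Z_p[w]\perp K_p$ with $w=e_1+e_2+e_3$ and uses the full spinor norm group $\Z_p^\times(\Q_p^\times)^2$ of the binary unimodular lattice $K_p$. To make your approach work you would need both to cite Teterin (or prove the generation statement) and to run the exhaustive analysis of conditions (\ref{symmetrycond:1})--(\ref{symmetrycond:2}) rather than checking a handpicked list of reflections.
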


\begin{proof}
 For the ease of notation, we denote $X=X^{\bm{d}}$ and $L=L^{\bm{d}}$ throughout the proof of the proposition. 
	We first note that since $O^+(X_p)\subseteq O^+(L_p)$ and $\spnnorm{L_p}= \spnnorm{\left<d_1^2, d_2^2,d_3^2\right>}$, 
	\[
	(\Q_p^{\times})^2\subseteq \spnnorm{X_p}\subseteq \spnnorm{L_p}\subseteq\Z_p^\times(\Q_p^\times)^2.
	\]
	Hence $\spnnorm{X_p}$ could be either $(\Q_p^\times)^2$ or $\Z_p^\times(\Q_p^\times)^2$.

\noindent
(1) In this case, we have $X_p=L_p\cong\left<1,1,1\right>$ and it is well known that 
\[
\spnnorm{\left<1,1,1\right>}=\Z_p^{\times}(\Q_p^{\times})^2.
\]

\noindent
(2) Assume that $p\nmid d_1d_2d_3$ and $p\neq 3$. Let $w=\frac{2(m-2)}{(m-4)}\nu=e_1+e_2+e_3$ and note that
	\[
	L_p=\Z_p\left[w,d_1e_1-\frac{d_1}{3}w,d_2e_2-\frac{d_2}{3}w\right]\cong \Z_p[w]\perp K_p,
	\]
	where $K_p$ is a binary lattice with $K_p\cong \frac{4}{3}(m-2)^2
	\left[\begin{matrix}
		2d_1^2& -d_1d_2\\
		-d_1d_2& 2d_2^2 
	\end{matrix}\right]$.
	Any isometry in $O^+(K_p)$ can be extended to an isometry $\sigma\in O^+(L_p)$ satisfying $\sigma(w)=w$, equivalently, $\sigma(\nu)=\nu$, and therefore $\sigma \in O^+(L_p+\nu)$. Thus, since $\spnnorm{K_p}=\Z_p^{\times}(\Q_p^{\times})^2$, we may conclude that 
\[
\Z_p^\times(\Q_p^{\times})^2 =\spnnorm{K_p}\subseteq \spnnorm{L_p+\nu},
\]
so  $\spnnorm{L_p+\nu}=\Z_p^\times(\Q_p^{\times})^2$.
	
	Assume that $p\mid d_i$ for all $i=1,2,3$ and $p\neq 3$. Then with the same notation, we have 
	\[
	L_p=\Z_p\left[pw,d_1e_1-\frac{d_1}{3}w,d_2e_2-\frac{d_2}{3}w\right]\cong \Z_p[pw]\perp K_p,
	\]
	Hence we may similarly conclude that $\spnnorm{L_p+\nu}=\Z_p^\times(\Q_p^{\times})^2$.
	We have proved the proposition for the case (2)-(a) with $p\neq 3$ so far.
	
	In proving the remaining cases, we make use of \cite[Theorem 2]{TeterinSpinornorm} to compute $\spnnorm{L_p+\nu}$.
	Let $M_p=L_p+\Z_p\nu$ and as defined in \cite{TeterinSpinornorm}, let
	\[
	O(M_p/L_p)=\{\sigma\in O(V_p):\sigma(x)\in x+L_p \text{ for all } x\in M_p\}.
	\]
	Note that $\sigma\in O(M_p/L_p)$ if and only if $\sigma(L_p)=L_p$ and $\sigma(\nu)\equiv \nu \pmod{L_p}$. Thus, $O(M_p/L_p)=O(L_p+\nu)$ and $O^+(M_p/L_p)=O^+(L_p+\nu)$. From \cite[Theorem 2]{TeterinSpinornorm}, we know that $\spnnorm{M_p/L_p}$ and hence $\spnnorm{L_p+\nu}=\spnnorm{X_p}$ is generated by pairs of spinor norms of symmetries coming from $O(M_p/L_p)$.
	
	Let $\tau$ be a symmetry in $O(V_p)$. Then there is a primitive vector $x=x_1(d_1e_1)+x_2(d_2e_2)+x_3(d_3e_3)\in L_p$ such that
	\[
	\tau(y)=\tau_x(y)=y-\frac{2B(x,y)}{Q(x)}x
	\]
	for any $y\in L_p$. Note that
	\[
	\begin{aligned}
	\tau_x(d_ie_i)&=d_ie_i-\frac{2\cdot 4(m-2)^2d_i^2x_i}{4(m-2)^2(d_1^2x_1^2+d_2^2x_2^2+d_3^2x_3^2)}x
	=d_ie_i-\frac{2d_i^2x_i}{d_1^2x_1^2+d_2^2x_2^2+d_3^2x_3^2}x,\\
	\tau_x(\nu)&=\nu-\frac{2\cdot4(m-2)^2\frac{4-m}{2(m-2)}(d_1x_1+d_2x_2+d_3x_3)}{4(m-2)^2(d_1^2x_1^2+d_2^2x_2^2+d_3^2x_3^2)}x
	=\nu+\frac{(m-4)(d_1x_1+d_2x_2+d_3x_3)}{(m-2)(d_1^2x_1^2+d_2^2x_2^2+d_3^2x_3^2)}x.
	\end{aligned}
	\]
	Hence $\tau_x\in O(M_p/L_p)$ if and only if 
	\begin{gather}
		\frac{2d_i^2x_i}{d_1^2x_1^2+d_2^2x_2^2+d_3^2x_3^2}\in \Z_p \text{ for all } i=1,2,3, \quad \text{and} \label{symmetrycond:1} \\ 
		\frac{(m-4)(d_1x_1+d_2x_2+d_3x_3)}{(m-2)(d_1^2x_1^2+d_2^2x_2^2+d_3^2x_3^2)}\in\Z_p.\label{symmetrycond:2}
	\end{gather}
Since 
\[
\spnnorm{X_p}=\left\{ Q(x_1)Q(x_2)(Q_p^{\times})^2: \tau_{x_1},\tau_{x_2}\in O(M_p/L_p)\right\},
\]
we next use \eqref{symmetrycond:1} and \eqref{symmetrycond:2} to compute the set $\{Q(x) (\Q_p^\times)^2: \tau_x \in O(M_p/L_p)\}$ case-by-case according to the different cases that occur in the statement of the proposition.
	Assume that $\tau_x\in O(M_p/L_p)$.\\ 

	\noindent{\bf Case (2)-(a) with $p=3$}.
	
	Let $(d_1,d_2,d_3,3)=g$. Assume without loss of generality that $x_1\in\Z_3^\times$. Then \eqref{symmetrycond:1} for $i=1$ implies that $(d_1/g)^2x_1^2+(d_2/g)^2x_2^2+(d_3/g)^2x_3^2 \in \Z_3^\times$, from which we have $x_1^2+x_2^2+x_3^2\not\equiv 0\pmod{3}$ since $(d_i/g)^2\equiv 1 \pmod{3}$. Hence we may assume without loss of generality that $x_3\in 3\Z_3$.
	On the other hand, \eqref{symmetrycond:2} implies that $d_1x_1+d_2x_2+d_3x_3\equiv 0\pmod{3g}$.
	Hence we have $(d_1/g)x_1+(d_2/g)x_2 \equiv 0\pmod{3}$ so that $(d_1/g)^2x_1^2+(d_2/g)^2x_2^2+(d_3/g)^2x_3^2 \equiv 2 \pmod{3}$. Thus,
		\[
	\{Q(x)(\Q_3^\times)^2: \tau_x \in O(M_3/L_3)\} \subseteq \{2(\Q_3^\times)^2 \},
	\]
	so we may conclude that $\spnnorm{L_3+\nu}\subseteq (\Q_3^\times)^2$.\\
	
	\noindent{\bf Case (2)-(b)} Assume without loss of generality that $p\mid d_1$, $p \nmid d_2d_3$. 
	
	Assume that $p \nmid (m-2)$. If either $x_2\in \Z_p^\times$ or $x_3\in \Z_p^\times$, then by \eqref{symmetrycond:1} we necessarily have $d_1^2x_1^2+d_2^2x_2^2+d_3^2x_3^2\in \Z_p^\times$, and then \eqref{symmetrycond:2} is automatically satisfied. On the other hand, since $\left<d_2^2,d_3^2\right>\cong \left<1,1\right>$, we have 
	\[
	\{(d_1^2x_1^2+d_2^2x_2^2+d_3^2x_3^2) (\Q_p^\times)^2 : x_2 \in \Z_p^\times \text{ or } x_3\in \Z_p^\times\} \supseteq \{(\Q_p^\times)^2, \Delta_p(\Q_p^\times)^2 \}.
	\]
	Thus, $\{Q(x)(\Q_p^\times)^2: \tau_x \in O(M_p/L_p)\} \supseteq \{(\Q_p^\times)^2, \Delta_p(\Q_p^\times)^2 \}$ so that we may conclude that $\spnnorm{L_p+\nu}\supseteq \Z_p^\times (\Q_p^\times)^2$.
	
	Assume that $p \mid (m-2)$. If either $x_2\in \Z_p^\times$ or $x_3\in \Z_p^\times$, then by \eqref{symmetrycond:1} we necessarily have $d_1^2x_1^2+d_2^2x_2^2+d_3^2x_3^2\in \Z_p^\times$. Since $p\mid (m-2)$, we have $p\nmid (m-4)$, and hence \eqref{symmetrycond:2} implies that $d_1x_1+d_2x_2+d_3x_3\equiv 0 \pmod{p}$. Thus, $d_2x_2+d_3x_3\equiv 0 \pmod{p}$, so we have $d_1^2x_1^2+d_2^2x_2^2+d_3^2x_3^2\in 2(d_2x_2)^2(\Q_p^\times)^2=2(\Q_p^\times)^2$.
		
	Otherwise, if $x_2,x_3\in p\Z_p$, then $x_1\in \Z_p^\times$ since $x$ is a primitive vector of $L_p$. Then by \eqref{symmetrycond:1} for $i=1$ we necessarily have $d_1^2x_1^2+d_2^2x_2^2+d_3^2x_3^2 \in p^2 \Z_p^\times$. Again by \eqref{symmetrycond:1} for $i=2,3$ we have $x_2,x_3\in p^2\Z_p$.
	Since $p\nmid (m-4)$, \eqref{symmetrycond:2} implies that $d_1x_1+d_2x_2+d_3x_3\in p^3\Z_p$. This implies $x_1\in p\Z_p$, which is a contradiction. Hence we have proved that 
	\[
	\{Q(x)(\Q_p^\times)^2: \tau_x \in O(M_p/L_p)\} \subseteq \{2(\Q_p^\times)^2 \},
	\]
	so we may conclude that $\spnnorm{L_p+\nu}\subseteq (\Q_p^\times)^2$.\\

	\noindent{\bf Case (2)-(c)} Assume without loss of generality that $p\mid d_1$, $p \mid d_2$, $p\nmid d_3$, and $p\nmid (m-2)$. 
	
	If $x_3\in \Z_p^\times$, then $d_1^2x_1^2+d_2^2x_2^2+d_3^2x_3^2\in\Z_p^\times$, so that both \eqref{symmetrycond:1} and \eqref{symmetrycond:2} always hold. In this case, $Q(x)\in d_3^2x_3^2(\Q_p^\times)^2=(\Q_p^\times)^2$.
	Now we assume $x_3\in p\Z_p$. Then since $d_1^2x_1^2+d_2^2x_2^2+d_3^2x_3^2\in p^2\Z_p$, we have $x_3\in p^2\Z_p$. Since either $x_1\in\Z_p^\times$ or $x_2\in \Z_p^\times$, \eqref{symmetrycond:1} implies that $d_1^2x_1^2+d_2^2x_2^2+d_3^2x_3^2\in p^2\Z_p^\times$.
	Now \eqref{symmetrycond:2} implies that $(m-4)(d_1x_1+d_2x_2+d_3x_3)\in p^2\Z_p$, equivalently, $(m-4)((d_1/p)x_1+(d_2/p)x_2)\in p\Z_p$.
	If $p\mid (m-4)$, then this condition always hold, and since $Q(x)\in ((d_1/p)^2x_1^2+(d_2/p)^2x_2^2) (\Q_p^\times)^2$, we have 
	\[
	\{Q(x) (\Q_p^\times)^2 : x_3 \in p^2\Z_p, \text{ and either } x_1 \in \Z_p^\times \text{ or } x_2\in \Z_p^\times\} \supseteq \{(\Q_p^\times)^2, \Delta_p(\Q_p^\times)^2 \}.
	\]
	Hence $\spnnorm{L_p+\nu}\supseteq \Z_p^\times (\Q_p^\times)^2$ in this case. 
	If $p\nmid (m-4)$, then we necessarily have $(d_1/p)x_1+(d_2/p)x_2\in p\Z_p$ so that $x_1,x_2\in\Z_p^\times$ and $Q(x)/p^2 \in 2(d_1/p)^2x_1^2 (\Q_p^\times)^2=2(\Q_p^\times)^2$. Hence,
	\[
	\{Q(x) (\Q_p^\times)^2: \tau_x \in O(M_p/L_p)\} = \{(\Q_p^\times)^2, 2(\Q_p^\times)^2 \},
	\]
	from which $\spnnorm{L_p+\nu}$ is determined as in the statement according to $\legendre{2}{p}=\pm 1$.\\
	
	\noindent{\bf Case (2)-(d)} Assume without loss of generality that $p\mid d_1$, $p \mid d_2$, $p\nmid d_3$, and $p \mid (m-2)$.
	
	If $x_3\in\Z_p^\times$, then \eqref{symmetrycond:1} for $i=3$ implies that $d_1^2x_1^2+d_2^2x_2^2+d_3^2x_3^2\in \Z_p^\times$, hence  \eqref{symmetrycond:2} implies that $d_1x_1+d_2x_2+d_3x_3\in p\Z_p$.
	So we have $d_3x_3\in p\Z_p$, which is a contradiction.
	Thus, $x_3\in p\Z_p$. This implies that $d_1^2x_1^2+d_2^2x_2^2+d_3^2x_3^2\in p^2\Z_p$ and \eqref{symmetrycond:1} for $i=3$ implies that $x_3\in p^2\Z_p$.
	Since either $x_1\in\Z_p^\times$ or $x_2\in \Z_p^\times$, \eqref{symmetrycond:1} implies that $d_1^2x_1^2+d_2^2x_2^2+d_3^2x_3^2\in p^2\Z_p^\times$.
	Now, \eqref{symmetrycond:2} implies that $d_1x_1+d_2x_2+d_3x_3\in p^3\Z_p$, from which we have $(d_1/p)x_1+(d_2/p)x_2\equiv 0\pmod{p}$.
	Thus, $(d_1^2x_1^2+d_2^2x_2^2+d_3^2x_3^2)/p^2 \in 2(d_1/p)^2x_1^2 (\Q_p^\times)^2=2(\Q_p^\times)^2$, so $\spnnorm{L_p+\nu}\subseteq (\Q_p^\times)^2$.
\end{proof}

\begin{proposition}\label{localspinornorm:prime2}
	Let $\bm{d}\in \mathcal{S}_1^3$ and $m\geq 3$ be odd. Then
	\[
	\spnnorm{X^{\bm{d}}_2} = \Z_2(\Q_2^\times)^2.
	\]
\end{proposition}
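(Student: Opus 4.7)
The plan is to apply Teterin's theorem, exactly as in the proof of Proposition \ref{localspinornorm:oddprime} above. It identifies $\spnnorm{X^{\bm d}_2}$ with the subgroup of $\Q_2^\times/(\Q_2^\times)^2$ generated by the products $Q(x_1)Q(x_2)(\Q_2^\times)^2$ as $\tau_{x_1}, \tau_{x_2}$ range over symmetries in $O(M_2/L_2)$. Writing $x = \sum x_i d_i e_i$, the conditions \eqref{symmetrycond:1} and \eqref{symmetrycond:2} simplify at $p = 2$ because $m$, $m-2$, $m-4$, and the $d_i$ are all odd, hence $2$-adic units: they reduce to $\tfrac{2 d_i^2 x_i}{S} \in \Z_2$ for every $i$ and $\tfrac{x_j T}{S} \in \Z_2$ for every $j$, where $S := \sum_k d_k^2 x_k^2$ and $T := \sum_k d_k x_k$.

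Next I would exhibit four primitive vectors in $L^{\bm d}_2$ together with the cosets of $Q$ modulo squares, using repeatedly the identity $d_i^2 \equiv 1 \pmod 8$: (i) $x^{(1)} = d_1 e_1$ has $Q \equiv 1$; (ii) $x^{(2)} = d_1 e_1 + 2 d_2 e_2$ has $Q \equiv d_1^2 + 4 d_2^2 \equiv 5 \pmod 8$; (iii) $x^{(3)} = d_1 e_1 + d_2 e_2 + d_3 e_3$ has $Q \equiv d_1^2 + d_2^2 + d_3^2 \equiv 3 \equiv -5 \pmod 8$; and (iv) $x^{(4)} = d_1 e_1 + d_2 e_2$ has $Q \equiv 2u \pmod{(\Q_2^\times)^2}$, where $u := (d_1^2 + d_2^2)/2 \equiv 1 \pmod 4$ is a $2$-adic unit. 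Cases (i)--(iii) satisfy the Teterin conditions automatically, since there $S \in \Z_2^\times$; case (iv) is the only one with $S \in 2\Z_2^\times$, and there the condition $\tfrac{x_j T}{S} \in \Z_2$ holds because $T = d_1 + d_2 \in 2\Z_2$ as a sum of two odd integers.

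Forming the products $Q(x^{(1)}) Q(x^{(i)})(\Q_2^\times)^2$ for $i = 2, 3, 4$, together with $Q(x^{(2)}) Q(x^{(3)}) \equiv 5 \cdot (-5) \equiv -1 \pmod{(\Q_2^\times)^2}$, places $\{-1, 5, 2u\}$ inside $\spnnorm{X^{\bm d}_2}$. The elements $-1$ and $5$ generate $\Z_2^\times/(\Z_2^\times)^2$, while $2u$ lies in the nontrivial coset of $\Z_2^\times(\Q_2^\times)^2$ inside $\Q_2^\times/(\Q_2^\times)^2$, so $\{-1, 5, 2u\}$ is an $\F_2$-basis of $\Q_2^\times/(\Q_2^\times)^2 \cong (\Z/2\Z)^3$. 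Hence $\spnnorm{X^{\bm d}_2} = \Q_2^\times$, which coincides with $\Z_2(\Q_2^\times)^2$ since every element of $\Q_2^\times$ has the form $2^a v$ with $v \in \Z_2^\times$ and is therefore representable modulo squares by an element of $\Z_2$.

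The main subtlety, compared with the odd-prime analysis, is that at $p = 2$ the shifted lattice genuinely admits a reflection with $Q$-value in the nontrivial coset $2\Z_2^\times(\Q_2^\times)^2$, namely $\tau_{x^{(4)}}$. It is this reflection that enlarges the spinor norm from $\Z_2^\times(\Q_2^\times)^2$ (which is all that cases (i)--(iii) would yield) up to the full group $\Q_2^\times$. The technical heart of the proof is therefore the verification of the second Teterin condition in case (iv), where the denominator $S$ fails to be a $2$-adic unit and the parity cancellation $T \in 2\Z_2$ is essential.
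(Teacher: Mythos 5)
Your argument is correct, but it follows a genuinely different route from the paper's. The paper disposes of $p=2$ without any symmetry computation: rescaling the basis by the $2$-adic units $((m-2)d_i)^{-1}$ identifies $X^{\bm{d}}_2$ with $\left<4,4,4\right>+\frac{1}{2}(f_1+f_2+f_3)$ (so the coset at $2$ is independent of $\bm{d}$ and $m$), then it invokes the proof of \cite[Proposition 2.3]{Kim} to conclude $O^+(L_2+\nu)=O^+(L_2)$ and quotes the known spinor norm of the (scaled) cube lattice, namely $\Z_2(\Q_2^\times)^2$. You instead work as in Proposition \ref{localspinornorm:oddprime}: the reduction of \eqref{symmetrycond:1}--\eqref{symmetrycond:2} at $p=2$ to $\frac{2d_i^2x_i}{S},\frac{T}{S}\in\Z_2$ is right (since $m-2$, $m-4$, $d_i$ are $2$-adic units), the four vectors you exhibit do satisfy these conditions (in case (iv) the key point $T=d_1+d_2\in 2\Z_2$ is exactly what saves \eqref{symmetrycond:2}, and \eqref{symmetrycond:1} holds since $2d_i^2x_i/S=d_i^2x_i/u$), and since $Q(x)=4(m-2)^2S\equiv S$ modulo squares your square classes $1,5,3,2u$ and the generation of all of $\Q_2^\times/(\Q_2^\times)^2$ by the pairwise products are correct. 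One caveat worth noting: you present this as an application of Teterin's theorem, but you only use the trivial inclusion -- a product of two symmetries lying in $O(M_2/L_2)=O(L_2+\nu)$ is a proper isometry of the coset with spinor norm $Q(x_1)Q(x_2)(\Q_2^\times)^2$ -- which needs no citation; the substantive direction of Teterin's theorem (that the spinor norm group is generated by such symmetry pairs) is precisely the kind of statement that is delicate at the prime $2$, and the paper deliberately avoids it there. Since you are proving the group is all of $\Q_2^\times=\Z_2(\Q_2^\times)^2$, the easy inclusion plus the trivial upper bound suffices, so your proof stands; phrasing it that way would make it airtight. What each approach buys: yours is self-contained and makes explicit which reflection (the one at $d_1e_1+d_2e_2$, with $Q$ in $2\Z_2^\times$) enlarges the spinor norm beyond the unit classes, while the paper's is shorter, outsourcing the work to \cite{Kim} and the classical computation for $\left<1,1,1\right>$.
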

\begin{proof}
	For the ease of notations, we denote $X=X^{\bm{d}}$ and $L=L^{\bm{d}}$ throughout the proof of the proposition. 
	Changing the basis for $L_2$ with $f_i = ((m-2)d_i)^{-1}e_i$ ($i=1,2,3$), we have $L_2\cong \left<4,4,4\right>$ and
	\[
	L_2+\nu = L_2 + \frac{4-m}{2}\left(f_1+f_2+f_3\right) = L_2 + \frac{1}{2}\left(f_1+f_2+f_3\right),
	\]
	hence one may show that $O^+(L_2+\nu) = O^+(L_2)$ (see the proof of \cite[Proposition 2.3]{Kim}).
	Thus
	\[
	\spnnorm{L_2+\nu}=\spnnorm{L_2}=\Z_2(\Q_2^\times)^2.
	\]
	This completes the proof of the proposition.
\end{proof}
For $\bm{d}=\bm{1}$, the genus and spinor genus coincide, yielding that \eqref{eqn:sum3mgonal} is solvable for sufficiently large $n$.
\begin{theorem}\label{thm:3mgonaloddalmostuniversal}
	Let $m\ge3$ be an odd integer. Then $\pgen(X^{\bm{1}})=\pspn(X^{\bm{1}})$. In particular, for every sufficiently large integer $n$, the equation
	\[
		p_m(x)+p_m(y)+p_m(z)=n
	\]
	is solvable with $x,y,z\in\Z$.
\end{theorem}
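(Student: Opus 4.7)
The plan is to establish $\pgen(X^{\bm 1})=\pspn(X^{\bm 1})$ via the id\`elic formula \eqref{eqn:numberofproperspinorgenera}, and then to deduce the representation statement from Lemma \ref{lem:X1lower} combined with standard Ramanujan--Petersson-type savings for weight-$3/2$ cusp forms orthogonal to the unary theta subspace \eqref{eqn:unarydef}. It therefore suffices to prove that $[I_\Q:\Q^\times\cdot\prod_{p\in\Omega}\spnnorm{X_p^{\bm 1}}]=1$.

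To this end, I would first plug $\bm d=\bm 1$ into Propositions \ref{localspinornorm:oddprime} and \ref{localspinornorm:prime2}. Since $p\nmid d_1d_2d_3$ at every prime for $\bm d=\bm 1$, we always land in case (1) or case (2)(a) of Proposition \ref{localspinornorm:oddprime}, which together with Proposition \ref{localspinornorm:prime2} at $p=2$ yields $\spnnorm{X_p^{\bm 1}}=\Z_p^\times(\Q_p^\times)^2$ at every finite prime with the sole exception of $p=3$ when $3\mid(m-2)$, in which case $\spnnorm{X_3^{\bm 1}}=(\Q_3^\times)^2$; positive definiteness gives $\spnnorm{X_\infty^{\bm 1}}=\R^{>0}$. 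For the id\`elic surjectivity, given $x=(x_v)\in I_\Q$, the rational $r=\sgn(x_\infty)\prod_p p^{\ord_p(x_p)}$ (a finite product) makes $x/r$ have positive archimedean component and trivial valuation at every finite prime, so if $3\nmid(m-2)$ it already lies in $\prod_p\spnnorm{X_p^{\bm 1}}$. If $3\mid(m-2)$, a further correction of the shape $\pm 3^j$ is used to additionally place the unit part at $3$ inside $(\Z_3^\times)^2$; tracking the sign at infinity jointly with the unit class modulo $3$ and inspecting the four available classes $\{\pm 1,\pm 3\}$, one of them always achieves this, so the id\`elic index is $1$.

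Once $\pgen(X^{\bm 1})=\pspn(X^{\bm 1})$ is established, $\Theta_{\pgen(X^{\bm 1})}=\Theta_{\pspn(X^{\bm 1})}$, and consequently $\Theta_{X^{\bm 1}}-\Theta_{\pgen(X^{\bm 1})}$ is a cusp form in the orthogonal complement of the unary theta subspace. Its Fourier coefficients satisfy a Duke/Iwaniec-type bound $O(h^{1/2-\delta})$ for some absolute $\delta>0$, which is dominated by the Eisenstein lower bound $r_{\pgen(X^{\bm 1})}(h)\gg h^{1/2-\varepsilon}$ coming from Lemma \ref{lem:X1lower} when $h=8(m-2)n+3(m-4)^2$ is sufficiently large. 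Hence $r_{X^{\bm 1}}(h)>0$ for every sufficiently large $n$ at which $h$ is locally represented at every place---a condition that is elementary to verify (at $p=2$, for instance, $h\equiv 3\pmod 8$ is always a sum of three odd squares, and at odd primes the local representability follows from the computations in Section \ref{sec:Eisenstein}). \textbf{The main obstacle} is the id\`elic surjectivity argument in the case $3\mid(m-2)$: one must verify that the diagonal copy of $\Q^\times$ fills the gap created by the strictly smaller local spinor norm group at $p=3$, which requires a careful simultaneous bookkeeping of valuations, unit classes modulo $3$, and signs at infinity.
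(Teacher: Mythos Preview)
Your overall strategy matches the paper's proof exactly: compute the local spinor norm groups via Propositions \ref{localspinornorm:oddprime} and \ref{localspinornorm:prime2}, verify id\`elic surjectivity in \eqref{eqn:numberofproperspinorgenera}, and then compare the Eisenstein lower bound of Lemma \ref{lem:X1lower} against Duke's bound for the cuspidal part orthogonal to unary theta functions. The second half of your argument is fine.

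There is, however, a genuine gap in your id\`elic correction when $3\mid(m-2)$. Your claim that one of $\{\pm 1,\pm 3\}$ always works is false. After your first reduction, the id\`ele $j=x/r$ satisfies $j_\infty>0$ and $j_p\in\Z_p^\times$ for all finite $p$. Suppose $j_3\in 2(\Z_3^\times)^2$ (the bad case). You need $c\in\Q^\times$ with $c\cdot j_\infty>0$, $c\cdot j_3\in(\Q_3^\times)^2$, and $c\cdot j_p\in\spnnorm{X_p^{\bm 1}}$ for all other $p$. The sign constraint forces $c>0$, leaving only $c\in\{1,3\}$ from your list. But in $\Q_3^\times/(\Q_3^\times)^2\cong\{1,2,3,6\}$, the element $1$ is the trivial class and $3$ is the uniformizer class; neither equals the non-square unit class $2$ needed to cancel $j_3$. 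So no element of $\{\pm 1,\pm 3\}$ does the job.

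The paper instead multiplies by $c=2$: it is positive at infinity, lies in $2(\Z_3^\times)^2$ at $p=3$ (so $2j_3\in(\Z_3^\times)^2$), lies in $\Z_p^\times$ at every odd $p\neq 3$, and at $p=2$ one has $2\in\Z_2(\Q_2^\times)^2=\spnnorm{X_2^{\bm 1}}$ by Proposition \ref{localspinornorm:prime2}. The point you missed is that the spinor norm group at $2$ is $\Z_2(\Q_2^\times)^2$, which is all of $\Q_2^\times$ (not merely $\Z_2^\times(\Q_2^\times)^2$); this extra room at the prime $2$ is precisely what allows a positive rational in the non-square unit class at $3$ without creating new obstructions elsewhere.
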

\begin{proof}
	To prove $\pgen(X^{\bm{1}})=\pspn(X^{\bm{1}})$, it suffice to show that 
	\[
	\left[I_\Q : \Q^\times \prod\limits_{p\in \Omega} \spnnorm{X_p^{\bm{1}}}\right]=1
	\]
	as is described in \eqref{eqn:numberofproperspinorgenera}. 	By Propositions \ref{localspinornorm:oddprime} and \ref{localspinornorm:prime2}, we have
	\[
		\spnnorm{X_p^{\bm{1}}} =
		\begin{cases} 
			\Z_2 (\Q_2^\times)^2 & \text{if } p=2,\\
			(\Q_3^\times)^2& \text{if } p=3 \text{ and } 3\mid m-2,\\
			\Z_p^\times (\Q_p^\times)^2 & \text{otherwise}.
		\end{cases}	
	\]
	For any $\iota\in I_\Q$, one may write $\iota=a\cdot j$ for some $a\in \Q^\times$ and $j=(j_p)\in I_\Q$ with $j_p\in\Z_p^\times$, $j_\infty >0$. 
	If $3\nmid m-2$, then $j\in\prod_{p\in \Omega} \spnnorm{X_p^{\bm{1}}}$ and hence we are done. Assume that $3\mid m-2$. If $j_3\in (\Q_3^\times)^2$, then $j_3\in \spnnorm{X_p^{\bm{1}}}$ and hence we are done. If $j_3\in 2(\Q_3^\times)^2$, then $2j\in \prod_{p\in \Omega} \spnnorm{X_p^{\bm{1}}}$ so that $\iota=(a/2)(2j)\in \Q^\times \prod_{p\in \Omega} \spnnorm{X_p^{\bm{1}}}$.
	
To conclude the second statement, note that since $\pgen(X^{\bm{1}})=\pspn(X^{\bm{1}})$, we may write 
\[
\Theta_X= \Theta_{\pgen(X^{\bm{1}})}+\Theta_X-\Theta_{\pgen(X^{\bm{1}})}= \Theta_{\pgen(X^{\bm{1}})}+\Theta_X-\Theta_{\pspn(X^{\bm{1}})}.
\]
By \cite[Theorem 6.2]{KaneKim}, $\Theta_X-\Theta_{\pspn(X^{\bm{1}})}$ is in the subspace of cusp forms which are orthogonal to unary theta functions. To obtain the claim, one then compares the lower bound $r_{\pgen(X^{\bm{1}})}(h)\gg_{m,\varepsilon} h^{\frac{1}{2}-\varepsilon}$ from Lemma \ref{lem:X1lower} with the upper bound of the absolute value of the Fourier coefficients of $\Theta_X-\Theta_{\pspn(X^{\bm{1}})}$ given by Duke \cite{Duke}.
\end{proof}

\subsection{Unary theta function part}
Although we are unsure how many spinor genera lie in the genus for $\bm{d}\in\mathcal{S}_1^3$, we are able to show that the theta functions for the genus and spinor genus coincide.
\begin{proposition}\label{proposition:unarypartforodd:d}
Let $\bm{d}\in\mathcal{S}_1^3$ and $m\geq 3$ be an odd integer. Then
\[
\Theta_{\pgen(X^{\bm{d}})}(z)=\Theta_{\pspn(X^{\bm{d}})}(z).
\]
\end{proposition}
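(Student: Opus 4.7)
The plan is to show that the difference $f(z) := \Theta_{\pgen(X^{\bm{d}})}(z) - \Theta_{\pspn(X^{\bm{d}})}(z)$ vanishes identically. By a Schulze--Pillot-type result for shifted lattices (analogous to the one used in \cite{KaneKim}), $f$ lies in the subspace $U_t(\Gamma_0(4N_{L^{\bm{d}}})\cap\Gamma_1(a),\chi_{4d_{L^{\bm{d}}}})$ spanned by unary theta functions as in \eqref{UtNchi-is-spanned-by}. Since every element of this subspace has nonzero Fourier coefficients only at integers of the shape $tu^2k^2$, it suffices to verify $r_{\pgen(X^{\bm{d}})}(h) = r_{\pspn(X^{\bm{d}})}(h)$ for such squareful $h$.

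First I would apply \eqref{eqn:numberofproperspinorgenera} together with the local spinor-norm computations of Propositions \ref{localspinornorm:oddprime} and \ref{localspinornorm:prime2} to identify the index $\left[I_\Q : \Q^\times \prod_{p\in\Omega}\spnnorm{X^{\bm{d}}_p}\right]$. Whenever this index is $1$ the proposition follows immediately, as in the argument at the end of the proof of Theorem \ref{thm:3mgonaloddalmostuniversal}. The remaining cases are precisely those in which some local factor $\spnnorm{X^{\bm{d}}_p}$ collapses from $\Z_p^\times(\Q_p^\times)^2$ down to $(\Q_p^\times)^2$, namely at $p=3$ when $3\mid(m-2)$, and at odd primes $p\mid d_1 d_2 d_3$ subject to the congruence conditions of Proposition \ref{localspinornorm:oddprime}(2)(b)--(d).

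In those cases I would expand the candidate $f$ in the explicit unary basis \eqref{UtNchi-is-spanned-by} and compute its Fourier coefficients at $h=tu^2k^2$ by averaging $r_Y(h)$ over the proper spinor genera $Y$ split off by the idelic classes witnessing the non-triviality of the index. Grouping the local factors by prime, the coefficient at $(t,\psi,u)$ becomes a product of local terms governed by $\spnnorm{X^{\bm{d}}_p}$; the claim is that each such product already vanishes at the relevant primes. The engine here is the fact that the shift $\nu=\frac{4-m}{2(m-2)}(e_1+e_2+e_3)$ has equal components, so that the stabilizer $O^+(L^{\bm{d}}+\nu)$ contains the permutations of the three coordinate subspaces; this symmetry, combined with the squarefree-and-odd hypothesis $\bm{d}\in\mathcal{S}_1^3$, forces the character sum appearing in each local coefficient to cancel.

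The main obstacle will be the case analysis for primes where two of the $d_i$ are divisible by $p$ but the third is not (Proposition \ref{localspinornorm:oddprime}(2)(c)--(d)), because here the asymmetry between the coordinates is genuine and one cannot invoke a permutation symmetry directly. For these primes I would instead compare $f$ against the explicit local density formulas in Lemmas \ref{lemma:localdensity:poddnotdividingm-2m-4} and \ref{lemma:localdensity:poddnotdividingm-2butm-4}, showing that the ``unary candidate'' coefficient $c_{t,\psi,u}$ extracted from the splitting is proportional to a local factor which already equals zero on the support $h=tu^2k^2$. Assembling these local vanishings globally yields $f\equiv 0$ and completes the proof.
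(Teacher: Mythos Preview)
Your proposal misses the key idea and contains a genuine error. The paper's proof is much shorter and works entirely at the prime $2$: by \cite[Theorem 5.6]{KaneKim}, any square class $t\Z^2$ supporting a nonzero coefficient of $\Theta_{\pspn(X^{\bm d})}-\Theta_{\pgen(X^{\bm d})}$ must satisfy $\spnnorm{X^{\bm d}_p}\subseteq N_{E_{\mathfrak p}/\Q_p}(E_{\mathfrak p}^\times)$ for all $p$, with $E=\Q(\sqrt{-t d_L})$. At $p=2$, Proposition~\ref{localspinornorm:prime2} gives $\spnnorm{X^{\bm d}_2}=\Z_2(\Q_2^\times)^2$, and the norm condition then forces $-t\in(\Q_2^\times)^2$, i.e.\ $t\equiv 7\pmod 8$. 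But every element of $X^{\bm d}$ has norm $\equiv 3\pmod 8$ (a direct computation since $m$ is odd and each $d_j$ is odd), so no such $t$ is locally represented. Done. You never need to touch the odd primes where the local spinor norm collapses, and you never need to count spinor genera.

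Your plan, by contrast, tries to handle the odd primes case by case, and the main tool you invoke is wrong: you claim that $O^+(L^{\bm d}+\nu)$ contains the coordinate permutations because $\nu$ has equal components. This is false in general. A permutation $\sigma$ sends $d_j e_j\mapsto d_j e_{\sigma(j)}$, which lies in $L^{\bm d}$ only when $d_{\sigma(j)}\mid d_j$; for distinct squarefree $d_j$ this fails. So the ``symmetry forces cancellation'' step has no foundation, and your fallback for the asymmetric primes (Proposition~\ref{localspinornorm:oddprime}(2)(c)--(d)) is left as a vague appeal to local density formulas without any mechanism for why the unary coefficient should vanish. The actual obstruction is $2$-adic, not a cancellation at odd primes; once you use the $2$-adic argument above, all of the odd-prime case analysis becomes unnecessary.
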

\begin{proof}
For the ease of notation, we denote $X=X^{\bm{d}}$ and $L=L^{\bm{d}}$ throughout the proof of the proposition. Assume to the contrary that $\Theta_{\pspn(L+\nu)}(z)-\Theta_{\pgen(L+\nu)}(z)\neq 0$. Then by \cite[Theorem 5.6]{KaneKim}, its Fourier coefficients are supported on finitely many square classes $t\Z^2$ ($t$ : squarefree) such that some integers in $t\Z^2$ is locally represented by $X$ and satisfying
\begin{equation}\label{condition-splittingintegers}
\spnnorm{X_p} \subseteq N_{E_\mathfrak{p}/\Q_p}(E_\mathfrak{p}^\times) \quad  \forall \ p \ (\mathfrak{p}\mid p),	
\end{equation}
where $E=\Q\left(\sqrt{-td_L}\right)$. Let $t\Z^2$ be a square class satisfying \eqref{condition-splittingintegers}.
Noting that $\spnnorm{X^{\bm{d}}_2}=\Z_2 (\Q_2^\times)^2$ by Proposition \ref{localspinornorm:prime2}, we necessarily have
\[
\left( \frac{\alpha,-t}{2}\right) =1 \quad \text{for all } \alpha\in\Z_2,
\]
where $\left(\frac{-,-}{2}\right)$ denotes the Hilbert symbol at $2$. This implies in our case that $-t\in (Q_2^\times)^2$. Thus we have $t\equiv 7 \pmod{8}$.
However, no integers of the form $tx^2$ is $2$-adically represented by $X$ since for any $x_j\in\Z_2$,
\[
Q(x_1(d_1e_1)+x_2(d_2e_2)+x_3(d_3e_3)+\nu)=\sum_{j=1}^3 (2(m-2)d_jx_j+4-m)^2\equiv 3 \pmod{8}.
\]
This is a contradiction, and hence $\Theta_{\pgen(X)}(z)=\Theta_{\pspn(X)}(z)$. 
\end{proof}
We next consider those $\bm{d}$ with $d_j$ not necessarily odd.
%
%
%


\begin{proposition}\label{proposition:unarypartford:general}
	Let $\bm{d'}\in (\cup_{j=0}^{\infty}\mathcal{S}_{2^j})^3$ and write $\bm{d'}=\bm{d}\cdot (2^{a_1},2^{a_2},2^{a_3})$ with $\bm{d}\in\mathcal{S}_1^3$ and $\bm{a}=(a_1,a_2,a_3)\in\Z_{\ge 0}^3$. Assuming that $r_{\pgen(X^{\bm{d}})}(h)>0$, define the numbers $c_{\bm{a},\bm{d}}(h)$ by
	\[
	r_{\pspn(X^{\bm{d'}})}(h)-r_{\pgen(X^{\bm{d'}})}(h)=c_{\bm{a},\bm{d}}(h)\cdot r_{\pgen(X^{\bm{d}})}(h).
	\]
	Then we have a bound $|c_{\bm{a},\bm{d}}(h)| \le 2^{-a_1-a_2-a_3+\min_j\{a_j\}}$. Moreover, $c_{\bm{a},\bm{d}}(h)\neq 0$ only if the following are satisfied for each prime $p$ dividing the squrefree part $\text{sf}(h)$ of $h$.
	\begin{enumerate}[label={\rm (\arabic*)}]
		\item $p\nmid 3(m-2)(m-4)$, $\#\{j : p \mid d_j\} = 2$, and $\legendre{2}{p}=1$.
		\item Otherwise, $p=3$, and either $3\mid (m-2)$ or $\#\{j : 3 \mid d_j\} = 3$.
	\end{enumerate}
\end{proposition}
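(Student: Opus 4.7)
The plan is to apply the structural result of \cite[Theorem 5.6]{KaneKim}, which asserts that $\Theta_{\pspn(X^{\bm{d'}})}(z)-\Theta_{\pgen(X^{\bm{d'}})}(z)$ lies in the span of unary theta functions and that its $h$-th Fourier coefficient is supported only on square classes $t\Z^2$ satisfying both $2$-adic local representability by $X^{\bm{d'}}$ and the splitting condition \eqref{condition-splittingintegers} at every prime for $E=\Q(\sqrt{-td_L})$. I will then combine this with the spinor norm computations of Proposition \ref{localspinornorm:oddprime} to deduce the support conditions, and with the Siegel formula \eqref{eqn:Siegelformula} together with the local density computation at $p=2$ from Lemma \ref{lemma:localdensity:at2} to extract the magnitude bound.

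For the non-vanishing criteria, I focus on the square class containing $h$, namely $t=\sf(h)$. At each odd prime $p\mid\sf(h)$, the extension $E_\mathfrak{p}/\Q_p$ is ramified, so $N_{E_\mathfrak{p}/\Q_p}(E_\mathfrak{p}^\times)$ is an index-$2$ subgroup of $\Q_p^\times$ not containing $\Z_p^\times(\Q_p^\times)^2$; hence the splitting condition $\spnnorm{X^{\bm{d'}}_p}\subseteq N_{E_\mathfrak{p}/\Q_p}(E_\mathfrak{p}^\times)$ forces $\spnnorm{X^{\bm{d'}}_p}=(\Q_p^\times)^2$. Proposition \ref{localspinornorm:oddprime} then restricts the admissible configurations at each $p$. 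Combining this with the observation that for $p$ odd with $p\mid(m-2)$ and $p\neq 3$ we necessarily have $p\nmid(m-4)$ (since $\gcd(m-2,m-4)\mid 2$) and hence $h\equiv 3(m-4)^2\not\equiv 0\pmod p$, which excludes $p\mid\sf(h)$ in that case, I will recover exactly conditions (1) and (2) in the statement.

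For the magnitude bound, I first compute the ratio of genus counts via \eqref{eqn:Siegelformula}. The discriminant factor contributes $\sqrt{d_{L^{\bm{d}}}/d_{L^{\bm{d'}}}}=2^{-(a_1+a_2+a_3)}$, and Lemma \ref{lemma:localdensity:at2} gives $b_2(h,\lambda_{\bm{d'}},0)/b_2(h,\lambda_{\bm{d}},0)=2^{\min_j\{a_j\}}$; the local density ratios at odd primes are trivial since the $2$-adic parts of the $d_j$'s do not enter. This yields
\[
r_{\pgen(X^{\bm{d'}})}(h) = 2^{-(a_1+a_2+a_3)+\min_j\{a_j\}}\cdot r_{\pgen(X^{\bm{d}})}(h).
\]
The claimed inequality $|c_{\bm{a},\bm{d}}(h)|\leq 2^{-(a_1+a_2+a_3)+\min_j\{a_j\}}$ then reduces to $|r_{\pspn(X^{\bm{d'}})}(h)-r_{\pgen(X^{\bm{d'}})}(h)|\leq r_{\pgen(X^{\bm{d'}})}(h)$. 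I plan to obtain this from the non-negativity of spinor genus representations together with an equal-mass decomposition of the proper genus into proper spinor genera, provable via \eqref{eqn:numberofproperspinorgenera} and the group-theoretic structure of local spinor norms, which constrains the number of proper spinor genera to be a power of $2$ with equal weights and gives $r_{\pspn}\leq 2r_{\pgen}$.

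The principal obstacle will be the detailed $2$-adic analysis: Proposition \ref{localspinornorm:prime2} only covers squarefree $\bm{d}$, so extending the local spinor norm computation to $\bm{d'}$ with nontrivial $2$-power part requires care in tracking the changing local structure of $X^{\bm{d'}}_2$, as does verifying that the equal-mass property for proper spinor genera indeed persists for these $2$-power scaled cosets.
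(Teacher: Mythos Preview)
Your approach is essentially the same as the paper's, but the ``principal obstacle'' you anticipate is illusory and you can remove it entirely with two observations.

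First, for the non-vanishing criteria you never need a $2$-adic spinor norm computation for $X^{\bm{d'}}$. Since $h\equiv 3\pmod 8$ we have $2\nmid\sf(h)$, so at $p=2$ the extension $E/\Q_2$ is either split or unramified and $N_{E_\mathfrak{p}/\Q_2}(E_\mathfrak{p}^\times)\supseteq\Z_2^\times(\Q_2^\times)^2$; the splitting condition \eqref{condition-splittingintegers} at $p=2$ is therefore vacuous. At every \emph{odd} prime $p$ the $2$-power factors $2^{a_j}$ are units in $\Z_p$, so the local coset satisfies $X^{\bm{d'}}_p=X^{\bm{d}}_p$ literally, whence $\spnnorm{X^{\bm{d'}}_p}=\spnnorm{X^{\bm{d}}_p}$ and Proposition \ref{localspinornorm:oddprime} applies directly with no extension needed. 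This is exactly how the paper proceeds.

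Second, for the magnitude bound the paper simply invokes the general inequality $0\le r_{\pspn(X)}(h)\le 2\,r_{\pgen(X)}(h)$ for ternary cosets $X$, which is a standard consequence of the spinor character theory: the difference $r_{\pspn}-r_{\pgen}$ at a fixed $h$ is governed by a single quadratic character on the group of proper spinor genera (indexed by the square class of $h$), so $r_{\pspn}(h)=r_{\pgen}(h)(1\pm c)$ with $c\ge 0$, and non-negativity of the other spinor genus count forces $c\le 1$. You do not need to verify any special ``equal-mass'' property for the particular $2$-power scaled cosets; the bound is structural. With this in hand your ratio computation via \eqref{eqn:Siegelformula} and Lemma \ref{lemma:localdensity:at2} finishes the argument exactly as you outlined.
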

\begin{proof}
	Note that in general one has
	\[
	0\le r_{\pspn(X)}(h) \le 2\cdot r_{\pgen(X)}(h)
	\]
	for any ternary coset $X$ so that we have
	\[
	|c_{\bm{a},\bm{d}}(h)|\le \frac{r_{\pgen(X^{\bm{d'}})}(h)}{r_{\pgen(X^{\bm{d}})}(h)}.
	\]
	On the other hand, \eqref{eqn:Siegelformula} and Lemma \ref{lemma:localdensity:at2} implies
	\begin{equation}\label{eqn:rgenratio}
		\frac{r_{\pgen(X^{\bm{d'}})}(h)}{r_{\pgen(X^{\bm{d}})}(h)} 
		=\frac{d_{L^{\bm{d'}}}^{-\frac{1}{2}}\cdot b_2(h,\lambda_{\bm{d'}},0)}{d_{L^{\bm{d}}}^{-\frac{1}{2}}\cdot b_2(h,\lambda_{\bm{d}},0)}
		=\frac{1}{2^{a_1+a_2+a_3}}\cdot \frac{2^{2+\min_j\{a_j\}}}{2^{2+0}}=2^{-(a_1+a_2+a_3)+\min_j \{a_j\}}.
	\end{equation}
	This proves the first part of the proposition.
	
	To prove the second part, assume that $c_{\bm{a},\bm{d}}(h) \neq 0$. We note by \cite[Theorem 5.6]{KaneKim} that $c_{\bm{a},\bm{d}}(h) \neq 0$ only if some integer in $\text{sf}(h)\Z^2$ is locally represented by $X^{\bm{d'}}$ and satisfying \eqref{condition-splittingintegers} with $E=\Q\left(\sqrt{-hd_{L^{\bm{d'}}}}\right)=\Q\left(\sqrt{-\text{sf}(h)}\right)$.
	Noting that $h$ is an odd integer and 
	\begin{center}
		$N_{E_\mathfrak{p}/\Q_p}(E_\mathfrak{p}^\times)=\Q_p^\times$ for any $p\nmid \text{sf}(h)$, and $\Z_p^\times \not\subseteq N_{E_\mathfrak{p}/\Q_p}(E_\mathfrak{p}^\times)$ for any $p \mid \text{sf}(h)$
	\end{center} 
	for $p\neq 2$, we have $c_{\bm{a},\bm{d}}(h)\neq 0$ only if $\spnnorm{X^{\bm{d'}}_p}=(\Q_p^\times)^2$ for all $p\mid \text{sf}(h)$.
	Noting further that $\spnnorm{X^{\bm{d'}}_p}=\spnnorm{X^{\bm{d}}_p}$ for any odd prime numbers $p$, it follows from Proposition \ref{localspinornorm:oddprime} that $\bm{d}$ and $m$ should satisfy the following for each prime divisor $p\mid \text{sf}(h)$:
	\begin{enumerate}
		\item[(a)] $\#\{j : p \mid d_j\} = 2$, $p\nmid (m-4)$, $\legendre{2}{p}=1$ if $p\neq3$ and $p\nmid (m-2)$.
		\item[(b)] $\#\{j : p \mid d_j\} = 1 \text{ or } 2$ if $p\neq3$ and $p\mid (m-2)$.
		\item[(c)] If $3\mid \text{sf}(h)$, then either
		\begin{itemize}
			\item $3\mid (m-2)$ and $\#\{j : 3 \mid d_j\} = 0,1,2, \text{ or }3$, or
			\item $3\nmid (m-2)$, and either $\#\{j : 3 \mid d_j\} = 3$ or $\#\{j : 3 \mid d_j\} = 2$ with $3\nmid m-4$, $\legendre{2}{3}=1$.
		\end{itemize}
	\end{enumerate}
	The condition (b) cannot happen since $p\mid h$ and $p\mid (m-2)$ implies that $p \mid 3(m-4)^2(=h-8(m-2)n)$, which yields $p=3$ since $\gcd(m-2,m-4)=1$. The condition (c) may be summarized as the condition (2) in the statement of the proposition.
\end{proof}

\section{Error from the cuspidal part}\label{sec:cuspform}
In this section, we obtain a bound on the contribution from the cuspidal part that is orthogonal to unary theta functions. For $k\in \Z+\frac{1}{2}$, let $S_{k}^{\perp}(\Gamma,\chi)$ denote the subspace of $S_{k}(\Gamma,\chi)$ which is orthogonal to unary theta functions. 

We follow the proof from \cite{BanerjeeKane}, with the results from Schulze-Pillot and Yenirce \cite{SPY} replaced with bounds from the appendix of \cite{BHM}.
\begin{lemma}\label{lem:cf(n)<norm}
 Suppose that $k\in\Z+\frac{1}{2}$ and $f\in S_{k}^{\perp}(\Gamma_0(N)\cap\Gamma_1(L),\psi)$ with $L\mid N$ and $\psi$ a character modulo $N$. If $f$ has the Fourier expansion $f(\tau)=\sum_{n\geq 1} c_f(n) q^n,$ then 
\begin{equation}\label{eqn:afbnd}
\left|c_f(n)\right|\ll_{k,\varepsilon} \|f\| N^{\frac{25}{16}+\varepsilon} n^{\frac{k-1}{2} + \frac{103}{512}+\varepsilon}\varphi(L)^{\frac{1}{2}}.
\end{equation}

In particular, for $k=\frac{3}{2}$,we have 
\begin{equation}\label{eqn:k=3/2spec}
\left|c_{f}(n)\right|\ll_{\varepsilon} \|f\| N^{\frac{25}{16}+\varepsilon} n^{\frac{231}{512}+\varepsilon}\varphi(L)^{\frac{1}{2}}.
\end{equation}
\end{lemma}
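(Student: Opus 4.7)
My plan is to adapt the argument in \cite{BanerjeeKane} essentially verbatim, substituting the Fourier coefficient bound they use from \cite{SPY} with the sharper bound from the appendix of \cite{BHM}. The specialization \eqref{eqn:k=3/2spec} is immediate from \eqref{eqn:afbnd} by setting $k=\tfrac{3}{2}$ (using $\tfrac{k-1}{2}+\tfrac{103}{512}=\tfrac{128+103}{512}=\tfrac{231}{512}$), so only \eqref{eqn:afbnd} requires work.

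The main steps are as follows. First, fix an orthonormal basis $\{g_i\}$ of $S_k^{\perp}(\Gamma_0(N)\cap\Gamma_1(L),\psi)$ built from Hecke newforms of conductor dividing $N$ together with orthonormalized oldform lifts. Writing $f=\sum_i\langle f,g_i\rangle g_i$ and using Parseval's identity $\sum_i|\langle f,g_i\rangle|^2=\|f\|^2$, Cauchy--Schwarz applied to the Fourier expansions gives
\[
|c_f(n)|\le \|f\|\left(\sum_i |c_{g_i}(n)|^2\right)^{1/2}.
\]
For each eigenform $g_i$, the bound from the appendix of \cite{BHM} yields an estimate of the shape
\[
|c_{g_i}(n)|\ll_{k,\varepsilon}\; N^{\alpha+\varepsilon}\,n^{(k-1)/2+103/512+\varepsilon},
\]
for an explicit $\alpha$; the subconvex exponent $\tfrac{103}{512}$ past the trivial $n^{(k-1)/2}$ bound arises from combining the Kohnen--Zagier/Waldspurger formula with current best subconvexity bounds for central values of twisted $L$-functions. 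Squaring this bound, summing over a basis of size $\ll_\varepsilon N^{1+\varepsilon}\varphi(L)$ (the $\varphi(L)$ accounting for the orbit of $\psi$ under characters modulo $L$), and taking a square root produces the combined factor $N^{25/16+\varepsilon}\varphi(L)^{1/2}$, where the exponent $\tfrac{25}{16}$ emerges as $\alpha+\tfrac{1}{2}$.

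The main obstacle, beyond checking that the various $\varepsilon$-losses combine correctly, is bookkeeping of the level dependence: one must track the precise exponent $\alpha$ of the level as eigenforms lift from divisors $N'\mid N$ to level $N$ and renormalize in Petersson norm, and verify that the \cite{BHM} appendix bound, typically stated for newforms on $\Gamma_0(N)$ with fixed central character, transfers faithfully to the group $\Gamma_0(N)\cap\Gamma_1(L)$ at the cost of only the displayed $\varphi(L)^{1/2}$ factor (which should follow by decomposing the space by characters modulo $L$ and applying the \cite{BHM} input eigencomponent-wise). Once the BHM bound is in hand with the exponent $\tfrac{103}{512}$ and the asserted level power $\alpha$, the remainder of the argument is routine and parallels \cite{BanerjeeKane} line-by-line.
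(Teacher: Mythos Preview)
Your decomposition by characters modulo $L$ and the Cauchy--Schwarz step are exactly right, and the $\varphi(L)^{1/2}$ factor arises precisely as you say. However, the intermediate passage to a full Hecke eigenbasis is both unnecessary and harmful for the stated level exponent. The input from \cite[Theorem~6]{BHM} is stated for an \emph{arbitrary} Petersson-normalized cusp form $g\in S_k^{\perp}(\Gamma_0(N),\chi)$, not only for eigenforms, and it already delivers the exponent $N^{25/16+\varepsilon}$. The paper therefore writes $f=\sum_{\chi\pmod L} f_\chi$ with $f_\chi\in S_k^{\perp}(\Gamma_0(N),\psi\chi)$ via the orthogonal splitting $S_k^{\perp}(\Gamma_0(N)\cap\Gamma_1(L),\psi)=\bigoplus_{\chi\pmod L}S_k^{\perp}(\Gamma_0(N),\psi\chi)$, applies \cite[Theorem~6]{BHM} to each normalized piece $f_\chi/\|f_\chi\|$, and then Cauchy--Schwarz over the $\varphi(L)$ characters together with $\|f\|^2=\sum_\chi\|f_\chi\|^2$ gives \eqref{eqn:afbnd} directly.

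By contrast, applying the BHM bound to each of $\asymp N^{1+\varepsilon}\varphi(L)$ orthonormal eigenbasis elements and summing would produce $\alpha+\tfrac{1}{2}$ with $\alpha=\tfrac{25}{16}$, hence $N^{33/16+\varepsilon}$ rather than $N^{25/16+\varepsilon}$. Your claim that $\tfrac{25}{16}$ ``emerges as $\alpha+\tfrac{1}{2}$'' would require $\alpha=\tfrac{17}{16}$ for a single normalized eigenform, which is not what \cite{BHM} supplies. The newform/oldform level bookkeeping you flag as the main obstacle is thus entirely avoided in the paper's argument: no eigenbasis is ever constructed, and the only decomposition is by character.
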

\begin{proof}
By \cite[Theorem 6]{BHM}, if $g\in S_k^{\perp}\left(\Gamma_0(N),\chi\right)$ is normalized with respect to the Petersson inner product, then the $n$-th coefficient $c_g(n)$ of its Fourier expansion satisfies
\[
\left|c_{g}(n)\right|\ll_{k} N^{\frac{25}{16}+\varepsilon} n^{\frac{k-1}{2}+\frac{1}{4}-\frac{1}{16}+\frac{\vartheta}{8}+\varepsilon},
\]
where $\vartheta\geq 0$ is any admissible constant, with $\vartheta=\frac{7}{64}$ known to hold. Hence 
\begin{equation}\label{eqn:cgnbound}
\left|c_{g}(n)\right|\ll_{k,\varepsilon} N^{\frac{25}{16}+\varepsilon} n^{\frac{k-1}{2}+\frac{103}{512}+\varepsilon},
\end{equation}
Since (see \cite[Theorem 2.5]{Cho} for the integral-weight case, which generalizes directly) 
\begin{equation}\label{eqn:chisplitting}
S_{k}^{\perp}(\Gamma_0(N)\cap\Gamma_1(L),\psi)=\bigoplus_{\chi\pmod{L}} S_{k}^{\perp}(\Gamma_0(N),\psi\chi),
\end{equation}
 we have
\[
f=\sum_{\chi\pmod{L}} f_{\chi}
\]
with $f_{\chi}\in S_{k}^{\perp}(\Gamma_0(N),\psi\chi)$. Note that since the splitting in \eqref{eqn:chisplitting} is orthogonal with respect to the Petersson inner product, we have
\begin{equation}\label{eqn:normfsplit}
\left\|f\right\|^2=\sum_{\chi\pmod{L}} \left\|f_{\chi}\right\|^2.
\end{equation}
Since $\frac{f_{\chi}}{\|f_{\chi}\|}$ has norm $1$ under the Petersson inner product, \eqref{eqn:cgnbound} with $g=\frac{f_{\chi}}{\|f_{\chi}\|}$ implies that 
\begin{equation}\label{eqn:cfchibound}
\left|c_{\frac{f_{\chi}}{\|f_{\chi}\|}}(n)\right|\ll_{k,\varepsilon} N^{\frac{25}{16}+\varepsilon} n^{\frac{k-1}{2} + \frac{103}{512}+\varepsilon}.
\end{equation}
Using the triangle inequality and Cauchy--Schwartz inequality and combining with \eqref{eqn:normfsplit} and \eqref{eqn:cfchibound} yields the claim.\qedhere
\begin{extradetails}
We conclude that 
\begin{align*}
\left|c_f(n)\right|&\leq \sum_{\chi\pmod{L}} |c_{f_{\chi}}(n)|=\sum_{\chi\pmod{L}}\left\|f_{\chi}\right\|  \left|c_{\frac{f_{\chi}}{\|f_{\chi}\|}}(n)\right|\\
&\leq \left(\sum_{\chi\pmod{L}}\left\|f_{\chi}\right\|^2\right)^{\frac{1}{2}}\left(\sum_{\chi\pmod{L}}\left|c_{\frac{f_{\chi}}{\|f_{\chi}\|}}(n)\right|^2\right)^{\frac{1}{2}}\\
&\overset{\eqref{eqn:normfsplit}}{=} \|f\|\left(\sum_{\chi\pmod{L}}\left|c_{\frac{f_{\chi}}{\|f_{\chi}\|}}(n)\right|^2\right)^{\frac{1}{2}}\overset{\eqref{eqn:cfchibound}}{\ll_{k,\varepsilon}} \|f\| N^{\frac{25}{16}+\varepsilon} n^{\frac{k-1}{2} + \frac{103}{512}+\varepsilon}\varphi(L)^{\frac{1}{2}}.
\end{align*}
\end{extradetails}
\end{proof}

We next bound $\|f\|$ for $f$ equal to the projection of $\Theta_{X^{\bm{d}}}$ to $S_{\frac{3}{2}}^{\perp}\left(\Gamma_{0}(N_{m,\bm{d}})\cap\Gamma_1(M_{m,\bm{d}})\right)$, where 
\begin{align*}
N_{m,\bm{d}}&:= 16(m-2)^2\lcm(d_1,d_2,d_3)^2,\\
M_{m,\bm{d}}&:=\frac{2(m-2)\lcm\left(d_1,d_2,d_3\right)}{\gcd\left(m-4,\lcm(d_1,d_2,d_3)\right)}. 
\end{align*}
\begin{lemma}\label{lem:OrthogonalBound}
Suppose that $f$ is the projection of $\Theta_{X^{\bm{d}}}$ to 
\[
S_{\frac{3}{2}}^{\perp}\left(\Gamma_{0}(N_{m,\bm{d}})\cap\Gamma_1(M_{m,\bm{d}})\right).
\]
Then 
\[
\|f\|^2\ll N_{m,\bm{d}}^2 M_{m,\bm{d}}^{4+\varepsilon}.
\]
and 
\[
\left|c_{f}(n)\right|\ll_{\varepsilon}  N_{m,\bm{d}}^{\frac{41}{16}+\varepsilon} M_{m,\bm{d}}^{\frac{5}{2}+\varepsilon} n^{\frac{231}{512}+\varepsilon}.
\]
\end{lemma}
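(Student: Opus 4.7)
The bound on Fourier coefficients is an immediate consequence of the norm bound combined with Lemma \ref{lem:cf(n)<norm}. Substituting $k=\tfrac{3}{2}$, $N = N_{m,\bm{d}}$, and $L = M_{m,\bm{d}}$ into \eqref{eqn:k=3/2spec}, and bounding $\varphi(M_{m,\bm{d}})^{1/2} \le M_{m,\bm{d}}^{1/2}$, yields
$$|c_f(n)| \ll_\varepsilon \|f\|\, N_{m,\bm{d}}^{25/16+\varepsilon} M_{m,\bm{d}}^{1/2} n^{231/512+\varepsilon}.$$
Substituting $\|f\| \ll N_{m,\bm{d}} M_{m,\bm{d}}^{2+\varepsilon}$ then gives exactly the stated estimate $|c_f(n)| \ll_\varepsilon N_{m,\bm{d}}^{41/16+\varepsilon} M_{m,\bm{d}}^{5/2+\varepsilon} n^{231/512+\varepsilon}$.

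The substantive task is the norm bound, and the first step is to identify $f$ concretely. By Proposition \ref{prop-thetaofcoset-is-a-modularform}, $\Theta_{X^{\bm{d}}}$ is a modular form of weight $\tfrac{3}{2}$ on $\Gamma_0(N_{m,\bm{d}}) \cap \Gamma_1(M_{m,\bm{d}})$. Theorem \ref{thm:ShimuraThm1.5} identifies its Eisenstein component as $\Theta_{\pgen(X^{\bm{d}})}$, and \cite[Theorem 6.2]{KaneKim} shows that the projection of $\Theta_{X^{\bm{d}}} - \Theta_{\pgen(X^{\bm{d}})}$ onto the unary theta subspace coincides with $\Theta_{\pspn(X^{\bm{d}})} - \Theta_{\pgen(X^{\bm{d}})}$. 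Hence the cuspidal-orthogonal-to-unary projection is $f = \Theta_{X^{\bm{d}}} - \Theta_{\pspn(X^{\bm{d}})}$, and Pythagoras applied to the orthogonal decomposition
$$\Theta_{X^{\bm{d}}} - \Theta_{\pgen(X^{\bm{d}})} = f + \bigl(\Theta_{\pspn(X^{\bm{d}})} - \Theta_{\pgen(X^{\bm{d}})}\bigr)$$
gives $\|f\|^2 \le \|\Theta_{X^{\bm{d}}} - \Theta_{\pgen(X^{\bm{d}})}\|^2$, replacing the problem by one about the Petersson norm of a genuine cusp form.

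This Petersson norm is then bounded via a Rankin--Selberg unfolding argument following the template in \cite{BanerjeeKane}. The coefficients of the cusp form $g := \Theta_{X^{\bm{d}}} - \Theta_{\pgen(X^{\bm{d}})}$ satisfy $|c_g(n)| \le r_{X^{\bm{d}}}(n) + r_{\pgen(X^{\bm{d}})}(n) \ll_\varepsilon n^{1/2+\varepsilon}$, where the first term uses the classical bound on ternary representations and the second uses Theorem \ref{thm:ShimuraThm1.5} together with a divisor-sum estimate on $L(1,\psi_h)$ of the type seen in Lemma \ref{lem:X1lower}. Expressing $\|g\|^2$ as a residue of the Rankin--Selberg Dirichlet series $\sum_n |c_g(n)|^2 n^{-s}$ at $s = \tfrac{3}{2}$ and tracking both the index $[\SL_2(\Z) : \Gamma_0(N_{m,\bm{d}}) \cap \Gamma_1(M_{m,\bm{d}})] \ll N_{m,\bm{d}}^{1+\varepsilon} M_{m,\bm{d}}$ and the character decomposition \eqref{eqn:chisplitting} produces the claimed $\|f\|^2 \ll N_{m,\bm{d}}^2 M_{m,\bm{d}}^{4+\varepsilon}$.

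The main obstacle is producing the sharp exponents. The factor $N_{m,\bm{d}}^2$ arises from the square of the volume contribution in the Rankin--Selberg unfolding, while $M_{m,\bm{d}}^{4+\varepsilon}$ aggregates the contribution of the characters modulo $M_{m,\bm{d}}$ with the discriminant-dependent gamma factors appearing in the residue calculation. Care is needed to verify that the trivial coefficient bound $n^{1/2+\varepsilon}$, rather than any nontrivial subconvex bound, suffices at every step, and that the Eisenstein-series contribution $\Theta_{\pgen(X^{\bm{d}})}$ does not inflate the norm beyond the target exponents; the precise arithmetic of this bookkeeping is the delicate part, and closely parallels the corresponding estimate in \cite{BanerjeeKane}.
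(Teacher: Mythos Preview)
Your derivation of the Fourier coefficient bound from the norm bound via Lemma~\ref{lem:cf(n)<norm} is correct and matches the paper. The identification of $f$ with $\Theta_{X^{\bm d}}-\Theta_{\pspn(X^{\bm d})}$ and the Pythagorean reduction to bounding $\|\Theta_{X^{\bm d}}-\Theta_{\pgen(X^{\bm d})}\|$ are also fine.

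The gap is in the norm bound itself. What you describe as ``Rankin--Selberg unfolding using the trivial bound $|c_g(n)|\ll n^{1/2+\varepsilon}$'' does not by itself produce the exponents $N^2 M^{4+\varepsilon}$. A Rankin--Selberg identity on $\Gamma_0(N)\cap\Gamma_1(M)$ expresses $\|g\|^2$ as a constant over the index times a residue (or, after smoothing, a sum $\sum_n |c_g(n)|^2 w(n/N)$ with exponential cutoff around $n\asymp N$); plugging in only $|c_g(n)|^2\ll n^{1+\varepsilon}$ and the index $\asymp NM$ gives no obvious route to $N^2 M^{4+\varepsilon}$, and indeed the residue is not controlled by pointwise coefficient bounds alone. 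Your final paragraph effectively concedes this by deferring the ``precise arithmetic'' to \cite{BanerjeeKane}.

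The paper's actual argument is Blomer's method (via \cite[Lemma~3.2]{KKT}), which is structurally different from what you sketch. One expands the theta function at \emph{every} cusp of $\Gamma(N)$ and writes $\|f\|^2$ as in \eqref{eqn:normf111}: a sum over cusps $j$ weighted by $\prod_{d=1}^{3}\gcd(M^2 b_d,c_j)$, times an inner sum $\sum_n n^{3}(N/4\pi n)^{1/2}\Gamma(\tfrac12,2\pi n/N)$. The inner sum is bounded by $N^4$; the factor $N^5/\mu(N)\asymp N^2$ gives the $N^2$; and the cusp sum, after grouping cusps by $\delta=\gcd(M^2 N_Q,c_j)$ and using the cusp count $\varphi(M^2N_Q/\delta)\varphi(\delta)M^2N_Q/\delta$, furnishes the $M^{4+\varepsilon}$. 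The crucial input is the explicit transformation of the theta series at each cusp (encoded in the $\gcd$ weights), not a uniform coefficient bound at $i\infty$. Your sketch is missing this cusp-by-cusp analysis, which is where both exponents genuinely come from.
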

\begin{proof}
Using an argument of Blomer \cite{Blomer}, we follow the proof of \cite[Lemma 3.2]{KKT}, modifying for $\ell=3$ odd and choosing $N=N_{m,\bm{d}}$ and $M=M_{m,\bm{d}}$.

We may follow the proof mutatis mutandis until \cite[(3.18)]{KKT}. Combining \cite[(3.12)]{KKT} with \cite[(3.18)]{KKT}, we have (note that, in the notation from \cite{KKT}, $\Delta_Q=8$ is fixed for $Q\cong \left<1,1,1\right>$) 
\begin{equation}\label{eqn:normf111}
\|f\|^2 \ll \frac{N}{\mu(N)}M^{-6}\sum_{j=1}^{\mu(N)/N}\prod_{d=1}^3\gcd\left(M^2b_d,c_j\right) \sum_{n=1}^{\infty} n^3 \left(\frac{N}{4\pi n}\right)^{\frac{1}{2}}\Gamma\left(\frac{1}{2},\frac{2\pi n}{N}\right).
\end{equation}
where $\mu(N):=\left[\SL_2(\Z):\Gamma(N)\right]$. Here $\prod_{d=1}^3 b_d=\Delta_Q$ and $c_j$ is the denominator of the $j$-th cusp with $\gcd(c_j,M^2N_Q)=\delta$.

 Since $\Gamma(\frac{1}{2},z^2)=\sqrt{\pi}\erfc(z)$ by \cite[8.4.6]{NIST} and \cite[7.12.1]{NIST} (together with $\erfc(z)\sim 1$ for $z\to 0$), the inner sum may be bounded by 
\begin{multline*}
\sum_{n=1}^{\infty} n^3 \left(\frac{N}{4\pi n}\right)^{\frac{1}{2}}\Gamma\left(\frac{1}{2},\frac{2\pi n}{N}\right)=\sqrt{\pi}\sum_{n=1}^{\infty} n^3 \left(\frac{N}{4\pi n}\right)^{\frac{1}{2}}\erfc\left(\sqrt{\frac{2\pi n}{N}}\right)\\
\ll\sum_{n=1}^{N} n^3 \left(\frac{N}{4\pi n}\right)^{\frac{1}{2}}+ \sum_{n=N}^{\infty} n^3 \left(\frac{N}{4\pi n}\right)^{\frac{1}{2}}\left(\left(\frac{N}{2\pi n}\right)^{\frac{1}{2}}e^{-\frac{2\pi n}{N}}+O\left(\left(\frac{N}{n}\right)^{\frac{3}{2}}e^{-\frac{2\pi n}{N}}\right)\right)\\
\ll N^4+  N\sum_{n=N}^{\infty} n^2e^{-\frac{2\pi n}{N}}+O\left( N^2\sum_{n=N}^{\infty} ne^{-\frac{2\pi n}{N}}\right).
\end{multline*}
Using \cite[Lemma 2.6]{BanerjeeKane}, we obtain
\[
\sum_{n=1}^{\infty} n^3 \left(\frac{N}{4\pi n}\right)^{\frac{1}{2}}\Gamma\left(\frac{1}{2},\frac{2\pi n}{N}\right)\ll N^4
\]
Plugging this into \eqref{eqn:normf111} yields 
\[
\|f\|^2 \ll \frac{N^5}{\mu(N)}M^{-6}\sum_{j=1}^{\mu(N)/N}\prod_{d=1}^3\gcd\left(M^2b_d,c_j\right).
\]
Using $\delta=\gcd(M^2N_Q,c_j)$, we then bound 
\[
\gcd\left(M^2 b_d,c_j\right)\leq b_d\gcd\left(M^2,c_j\right)= b_d \gcd\left(M^2,\delta\right). 
\]
Hence, using $\prod_{d=1}^3 b_d=\Delta_Q=8$, we have 
\begin{multline*}
\|f\|^2 \ll \frac{N^5}{\mu(N)}\frac{\Delta_Q}{M^6}  \sum_{\substack{1\leq j\leq \mu(N)/N\\ \gcd(M^2N_Q,c_j)=\delta}}\gcd\left(M^2,\delta\right)^3\ll \frac{N^5}{\mu(N)} \sum_{\delta\mid M^2N_Q} \sum_{\substack{1\leq j\leq \mu(N)/N\\ \gcd(M^2N_Q,c_j)=\delta}}\left(\frac{\gcd\left(M^2,\delta\right)}{M^2}\right)^3\\
=\frac{N^5}{\mu(N)} \sum_{\delta\mid M^2N_Q} \varphi\left(\frac{M^2N_Q}{\delta}\right)\varphi(\delta)\frac{M^2N_Q}{\delta}\left(\frac{\gcd\left(M^2,\delta\right)}{M^2}\right)^3
\end{multline*}
where in the last step we used the fact that the number of cusps with $\gcd(M^2N_q,c_j)=\delta$ is (for example, see \cite[p. 12, displayed formula after (4.4)]{Blomer})
\[
\varphi\left(\frac{M^2N_Q}{\delta}\right)\varphi(\delta)\frac{M^2N_Q}{\delta}.
\]
We now recall that (for example, see \cite[(2.2)]{Blomer})
\[
\mu(N)=N^3\prod_{p\mid N} \left(1-\frac{1}{p^2}\right),
\]
yielding 
\begin{multline*}
\|f\|^2\ll \frac{N^2}{\prod_{p\mid N}\left(1-\frac{1}{p^2}\right)} \sum_{\delta\mid M^2N_Q} \varphi\left(\frac{M^2N_Q}{\delta}\right)\varphi(\delta)\frac{M^2N_Q}{\delta}\left(\frac{\gcd\left(M^2,\delta\right)}{M^2}\right)^3\\
\ll N^2\sum_{\delta\mid M^2N_Q} \varphi\left(\frac{M^2N_Q}{\delta}\right)\varphi(\delta)\frac{M^2N_Q}{\delta}\left(\frac{\gcd\left(M^2,\delta\right)}{M^2}\right)^3,
\end{multline*}
bounding against $\zeta(2)$ in the last step. Since $\frac{\gcd(M^2,\delta)}{M^2}\leq 1$, we may then bound 
\begin{multline*}
\sum_{\delta\mid M^2N_Q} \varphi\left(\frac{M^2N_Q}{\delta}\right)\varphi(\delta)\frac{M^2N_Q}{\delta}\left(\frac{\gcd\left(M^2,\delta\right)}{M^2}\right)^3\leq \sum_{\delta\mid M^2N_Q} \varphi\left(\frac{M^2N_Q}{\delta}\right)\varphi(\delta)\frac{M^2N_Q}{\delta}\\
\leq M^2N_Q\varphi\left(M^2N_Q\right)\sigma_{-1}\left(M^2N_Q\right)\ll_{\varepsilon} M^{4+\varepsilon}
\end{multline*}
This gives the first statement. Plugging this into \eqref{eqn:k=3/2spec} with $L=M$ thus yields
\[
\left|c_{f}(n)\right|\ll_{\varepsilon}  N^{\frac{41}{16}+\varepsilon} n^{\frac{231}{512}+\varepsilon} M^{\frac{5}{2}+\varepsilon}
\]
This is the second statement.
\end{proof}

\section{Main term and error term from sieving}\label{sec:mainerror}

For $n\in\N$ and $\bm{\ell}\in\N^3$, let $\mathcal{A}_{\bm{\ell}}$ be the set of solutions $\bm{x}\in\Z^3$ with $\ell_j\mid x_j$ to 
\[
\sum_{j=1}^3p_m\left(x_j\right)=n.
\]
As noted in \eqref{eqn:sum3squaresCongruence}, the set $\mathcal{A}_{\bm{\ell}}$ is in one-to-one correspondence with solutions to 
\[
X_1^2+X_2^2+X_3^2=h,
\]
with $X_j\equiv 4-m\pmod{2(m-2)\ell_j}$ and $h=8(m-2)n+3(m-4)^2$. Let $\mathcal{P}$ be a finite set of odd primes and for $\bm{\ell}\in\N^3$ and $n\in\N$ let $S_{h}(\mathcal{A}_{\bm{\ell}},\mathcal{P})$ denote the size of the set 
\begin{equation}\label{eqn:ShPdef}
\mathscr{S}_{h}(\mathcal{A}_{\bm{\ell}},\mathcal{P}):=\left\{\bm{x}\in\Z^3: \sum_{j=1}^3 p_m(\ell_j x_j)=n,\ p\mid x_j\implies p\notin \mathcal{P}\right\}.
\end{equation}
 Recall that $\mathcal{S}_{2^a}$ is the set of $d\in \N$ with either $d$ odd and squarefree or $d=2^ad'$ with $d'$ odd and squarefree. We assume throughout that $a\geq 3$. We then define
\begin{equation}\label{eqn:SahPdef}
S_{a,h}(\mathcal{A}_{\bm{\ell}},\mathcal{P}):=\sum_{\bm{\alpha}\in\{0,1\}^3}\mu(2^{\alpha_1})\mu(2^{\alpha_2})\mu(2^{\alpha_3})S_{h}\left(\mathcal{A}_{2^{a\bm{\alpha}}\bm{\ell}},\mathcal{P}\right),
\end{equation}
which is the size of the set 
\[
\mathscr{S}_{a,h}\left(\mathcal{A}_{\bm{\ell}},\mathcal{P}\right):=\left\{\bm{x}\in\Z^3: \sum_{j=1}^3 p_m(\ell_j x_j)=n,\ p\mid x_j\implies p\notin \mathcal{P}\cup\{2\}\text{ or }(p=2\text{ and }\ord_p<a)\right\}.
\]
When $\mathcal{P}=\mathcal{P}_{h,z}:=\{p\leq z\}\cup \{p\mid \sf(h)\}$, we simply abbreviate by $\mathscr{S}_{h}(\mathcal{A}_{\bm{\ell}},z):=\mathscr{S}_{h}(\mathcal{A}_{\bm{\ell}},\mathcal{P}_{h,z})$ (and $S_{h}(\mathcal{A}_{\bm{\ell}},z):=S_h(\mathcal{A}_{\ell},\mathcal{P}_{h,z})$, etc.).

For $\beta,D>0$ and $d\in\mathcal{S}_{1}$ of the form $d=p_1p_2\cdots p_{r}$ with $p_1>p_2>\dots>p_r$ with $p_j$ an odd prime, we next define the \begin{it}Rosser weights\end{it} $\lambda_{d}^{\pm}=\lambda_{d,D}^{\pm}(\beta)$. Setting 
\[
y_m=y_{m}(D,\beta):=\left(\frac{D}{p_1\cdots p_m}\right)^{\frac{1}{\beta}},
\]
these are defined by
\begin{align*}
\lambda_d^+=\lambda_{d,D}^+(\beta)&:=\begin{cases} (-1)^r&\text{if }p_{2\ell+1}<y_{2\ell+1}(D,\beta)\quad \forall 0\leq \ell\leq \frac{r-1}{2},\\
0&\text{otherwise},
\end{cases}\\
\lambda_d^-=\lambda_{d,D}^-(\beta)&:=\begin{cases} (-1)^r&\text{if }p_{2\ell}<y_{2\ell}(D,\beta)\quad \forall 0\leq \ell\leq \frac{r}{2},\\
0&\text{otherwise}.
\end{cases}
\end{align*}
As is standard, we consider $D$ and $\beta$ to be fixed throughout and omit these in the notation. Treating $2^a$ as a prime, we extend the above definition for $d\in\mathcal{S}_{2^a}$ by modifying $\lambda_{2^{ab}d'}^{\pm} = \mu(2^{b})\lambda_{d'}^{\pm}$ for $2\nmid d'$. We furthermore define 
\[
\Lambda_{d}^-:=3\lambda_{d}^--2\lambda_{d}^+.
\]
For $z_0\geq 3$, write 
\[
P_{\sf(h)}(z_0):=\prod_{p\in \mathcal{P}_{h,z_0}} p=\prod_{3\leq p\leq z_0}p \prod_{\substack{p\mid \sf(h)\\ p>z_0}} p.
\]
Define $W_{a,h}(z_0):=\left(1-\frac{3}{2^a}+\frac{3}{2^{2a}}-\frac{1}{2^{2a}}\right)\prod_{p\mid P_{\sf(h)}(z_0)} \left(1-\beta_{X^{(1,1,p)},p}(h)\right)$, where $\beta_{X^{\bm{d}},p}(h)$ was defined in \eqref{eqn:defn_beta_p}, and 
\[
H(n):=\prod_{p\mid n}\left(1+ \left(\frac{1}{p(p-1)}\right)^{\frac{1}{3}}\right).
\]
We also require
\begin{multline}\label{eqn:SahETz0}
S_{a,h}^{\operatorname{ET}}\left(z_0\right):= \frac{3\cdot 2^{a}+4}{2^{2a}} \left(\sum_{\bm{c}\in \{0,1\}^3} \beta_{X^{\bm{3^{c}}},3}(h)\right)^{\delta_{\text{\tiny$\substack{3\mid\text{sf}(h),\\3\mid(m-2)}$}}}
\left(\sum_{\bm{c}=(1,1,1)} \beta_{X^{\bm{3^{c}}},3}(h)\right)^{\delta_{\text{\tiny$\substack{3\mid\text{sf}(h),\\3\nmid(m-2)}$}}}\\
\times \prod_{\substack{p\mid \text{sf}(h)\\ p\neq2,\, p\nmid 3(m-2)(m-4)}} \left(\sum_{\#\{j:c_j=1\}=2} \beta_{X^{\bm{p^{c}}},p}(h)\right)
 \prod_{\substack{p\leq z_0,\, p\nmid \text{sf}(h)\\ p\neq 2}} \left(\sum_{\bm{c}\in \{0,1\}^3} \beta_{X^{\bm{p^{c}}},p}(h)\right).
\end{multline}
\begin{proposition}\label{prop:SahMainz0}
Let $D_0$ and $z_0$ be given, and write $s_0:=\frac{\log(D_0)}{\log(z_0)}$. Suppose that all prime factors of $\bm{\ell}$ are greater than $z_0$ and do not divide $\sf(h)$. Then, for some $B,C>0$
\begin{multline*}
S_{a,h}\left(\mathcal{A}_{\bm{\ell}},z_0\right)=\bigg(W_{a,h}(z_0)+O_{\varepsilon}\Big(H(n)^{4}\Delta^{-\frac{2}{3}}\log(D_0)^6\log(\log(\sf(h)))^6 \\
+\Delta^{\varepsilon-B}\log(z_0)^2\log(\log(\sf(h)))^2+\Delta^Ce^{-s_0}\Big)\bigg)r_{\pgen(X^{\bm{\ell}})}(h)\\
+O\left(S_{a,h}^{\operatorname{ET}}\left(z_0\right)r_{\pgen(X^{\bm{\ell}})}(h)\right)+O\Bigg(\sum_{\substack{\bm{d}\in\mathcal{S}_{2^a}^3\\ p\mid d_j\implies p\leq z_0\\ \text{or }p\mid \sf(h)\\ d_j\leq D_0}}\left|r_{X^{\bm{d}\bm{\ell}}}(h)-r_{\pspn(X^{\bm{d}\bm{\ell}})}(h)\right|\Bigg).
\end{multline*}
\end{proposition}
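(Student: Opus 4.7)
The plan is to execute a linear sieve argument in the spirit of Rosser and Iwaniec, adapted to the trivariate setting where each of $d_1,d_2,d_3$ carries its own Rosser weight. I would first convert $S_{a,h}(\mathcal{A}_{\bm{\ell}}, z_0)$ into a sum over divisor triples via componentwise M\"obius inversion, obtaining
\begin{equation*}
S_{a,h}(\mathcal{A}_{\bm{\ell}}, z_0) = \sum_{\substack{\bm{d}\in\mathcal{S}_{2^a}^3\\ p\mid d_j\Rightarrow p\in \mathcal{P}_{h,z_0}}} \mu(d_1)\mu(d_2)\mu(d_3)\, r_{X^{\bm{d\ell}}}(h),
\end{equation*}
where the power-of-$2$ contribution is carried by the inclusion-exclusion over $\bm{\alpha}\in\{0,1\}^3$ from \eqref{eqn:SahPdef}. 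I would then replace the M\"obius weight on each coordinate by the lower-bound Rosser combination $\Lambda_{d_j}^- = 3\lambda_{d_j}^- - 2\lambda_{d_j}^+$, truncating to $d_j \leq D_0$; the standard sign inequalities for Rosser weights bound the induced sieve error.

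Next, I would split each arithmetic count into its three natural components: the genus piece $r_{\pgen(X^{\bm{d\ell}})}(h)$, the spinor-minus-genus piece, and the orthogonal cusp form piece. The cusp form piece is absorbed directly into the final big-$O$ term of the proposition. The spinor-minus-genus piece is controlled by Proposition \ref{proposition:unarypartford:general}: its coefficient $c_{\bm{a},\bm{d}}(h)$ has modulus at most $2^{-a_1-a_2-a_3+\min_j a_j}$ and vanishes except when the prime factors of $\sf(h)$ satisfy the explicit case analysis of that proposition. Bounding these terms using Lemma \ref{lem:betaboundspecialcase} for the relevant primes and factoring out $r_{\pgen(X^{\bm{\ell}})}(h)$ produces exactly the middle big-$O$ term involving $S_{a,h}^{\operatorname{ET}}(z_0)$.

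For the genus main term, I would apply \eqref{eqn:ratiorgendrgen1intoprod} to write $r_{\pgen(X^{\bm{d\ell}})}(h) = r_{\pgen(X^{\bm{\ell}})}(h)\prod_{p\text{ odd}} \beta_{X^{p^{\ord_p\bm{d}}},p}(h)$, and then use Lemma \ref{lem:beta}(5) to separate $\beta_{X^{\bm{d}},p}(h)$ into a product over $j$ of one-variable factors $\beta_{X^{(d_j,1,1)},p}(h)$, with discrepancy $g(\bm{d})$ depending only on $u_{i,j}=\gcd(d_i,d_j)$ and bounded by $\max_{i,j}(u_{i,j})^{12}$. After this separation, the inner sum factors into three one-dimensional Rosser sums, to which the fundamental lemma of the linear sieve (in Iwaniec's form) applies with density $\omega(p)/p = \beta_{X^{(1,1,p)},p}(h) \approx 2/p$ by Lemma \ref{lem:betabound}. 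Combining the resulting sieve product $\prod_{p\mid P_{\sf(h)}(z_0)}\bigl(1-\beta_{X^{(1,1,p)},p}(h)\bigr)$ with the $2$-adic factor $1 - 3/2^a + 3/2^{2a} - 1/2^{2a}$ arising from the $\bm{\alpha}$-sum (via the ratio computation \eqref{eqn:rgenratio}) yields exactly $W_{a,h}(z_0)$, with fundamental-lemma error of type $\Delta^C e^{-s_0}$.

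The main obstacle is the bookkeeping of the secondary errors. The $g(\bm{d})$ discrepancy, via Mertens-type estimates on $\prod_{p\mid n}(1+cp^{-4/3})$ (yielding the factor $H(n)$) together with controlling sums over pairwise gcds of squarefree tuples, contributes the $H(n)^4\Delta^{-2/3}\log(D_0)^6\log\log(\sf(h))^6$ term. The remaining $\Delta^{\varepsilon-B}\log(z_0)^2\log\log(\sf(h))^2$ piece tracks the handling of primes dividing $\sf(h)$, where $\beta_{X^{(1,1,p)},p}(h)$ may deviate from the generic $2/p$, requiring extra isolation in the Rosser truncation. Keeping all error terms uniform in $h$ with explicit $\log\log(\sf(h))$ dependence, while balancing the separation error against the level-of-distribution constraint, is the technical heart of the argument.
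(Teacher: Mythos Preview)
Your overall architecture matches the paper's: reduce to divisor triples, split into genus/spinor/cuspidal pieces, factor the genus ratio via \eqref{eqn:ratiorgendrgen1intoprod}, separate the local densities using Lemma~\ref{lem:beta}(5), and apply the fundamental lemma. However, there is one genuine gap in the sieve step.

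You propose to replace the M\"obius weight on \emph{each} coordinate by $\Lambda_{d_j}^- = 3\lambda_{d_j}^- - 2\lambda_{d_j}^+$. This does not yield a valid lower bound: since $\sum_{d\mid c}\lambda_d^-$ can be negative while $\sum_{d\mid c}\mu(d)\in\{0,1\}$, the product $\prod_{j=1}^3\sum_{d_j\mid c_j}\Lambda_{d_j}^-$ is not in general bounded above by $\prod_{j=1}^3\sum_{d_j\mid c_j}\mu(d_j)$. The correct device is the combinatorial inequality of Blomer--Br\"udern \cite[Lemma~4.2]{BlomerBrudern},
\[
\prod_{j=1}^3\sum_{d_j\mid c_j}\mu(d_j)\ \geq\ \sum_{k=1}^3\Bigl(\sum_{d_k\mid c_k}\lambda_{d_k}^-\Bigr)\prod_{j\neq k}\sum_{d_j\mid c_j}\lambda_{d_j}^+\ -\ 2\prod_{j=1}^3\sum_{d_j\mid c_j}\lambda_{d_j}^+,
\]
which after exploiting the symmetry in $d_1,d_2,d_3$ collapses to the \emph{asymmetric} weight $\Lambda_{d_1}^-\lambda_{d_2}^+\lambda_{d_3}^+$. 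The matching upper bound uses $\lambda_{d_1}^+\lambda_{d_2}^+\lambda_{d_3}^+$. Only with this asymmetric pair do the three one-dimensional Rosser sums become legitimate, and the fundamental lemma (in the form of \cite[Lemma~11]{BrudernFouvry}) then applies to each factor separately.

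A smaller inaccuracy: the $\Delta^{\varepsilon-B}\log(z_0)^2\log\log(\sf(h))^2$ term does not arise from primes dividing $\sf(h)$. It comes from the M\"obius inversion that converts the constraints $\gcd(d_i,d_j)=u_{i,j}$ into divisibility conditions $u_{i,j}\ell_{i,j}\mid d_i,d_j$; truncating the auxiliary variables $\ell_{i,j}$ at $\Delta^B$ and summing the tail produces this error. The $\log\log(\sf(h))$ powers appear only because the ambient product $\prod_{p\mid P_{\sf(h)}(z_0)}(1+O(1/p))$ includes the primes $p\mid\sf(h)$, contributing a factor $\sigma_{-1}(\sf(h))^{O(1)}\ll\log\log(\sf(h))^{O(1)}$.
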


For $\varepsilon\in\{\pm 1\}$, we set
\[
M_{h}^{\varepsilon}(z_0):=\begin{cases}
\sum_{d_1\mid P_{\sf(h)}(z_0)}\Lambda_{d_1}^-\sum_{d_2\mid P_{\sf(h)}(z_0)}\lambda_{d_2}^+\sum_{d_3\mid P_{\sf(h)}(z_0)}\lambda_{d_3}^+\prod_{p\mid \bm{d}} \beta_{X^{\bm{d}},p}(h)&\text{if }\varepsilon=-1,\\
\sum_{d_1\mid P_{\sf(h)}(z_0)}\lambda_{d_1}^+\sum_{d_2\mid P_{\sf(h)}(z_0)}\lambda_{d_2}^+\sum_{d_3\mid P_{\sf(h)}(z_0)}\lambda_{d_3}^+\prod_{p\mid \bm{d}} \beta_{X^{\bm{d}},p}(h)&\text{if }\varepsilon=1,
\end{cases}
\]
and next compute upper and lower bounds for $S_{a,h}(\mathcal{A}_{\bm{\ell}},z_0)$. 

\begin{proposition}\label{prop:SahBound}
Suppose that all prime divisors of $\bm{\ell}$ are greater than $z_0$ and do not divide $\sf(h)$.
\noindent

\noindent
\begin{enumerate}[leftmargin=*,label={\rm (\arabic*)}]
\item 
We have 
\begin{multline*}
S_{a,h}(\mathcal{A}_{\bm{\ell}},z_0)\geq  \left(1-\frac{3}{2^a}+\frac{3}{2^{2a}}-\frac{1}{2^{2a}}\right)M_{h}^{-}(z_0)r_{\pgen(X^{\bm{\ell}})}(h)-5 S_{a,h}^{\operatorname{ET}}\left(z_0\right)r_{\pgen(X^{\bm{\ell}})}(h)\\
-5\hspace{-.5cm}\sum_{\substack{\bm{d}\in\mathcal{S}_{2^a}^3\\ p\mid d_j\implies p\in\mathcal{P}_{h,z_0}\cup \{2\}\\ d_j\leq D_0}}\hspace{-.5cm}\left|r_{X^{\bm{d}\bm{\ell}}}(h)-r_{\pspn(X^{\bm{d}\bm{\ell}})}(h)\right|.
\end{multline*}
\item 
We have 
\begin{multline*}
S_{a,h}(\mathcal{A}_{\bm{\ell}},z_0)\leq  \left(1-\frac{3}{2^a}+\frac{3}{2^{2a}}-\frac{1}{2^{2a}}\right)M_{h}^{+}(z_0)r_{\pgen(X^{\bm{\ell}})}(h)+ 5 S_{a,h}^{\operatorname{ET}}\left(z_0\right)r_{\pgen(X^{\bm{\ell}})}(h)\\
+5\hspace{-.5cm}\sum_{\substack{\bm{d}\in\mathcal{S}_{2^a}^3\\ p\mid d_j\implies p\in\mathcal{P}_{h,z_0}\cup \{2\}\\ d_j\leq D_0}}\hspace{-.5cm}\left|r_{X^{\bm{d}\bm{\ell}}}(h)-r_{\pspn(X^{\bm{d}\bm{\ell}})}(h)\right|.
\end{multline*}
\end{enumerate}
\end{proposition}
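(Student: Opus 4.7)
The plan is to combine the multidimensional Rosser sieve applied in each of the three variables with the approximations $r_{X^{\bm{d}\bm{\ell}}}(h)\approx r_{\pspn(X^{\bm{d}\bm{\ell}})}(h)\approx r_{\pgen(X^{\bm{d}\bm{\ell}})}(h)$, extracting $r_{\pgen(X^{\bm{\ell}})}(h)$ at the end via the ratio identity \eqref{eqn:ratiorgendrgen1intoprod}. First, I would rewrite $S_{a,h}(\mathcal{A}_{\bm{\ell}},z_0)$ using \eqref{eqn:SahPdef} and \eqref{eqn:ShPdef} as
\[
\sum_{\bm{\alpha}\in\{0,1\}^3}\mu(2^{\alpha_1})\mu(2^{\alpha_2})\mu(2^{\alpha_3})\sum_{\bm{x}\in\Z^3,\ \sum_jp_m(2^{a\alpha_j}\ell_jx_j)=n}\prod_{j=1}^3\mathbf{1}_{(x_j,P_{\sf(h)}(z_0))=1},
\]
and then apply the Rosser sieve: for the upper bound, use the pointwise inequality $\mathbf{1}_{(x,P)=1}\le\sum_{d\mid(x,P)}\lambda_d^+$ simultaneously in all three variables (this multiplies out since each side is nonnegative), and for the lower bound use the vector form of the Rosser--Iwaniec sieve which places $\Lambda_{d_1}^-$ on one variable and $\lambda_{d_j}^+$ on the other two, exploiting $\sum_{d\mid n}\Lambda_d^-\le\mathbf{1}_{n=1}$. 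Interchanging the order of summation yields
\[
\sum_{\bm{\alpha}\in\{0,1\}^3}\mu(2^{\alpha_1})\mu(2^{\alpha_2})\mu(2^{\alpha_3})\sum_{\bm{d}\in\mathcal{S}_1^3}(\text{Rosser weight})\cdot r_{X^{\bm{d}\cdot 2^{a\bm{\alpha}}\bm{\ell}}}(h),
\]
with the $\bm{d}$-sum supported on $d_j\mid P_{\sf(h)}(z_0)$ and $d_j\le D_0$.

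The second step replaces $r_X$ by $r_{\pspn(X)}$: since $|\Lambda_d^-|\le 5$ and $|\lambda_d^+|\le 1$, this incurs an error bounded by $5\sum_{\bm{d}\in\mathcal{S}_{2^a}^3}|r_{X^{\bm{d}\bm{\ell}}}(h)-r_{\pspn(X^{\bm{d}\bm{\ell}})}(h)|$, matching the last summand of the statement. I then invoke Proposition~\ref{proposition:unarypartford:general}, which gives
\[
r_{\pspn(X^{\bm{d'}\bm{\ell}})}(h)-r_{\pgen(X^{\bm{d'}\bm{\ell}})}(h)=c_{\bm{a},\bm{d}}(h)\,r_{\pgen(X^{\bm{d}\bm{\ell}})}(h)
\]
with $|c_{\bm{a},\bm{d}}(h)|\le 2^{-a_1-a_2-a_3+\min_j a_j}$, together with $c_{\bm{0},\bm{d}}=0$ (the case $\bm{d}\in\mathcal{S}_1^3$) from Proposition~\ref{proposition:unarypartforodd:d}. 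Summing over $\bm{\alpha}\ne\bm{0}$ gives the combinatorial factor
\[
\sum_{\bm{\alpha}\in\{0,1\}^3\setminus\{\bm{0}\}}2^{-a|\bm{\alpha}|_1+a\min_j\alpha_j}=\frac{3\cdot 2^a+4}{2^{2a}},
\]
and the support condition on $\bm{d}$ at each $p\mid\sf(h)$ from Proposition~\ref{proposition:unarypartford:general} forces the remaining sum $\sum_{\bm{d}\in\mathcal{S}_1^3}\prod_p\beta_{X^{\bm{p}^{\ord_p(\bm{d})}},p}(h)$ to factor over primes into exactly the product appearing in the definition of $S_{a,h}^{\operatorname{ET}}(z_0)$: the factors at primes $p\mid\sf(h)$ are estimated by Lemma~\ref{lem:betaboundspecialcase}, while the factors at primes $p\le z_0$ not dividing $\sf(h)$ (where no spinor-genus obstruction arises) are given by $\sum_{\bm{c}\in\{0,1\}^3}\beta_{X^{\bm{p^c}},p}(h)$.

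For the main term, \eqref{eqn:ratiorgendrgen1intoprod} writes $r_{\pgen(X^{\bm{d}\cdot 2^{a\bm{\alpha}}\bm{\ell}})}(h)$ as a product of $r_{\pgen(X^{\bm{\ell}})}(h)$, the $2$-adic ratio $2^{-a|\bm{\alpha}|_1+a\min_j\alpha_j}$ from \eqref{eqn:rgenratio}, and the odd-prime product $\prod_p\beta_{X^{\bm{p}^{\ord_p(\bm{d})}},p}(h)$ from Lemma~\ref{lem:beta}. Folding the $\mu(2^{\alpha_j})$ weights into the $2$-adic ratios yields
\[
\sum_{\bm{\alpha}\in\{0,1\}^3}\mu(2^{\alpha_1})\mu(2^{\alpha_2})\mu(2^{\alpha_3})\,2^{-a|\bm{\alpha}|_1+a\min_j\alpha_j}=1-\frac{3}{2^a}+\frac{3}{2^{2a}}-\frac{1}{2^{2a}},
\]
and the residual $\bm{d}$-sum of Rosser weights against $\prod_p\beta$ is precisely $M_h^\pm(z_0)$, yielding the claimed bounds. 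The main obstacle is the careful execution of the multidimensional Rosser--Iwaniec lower bound with the asymmetric weights $(\Lambda^-,\lambda^+,\lambda^+)$ so that, after summation over $\bm{x}$, one cleanly arrives at a lower bound of the form $M_h^-(z_0)\,r_{\pgen(X^{\bm{\ell}})}(h)$; a secondary point requiring care is checking that the spinor-genus error factorizes over primes exactly into the product defining $S_{a,h}^{\operatorname{ET}}(z_0)$.
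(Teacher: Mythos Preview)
Your proposal follows essentially the same route as the paper: the multidimensional Rosser--Iwaniec inequality \eqref{eqn:RosserWeightsProduct} (from \cite[Lemma~4.2]{BlomerBrudern}), the decomposition $r_X=r_{\pgen}+(r_{\pspn}-r_{\pgen})+(r_X-r_{\pspn})$, Propositions~\ref{proposition:unarypartforodd:d} and~\ref{proposition:unarypartford:general} for the spinor-genus contribution, and the ratio identities \eqref{eqn:ratiorgendrgen1intoprod} and \eqref{eqn:rgenratio} to isolate $r_{\pgen(X^{\bm{\ell}})}(h)$ and produce the factor $1-\tfrac{3}{2^a}+\tfrac{3}{2^{2a}}-\tfrac{1}{2^{2a}}$. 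One caveat: the one-dimensional bound $\sum_{d\mid n}\Lambda_d^-\le\mathbf{1}_{n=1}$ does not by itself minorize a \emph{product} of three indicators---you really need the full inequality \eqref{eqn:RosserWeightsProduct}, and the passage to the asymmetric weight $\Lambda^-_{d_1}\lambda^+_{d_2}\lambda^+_{d_3}$ is justified only at the level of the main term, where the symmetry of $\beta_{X^{\bm{d}},p}(h)$ in $\bm{d}$ makes the three cyclic placements of $\lambda^-$ coincide (the error terms are handled by the uniform bound $5$ on the total weight, exactly as in the paper).
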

\begin{remark}
The last sum is the coefficient of a cusp form orthogonal to unary theta functions; this can be bounded using usual techniques and is an error term for $z_0$ and $\ell_j$ sufficiently small (after modifying $\mu$ to $\widetilde{\mu}$).
\end{remark}
\begin{proof}
(1) Since (see \cite[Lemma 10]{BrudernFouvry}) 
\begin{equation}\label{eqn:rosserweights}
\sum_{d\mid c}\lambda_{d}^-\leq \sum_{d\mid c} \mu(d)\leq \sum_{d\mid c} \lambda_{d}^+
\end{equation}
and $\sum_{d\mid c}\mu(d)\in \{0,1\}$, \cite[Lemma 4.2]{BlomerBrudern} implies that 
\begin{equation}\label{eqn:RosserWeightsProduct}
\prod_{j=1}^3 \sum_{d_j\mid c_j}\mu\left(d_j\right)\geq \sum_{k=1}^3 \sum_{d_k\mid c_k}\lambda_{d_k}^-\prod_{\substack{1\leq j\leq 3\\ j\neq k}} \sum_{d_j\mid c_j}\lambda_{d_j}^+ -2\prod_{j=1}^3\sum_{d_j\mid c_j}\lambda_{d_j}^+.
\end{equation}
Suppose that $\mathcal{P}_1$ and $\mathcal{P}_2$ are two disjoint sets of primes with $p\nmid \ell_j$ for every $p\in\mathcal{P}_2$ and $1\leq j\leq 3$. Writing $\mathcal{R}:=\prod_{p\in \mathcal{P}_2}p$, we compute
\begin{align}\label{eqn:Sahlower}
S_{a,h}\left(\mathcal{A}_{\bm{\ell}},\mathcal{P}_1\cup\mathcal{P}_2\right)&= \sum_{\bm{x}\in \mathscr{S}_{a,h}\left(\mathcal{A}_{\bm{\ell}},\mathcal{P}_1\right)}\prod_{j=1}^3\sum_{d_j\mid \gcd(x_j,\mathcal{R})} \mu\left(d_j\right)\\
\nonumber &= \sum_{\bm{x}\in \mathscr{S}_{a,h}\left(\mathcal{A}_{\bm{\ell}},\mathcal{P}_1\right)}\sum_{\substack{d_1\mid \mathcal{R}\\ d_1\mid x_1}}\mu(d_1)\sum_{\substack{d_2\mid \mathcal{R}\\ d_2\mid x_2}}\mu(d_2)\sum_{\substack{d_3\mid \mathcal{R}\\ d_3\mid x_3}}\mu(d_3)\\
\nonumber &\overset{\eqref{eqn:RosserWeightsProduct}}\geq  \sum_{\bm{x}\in \mathscr{S}_{a,h}\left(\mathcal{A}_{\bm{\ell}},\mathcal{P}_1\right)}\sum_{\substack{d_1\mid \mathcal{R}\\ d_1\mid x_1}}\Lambda_{d_1}^-\sum_{\substack{d_2\mid \mathcal{R}\\ d_2\mid x_2}}\lambda_{d_2}^+\sum_{\substack{d_3\mid \mathcal{R}\\ d_3\mid x_3}}\lambda_{d_3}^+\\
\nonumber &=\sum_{d_1\mid \mathcal{R}}\Lambda_{d_1}^-\sum_{d_2\mid \mathcal{R}}\lambda_{d_2}^+\sum_{d_3\mid \mathcal{R}}\lambda_{d_3}^+S_{a,h}\left(\mathcal{A}_{\bm{d}\cdot \bm{\ell}},\mathcal{P}_1\right).
\end{align}
We assume that $p\mid \ell_j\implies p>z_0$ and $p\nmid \sf(h)$ and take $\mathcal{P}_1=\emptyset$ and $\mathcal{P}_2=\mathcal{P}_{h,z_0}$, so that by \eqref{eqn:SahPdef} we have 
\[
S_{a,h}\left(\mathcal{A}_{\bm{d}\cdot \bm{\ell}},\mathcal{P}_1\right)= \sum_{\bm{\alpha}\in\{0,1\}^3} \mu(2^{\alpha_1})\mu(2^{\alpha_2})\mu(2^{\alpha_3})r_{X^{2^{a\bm{\alpha}}\bm{d}\bm{\ell}}}(h).
\]
 We decompose 
\begin{equation}\label{eqn:rshiftdecomp}
r_{X^{\bm{d}\bm{\ell}}}(h)=r_{\pgen(X^{\bm{d}\bm{\ell}})}(h)+\left(r_{\pspn(X^{\bm{d}\bm{\ell}})}(h)-r_{\pgen(X^{\bm{d}\bm{\ell}})}(h)\right)+\left(r_{X^{\bm{d}\bm{\ell}}}(h)-r_{\pspn(X^{\bm{d}\bm{\ell}})}(h)\right).
\end{equation}
Plugging in the lower bound \eqref{eqn:Sahlower} and then decomposing with \eqref{eqn:rshiftdecomp}  gives for $\bm{\ell}$ with $p\mid \ell_j\implies p>z_0$ and $p\nmid \sf(h)$

\vspace{-1cm}
\begin{equation}\label{eqn:sievedecomp}
S_{a,h}\left(\mathcal{A}_{\bm{\ell}},z_0\right)\geq \hspace{-.3cm}\sum_{\substack{\bm{d}\in\mathcal{S}_{2^a}^3\\ p\mid d_j\implies p\in \mathcal{P}_{h,z_0}}}\hspace{-.3cm}\Lambda_{d_1}^-\lambda_{d_2}^+\lambda_{d_3}^+\begin{array}{ll}\\  \Big(r_{\pgen(X^{\bm{d}\bm{\ell}})}(h) &+ \left(r_{\pspn(X^{\bm{d}\bm{\ell}})}(h)-r_{\pgen(X^{\bm{d}\bm{\ell}})}(h)\right)\\
& \quad\ + \left(r_{X^{\bm{d}\bm{\ell}}}(h)-r_{\pspn(X^{\bm{d}\bm{\ell}})}(h)\right)\Big),\end{array}
\end{equation}
which we split as three sums. Since $|\Lambda_{d_1}^-|\leq 5$, pulling the absolute value inside the third sum in \eqref{eqn:sievedecomp} yields the last term on the right-hand side of the proposition, and we henceforth ignore it.

We bring the absolute value inside the second sum and then further split it depending on whether $\bm{d}\in \mathcal{S}_1^3$ or not.  The second sum is then bounded by $5$ times
\begin{multline}\label{eqn:sievesplit}
\hspace{-.5cm}\sum_{\substack{\bm{d}\in\mathcal{S}_1^3\\ p\mid d_j\implies p\in \mathcal{P}_{h,z_0}}}\hspace{-.5cm}\left|r_{\pspn(X^{\bm{d}\bm{\ell}})}(h)-r_{\pgen(X^{\bm{d}\bm{\ell}})}(h)\right|\ + \hspace{-.6cm}\sum_{\substack{\bm{d'}\in\mathcal{S}_{2^a}^3\setminus \mathcal{S}_1^3\\ p\mid d_j'\implies p\in\mathcal{P}_{h,z_0}\cup\{2\}}}\hspace{-.6cm}\left|r_{\pspn(X^{\bm{d'}\bm{\ell}})}(h)-r_{\pgen(X^{\bm{d'}\bm{\ell}})}(h)\right|.
\end{multline}
For $\bm{d}\in \mathcal{S}_1^3$, we have by Proposition \ref{proposition:unarypartforodd:d} that
\[
r_{\pspn(X^{\bm{d}\bm{\ell}})}(h)-r_{\pgen(X^{\bm{d}\bm{\ell}})}(h)=0,
\]
so the first sum on the right-hand side vanishes. For $\bm{d'}\in \mathcal{S}_{2^a}^3\setminus\mathcal{S}_1^3$, write $\bm{d'}=\bm{d}\cdot (2^{a_1},2^{a_2},2^{a_3})$ with $\bm{d}\in \mathcal{S}_1^3$, and recall the notation in Proposition \ref{proposition:unarypartford:general}
\[
r_{\pspn(X^{\bm{d'}\bm{\ell}})}(h)-r_{\pgen(X^{\bm{d'}\bm{\ell}})}(h)= c_{\bm{a},\bm{d}\bm{\ell}}(h)r_{\pgen(X^{\bm{d}\bm{\ell}})}(h)
\]
with the properties that
\[
-2^{-a_1-a_2-a_3+\min_j\{a_j\}}\leq  c_{\bm{a},\bm{d}\bm{\ell}}(h)\leq 2^{-a_1-a_2-a_3+\min_j\{a_j\}}
\]
and $c_{\bm{0},\bm{d}\bm{\ell}}(h)=0$. Therefore 
\[
\sum_{\substack{\bm{d'}\in\mathcal{S}_{2^a}^3\setminus \mathcal{S}_1^3\\ p\mid d_j'\implies p\in\mathcal{P}_{h,z_0}\cup\{2\}}}\hspace{-.5cm}\left|r_{\pspn(X^{\bm{d'}\bm{\ell}})}(h)-r_{\pgen(X^{\bm{d'}\bm{\ell}})}(h)\right|
=\hspace{-.2cm}\sum_{\bm{a}\in \{0,a\}^3\setminus\{\bm{0}\}} \hspace{-.2cm}\sum_{\substack{\bm{d}\in \mathcal{S}_1^3\\ p\mid d_j\implies p\in \mathcal{P}_{h,z_0}}}\hspace{-.3cm} \left|c_{\bm{a},\bm{d}\bm{\ell}}(h)\right|r_{\pgen(X^{\bm{d}\bm{\ell}})}(h).
\]
Plugging this back into the second term in \eqref{eqn:sievesplit} and then plugging back into \eqref{eqn:sievedecomp} yields that the first two terms in \eqref{eqn:sievedecomp} are bounded below by 
\begin{equation}\label{eqn:firsttwoterms}
\hspace{-.4cm}\sum_{\substack{\bm{d}\in\mathcal{S}_{2^a}^3\\ p\mid d_j\implies p\in\mathcal{P}_{h,z_0}\cup\{2\}}}\hspace{-0.8cm}\Lambda_{d_1}^-\lambda_{d_2}^+\lambda_{d_3}^+r_{\pgen(X^{\bm{d}\bm{\ell}})}(h)-5\hspace{-.3cm}\sum_{\substack{\bm{d}\in\mathcal{S}_1^3\\ p\mid d_j\implies p\in\mathcal{P}_{h,z_0}}}\hspace{-.3cm}r_{\pgen(X^{\bm{d}\bm{\ell}})}(h)\left(\sum_{\bm{a}\in \{0,a\}^3\setminus\{\bm{0}\}}\left|c_{\bm{a},\bm{d}\bm{\ell}}(h)\right|\right).
\end{equation}

Using \eqref{eqn:rgenratio} and noting that 
\begin{equation}\label{eqn:local2density}
\sum_{\substack{\bm{\alpha}\in \{0,1\}^3}} \mu\left(2^{\alpha_1}\right)\mu\left(2^{\alpha_2}\right)\mu\left(2^{\alpha_3}\right) 2^{a\left(-\alpha_1-\alpha_2-\alpha_3+\min_j\{\alpha_j\}\right)}=1-\frac{3}{2^a}+\frac{3}{2^{2a}}-\frac{1}{2^{2a}},
\end{equation}
we conclude that 
\begin{equation}\label{eqn:firsttwoterms-dodd}
\hspace{-1cm}\sum_{\substack{\bm{d}\in\mathcal{S}_{2^a}^3\\ p\mid d_j\implies p\in\mathcal{P}_{h,z_0}\cup\{2\}}}\hspace{-0.8cm}\Lambda_{d_1}^-\lambda_{d_2}^+\lambda_{d_3}^+r_{\pgen(X^{\bm{d}\bm{\ell}})}(h)=\hspace{-.5cm}\sum_{\substack{\bm{d}\in\mathcal{S}_1^3\\ p\mid d_j\implies p\in\mathcal{P}_{h,z_0}}}\hspace{-.7cm}
\Lambda_{d_1}^-\lambda_{d_2}^+\lambda_{d_3}^+r_{\pgen(X^{\bm{d}\bm{\ell}})}(h)\left(1-\frac{3}{2^a}+\frac{3}{2^{2a}}-\frac{1}{2^{2a}}\right).
\end{equation}
We note by Proposition \ref{proposition:unarypartford:general} that 
\begin{equation}\label{eqn:boundabsval2factors}
\sum_{\bm{a}\in \{0,a\}^3\setminus\{\bm{0}\}}\left|c_{\bm{a},\bm{d}\bm{\ell}}(h)\right|\leq \sum_{\bm{a}\in \{0,a\}^3\setminus\{\bm{0}\}} 2^{-a_1-a_2-a_3+\min_j\{a_j\}}
=\frac{3\cdot 2^{a}+4}{2^{2a}}=:s_a(h).
\end{equation}
Plugging in \eqref{eqn:firsttwoterms-dodd}, we next rewrite $r_{\pgen(X^{\bm{d}\bm{\ell}})}(h)$ appearing in \eqref{eqn:firsttwoterms} in terms of $r_{\pgen(X^{\bm{d}})}(h)$. 

Combining \eqref{eqn:boundabsval2factors} and \eqref{eqn:ratiorgendrgen1intoprod} with the condition for $c_{\bm{a},\bm{d}}(h)\neq 0$ in Proposition \ref{proposition:unarypartford:general}, we may bound \eqref{eqn:firsttwoterms} from below by
\begin{equation}\label{eqn:lowerboundfirsttwoterms}
\left(1-\frac{3}{2^a}+\frac{3}{2^{2a}}-\frac{1}{2^{2a}}\right)M_{h}^{-}(z_0)r_{\pgen(X^{\bm{\ell}})}(h)-5S_{a,h}^{\operatorname{ET}}\left(z_0\right)r_{\pgen(X^{\bm{\ell}})}(h).
\end{equation}
\noindent

\noindent
(2) Plugging the other inequality from \eqref{eqn:rosserweights} into the calculation of \eqref{eqn:Sahlower} (where \eqref{eqn:RosserWeightsProduct} is used) yields the claim by the same steps as in (1).
\end{proof}

In order to prove Proposition \ref{prop:SahMainz0}, we need to get bounds for the main term.
\begin{proof}[Proof of Proposition \ref{prop:SahMainz0}]
The error terms are of the same size in Proposition \ref{prop:SahBound} (1) and (2), so it remains to bound the main terms.

We first consider the main term from Proposition \ref{prop:SahBound} (2). We separate the terms off which have $\gcd(d_j,d_k)>\Delta$ for some $j\neq k$. For those terms, we use Lemma \ref{lem:beta} (4) to bound (WLOG, assume that $\gcd(d_2,d_3)>\Delta$ by symmetry) 
\begin{multline*}
\hspace{-.32cm}\sum_{d_1\mid P_{\sf(h)}(z_0)} \hspace{-.2cm}\lambda_{d_1}^+\hspace{-.2cm}\sum_{d_2\mid P_{\sf(h)}(z_0)}\hspace{-.2cm} \lambda_{d_2}^+\hspace{-.2cm}\sum_{\substack{d_3\mid P_{\sf(h)}(z_0)\\ \gcd(d_2,d_3)>\Delta}}\hspace{-.2cm} \lambda_{d_3}^+ \prod_{p\mid \bm{d}} \beta_{X^{\bm{d}},p}(h)\leq\hspace{-.1cm}\sum_{d_1\mid P_{\sf(h)}(z_0)} \sum_{d_2\mid P_{\sf(h)}(z_0)} \sum_{\substack{d_3\mid P_{\sf(h)}(z_0)\\ \gcd(d_2,d_3)>\Delta}} \prod_{p\mid \bm{d}} \beta_{X^{\bm{d}},p}(h)\\
\ll \sum_{d_1\mid P_{\sf(h)}(z_0)} \frac{\widetilde{\omega}(d_1)}{d_1}\sum_{\delta\geq \Delta} \left(\sum_{\substack{d_2\mid P_{\sf(h)}(z_0)\\ \delta\mid d_2}} \frac{\widetilde{\omega}(d_2)}{d_2}\right)^2\ll \prod_{p\in \mathcal{P}_{h,z_0}} \left(1+\frac{\widetilde{\omega}(p)}{p}\right)^3 \sum_{\delta\geq \Delta}\mu(\delta)^2\left(\frac{\widetilde{\omega}(\delta)}{\delta}\right)^2.
\end{multline*}
We then bound, using Lemma \ref{lem:beta} (4), 
\begin{align*}
\sum_{\delta\geq \Delta}\mu(\delta)^2\left(\frac{\widetilde{\omega}(\delta)}{\delta}\right)^2&\leq \sum_{\delta\geq \Delta}\left(\frac{\delta}{\Delta}\right)^{\frac{2}{3}}\mu(\delta)^2 \left(\frac{\widetilde{\omega}(\delta)}{\delta}\right)^2\leq \Delta^{-\frac{2}{3}}\sum_{\delta\geq 1}\mu(\delta)^2\frac{\widetilde{\omega}(\delta)^2}{\delta^{\frac{4}{3}}}\\
&=\Delta^{-\frac{2}{3}}\prod_{p}\left(1+\frac{\widetilde{\omega}(p)^2}{p^{\frac{4}{3}}}\right)\leq \Delta^{-\frac{2}{3}}\prod_{p\mid n}\left(1+\frac{\widetilde{\omega}(p)^2}{p^{\frac{4}{3}}}\right)\prod_{p}\left(1+\frac{4}{p^{\frac{4}{3}} }\right)\\
&\ll \Delta^{-\frac{2}{3}}\prod_{p\mid n}\left(1+\frac{\widetilde{\omega}(p)^2}{p^{\frac{4}{3}}}\right)\ll \Delta^{-\frac{2}{3}}\prod_{p\mid n}\left(1+\frac{1}{(p-1)^{\frac{2}{3}}}\right)\ll \Delta^{-\frac{2}{3}}H(n).
\end{align*}
Note that the last inequality above holds because, for sufficiently-large $p$, 
\[
1+\frac{1}{(p-1)^{\frac{2}{3}}} \ll \left(1+ p^{-\frac{5}{3}}\right)\left(1+\frac{1}{p^{\frac{1}{3}}(p-1)^{\frac{1}{3}}}\right).
\]
\begin{extradetails}
We compute 
\begin{align*}
\frac{1+\frac{1}{(p-1)^{\frac{2}{3}}}}{1+\frac{1}{p^{\frac{1}{3}}(p-1)^{\frac{1}{3}}}} &= \left(1+\frac{1}{(p-1)^{\frac{2}{3}}}\right)\sum_{j=0}^{\infty}\frac{ (-1)^j }{p^{\frac{j}{3}}(p-1)^{\frac{j}{3}}}\\
&\leq  1+\frac{1}{(p-1)^{\frac{2}{3}}}-\frac{1}{p^{\frac{1}{3}}(p-1)^{\frac{1}{3}}}-\frac{1}{p^{\frac{1}{3}}(p-1)} + \frac{1}{p^{\frac{2}{3}}(p-1)^{\frac{2}{3}}} + \frac{1}{p^{\frac{2}{3}}(p-1)^{\frac{4}{3}}}\\
&=1+ \frac{1-\left(1-p^{-1}\right)^{\frac{1}{3}}}{(p-1)^{\frac{2}{3}}} - \frac{1-\left(1-p^{-1}\right)^{\frac{1}{3}}}{p^{\frac{1}{3}}(p-1)} + \frac{1}{p^{\frac{2}{3}}(p-1)^{\frac{4}{3}}}\ll 1+ p^{-\frac{5}{3}},
\end{align*}
so 
\[
\prod_{p\mid n}\left(1+\frac{1}{(p-1)^{\frac{2}{3}}}\right)\ll \prod_{p\mid n} \left(1+ p^{-\frac{5}{3}}\right)\prod_{p\mid n}\left(1+\frac{1}{p^{\frac{1}{3}}(p-1)^{\frac{1}{3}}}\right)\ll H(n).
\]
\end{extradetails}
We then bound (using $\sigma_{-1}(n)=\frac{\sigma(n)}{n}\ll \log(\log(n))$ by \cite[Section 22.9]{HardyWright})
\[
\prod_{\substack{p\mid \sf(h)\\ p>z_0}}\left(1+\frac{2}{p}\right)^3\leq \prod_{p\mid \sf(h)}\left(1+\frac{1}{p}\right)^6=\sigma_{-1}(\sf(h))^6\ll \log(\log(\sf(h)))^6.
\]
and use \cite[(3.30)]{RosserSchoenfeld} to obtain
\[
\prod_{p\in\mathcal{P}_{h,z_0}} \left(1+\frac{\widetilde{\omega}(p)}{p}\right)^3 \sum_{\delta\geq \Delta}\mu(\delta)^2\left(\frac{\widetilde{\omega}(\delta)}{\delta}\right)^2
\ll\Delta^{-\frac{2}{3}} \log(z_0)^6 H(n)^4\log(\log(\sf(h)))^6.
\]
\begin{extradetails}
We have 
\[
 \prod_{p<z_0}\left(1+\frac{2}{p}\right)\leq  \prod_{p<z_0}\left(1+\frac{1}{p}\right)^2=\prod_{p<z_0}\left(1-\frac{1}{p}\right)^{-2}\left(1-p^{-2}\right)^{2}\leq \prod_{p<z_0}\left(1-\frac{1}{p}\right)^{-2}\ll \log(z_0)^2,
\]
where we use \cite[(3.30)]{RosserSchoenfeld} in the last step.
\end{extradetails}
Hence we may rewrite  $M_{h}^+(z_0)$ as 
\[
\hspace{-.25cm}\sum_{d_1\mid P_{\sf(h)}(z_0)}\hspace{-.2cm} \lambda_{d_1}^+\hspace{-.2cm}\sum_{d_2\mid P_{\sf(h)}(z_0)}\hspace{-.2cm} \lambda_{d_2}^+\hspace{-.2cm}\sum_{\substack{d_3\mid P_{\sf(h)}(z_0)\\ \gcd(d_i,d_j)\leq \Delta}} \hspace{-.2cm}\lambda_{d_3}^+ \prod_{p\mid \bm{d}} \beta_{X^{\bm{d}},p}(h)+O\left(\Delta^{-\frac{2}{3}} \log(z_0)^6 H(n)^4\log(\log(\sf(h)))^6\right).
\]
We group together those $\bm{d}$ with $\gcd(d_i,d_j)=u_{i,j}$. Thus 
\begin{multline*}
\sum_{d_1\mid P_{\sf(h)}(z_0)} \lambda_{d_1}^+\sum_{d_2\mid P_{\sf(h)}(z_0)} \lambda_{d_2}^+\sum_{\substack{d_3\mid P_{\sf(h)}(z_0)\\ \gcd(d_i,d_j)\leq \Delta}}\lambda_{d_3}^+ \prod_{p\mid \bm{d}} \beta_{X^{\bm{d}},p}(h)\\
=\sum_{u_{1,2}\leq \Delta}\sum_{u_{1,3}\leq \Delta}\sum_{u_{2,3}\leq \Delta}\sum_{d_1\mid P_{\sf(h)}(z_0)} \lambda_{d_1}^+\sum_{d_2\mid P_{\sf(h)}(z_0)} \lambda_{d_2}^+\sum_{\substack{d_3\mid P_{\sf(h)}(z_0)\\ \gcd(d_i,d_j)= u_{i,j}}}\lambda_{d_3}^+ \prod_{p\mid \bm{d}} \beta_{X^{\bm{d}},p}(h).
\end{multline*}
Since $g(\bm{d})$ only depends on $u_{i,j}$ by Lemma \ref{lem:beta} (5), we may write (abbreviating $g(\bm{d})$ by $g(\bm{u})$ for $\gcd(d_i,d_j)=u_{i,j}$)
\begin{multline*}
\sum_{u_{1,2}\leq \Delta}\sum_{u_{1,3}\leq \Delta}\sum_{u_{2,3}\leq \Delta}\sum_{d_1\mid P_{\sf(h)}(z_0)} \lambda_{d_1}^+\sum_{d_2\mid P_{\sf(h)}(z_0)} \lambda_{d_2}^+\sum_{\substack{d_3\mid P_{\sf(h)}(z_0)\\ \gcd(d_i,d_j)= u_{i,j}}}\lambda_{d_3}^+ \prod_{p\mid \bm{d}} \beta_{X^{\bm{d}},p}(h)\\
=\sum_{u_{1,2}\leq \Delta}\sum_{u_{1,3}\leq \Delta}\sum_{u_{2,3}\leq \Delta}g(\bm{u}) \sum_{d_1\mid P_{\sf(h)}(z_0)} \lambda_{d_1}^+\prod_{p\mid d_1}\beta_{X^{(p,1,1)},p}(h) \sum_{d_2\mid P_{\sf(h)}(z_0)} \lambda_{d_2}^+\prod_{p\mid d_2}\beta_{X^{(1,p,1)},p}(h)\\
\times\sum_{\substack{d_3\mid P_{\sf(h)}(z_0)\\ \gcd(d_i,d_j)= u_{i,j}}}\lambda_{d_3}^+\prod_{p\mid d_3}\beta_{X^{(1,1,p)},p}(h).
\end{multline*}
We rewrite the gcd condition on the inner sums by excluding divisibility by other primes using $\mu$ (or inclusion/exclusion). Letting $\ell_{i,j}$ run through all possible divisors of $P_{\sf(h)}(z_0)$ relatively prime to $u_{i,j}$, this gives  
\begin{multline*}
\sum_{d_1\mid P_{\sf(h)}(z_0)} \lambda_{d_1}^+\prod_{p\mid d_1}\beta_{X^{(p,1,1)},p}(h)\sum_{d_2\mid P_{\sf(h)}(z_0)} \lambda_{d_2}^+\prod_{p\mid d_2}\beta_{X^{(1,p,1)},p}(h)\sum_{\substack{d_3\mid P_{\sf(h)}(z_0)\\ \gcd(d_i,d_j)= u_{i,j}}}\lambda_{d_3}^+\prod_{p\mid d_3}\beta_{X^{(1,1,p)},p}(h)\\
=\sum_{\ell_{1,2}}\mu\left(\ell_{1,2}\right)\sum_{\ell_{1,3}}\mu\left(\ell_{1,3}\right)\sum_{\ell_{2,3}}\mu\left(\ell_{2,3}\right)\sum_{\substack{d_1\mid P_{\sf(h)}(z_0)\\ \lcm(u_{1,2}\ell_{1,2},u_{1,3}\ell_{1,3})\mid d_1}} \lambda_{d_1}^+\prod_{p\mid d_1}\beta_{X^{(p,1,1)},p}(h)\\
\times \sum_{\substack{d_2\mid P_{\sf(h)}(z_0)\\ \lcm(u_{1,2}\ell_{1,2},u_{2,3}\ell_{2,3})\mid d_2}} \lambda_{d_2}^+\prod_{p\mid d_2}\beta_{X^{(1,p,1)},p}(h)\sum_{\substack{d_3\mid P_{\sf(h)}(z_0)\\ \lcm(u_{1,3}\ell_{1,3},u_{2,3}\ell_{2,3})\mid d_3}}\lambda_{d_3}^+\prod_{p\mid d_3}\beta_{X^{(1,1,p)},p}(h).
\end{multline*}
By Lemma \ref{lem:beta} (3), we have 
\begin{multline*}
\sum_{\substack{d_1\mid P_{\sf(h)}(z_0)\\ \delta \mid d_1}} \lambda_{d_1}^+\prod_{p\mid d_1}\beta_{X^{(p,1,1)},p}(h) \leq \prod_{p\mid \delta}\beta_{X^{(\delta,1,1)},p}(h) \prod_{p\mid P_{\sf(h)}(z_0)}\left(1+\frac{2}{p}\right)\\
\leq\prod_{p\mid \delta} \beta_{X^{(\delta,1,1)},p}(h) \prod_{p\mid P_{\sf(h)}(z_0)}\left(1+\frac{1}{p}\right)^2\ll \frac{2^{\omega(\delta)}}{\varphi(\delta)}\log(z_0)^2\prod_{p\mid \sf(h)}\left(1+\frac{1}{p}\right)^2\\
\ll\frac{2^{\omega(\delta)}}{\varphi(\delta)}\log(z_0)^2\log(\log(\sf(h)))^2,
\end{multline*}
using \cite[Section 22.9]{HardyWright} in the last step. We then use
\[
\frac{1}{\delta} \prod_{p\mid \delta}\frac{2}{1-\frac{1}{p}}\leq \frac{1}{\delta}\prod_{p\mid \delta} 4 = \frac{\sigma_0(\delta)^2}{\delta}
\]
to bound 
\[
\frac{2^{\omega(\delta)}}{\varphi(\delta)}\log(z_0)^2\log(\log(\sf(h)))^2\leq \frac{\sigma_0(\delta)^2}{\delta}\log(z_0)^2\log(\log(\sf(h)))^2.
\]
Writing $\lcm(\ell_{i,j},\ell_{i,k})=\frac{\ell_{i,j}\ell_{i,k}}{\gcd(\ell_{i,j},\ell_{i,k})}$ and bounding (using multiplicativity)
\[
\sigma_0\left(\lcm(\ell_{i,j},\ell_{i,k})\right)^2\leq \sigma_0\left(\ell_{i,j}\right)^2\sigma_0\left(\ell_{i,k}\right)^2,
\]
 we thus want to bound 
\[
\sum_{\substack{\ell_{2,3}\leq D_0\\ \ell_{1,3}\leq D_0\\ \Delta^{B} <\ell_{1,2}\leq D_0 }}  \frac{\mu(\ell_{1,2})^2\mu(\ell_{1,3})^2\mu(\ell_{2,3})^2 \sigma_0\left(\ell_{1,2}\right)^4\sigma_0\left(\ell_{1,3}\right)^4\sigma_0\left(\ell_{2,3}\right)^4 \left(\ell_{1,3},\ell_{2,3}\right)\left(\ell_{1,2},\ell_{2,3}\right)\left(\ell_{1,2},\ell_{1,3}\right)}{\ell_{1,2}^2\ell_{1,3}^2\ell_{2,3}^2}.
\]
We then bound 
\begin{multline*}
\sum_{\ell_{2,3}\leq D_0} \frac{\mu(\ell_{2,3})^2\sigma_0\left(\ell_{2,3}\right)^4\left(\ell_{1,3},\ell_{2,3}\right)\left(\ell_{1,2},\ell_{2,3}\right)}{\ell_{2,3}^2}\leq  \prod_{p\mid \ell_{1,3}\ell_{1,2}} \left(2^4+1\right)\prod_{p}\left(1+\frac{2^4}{p^2}\right)\\
\ll \sigma_0\left(\ell_{1,3}\right)^{5}\sigma_0\left(\ell_{1,2}\right)^{5}.
\end{multline*}
Plugging this in, it remains to bound 
\[
\sum_{\substack{\ell_{1,3}\leq D_0\\ \Delta^{B} <\ell_{1,2}\leq D_0 }}  \frac{\mu(\ell_{1,2})^2\mu(\ell_{1,3})^2 \sigma_0\left(\ell_{1,2}\right)^9\sigma_0\left(\ell_{1,3}\right)^9\left(\ell_{1,2},\ell_{1,3}\right)}{\ell_{1,2}^2\ell_{1,3}^2}.
\]
We then similarly bound the sum over $\ell_{1,3}$ as 
\[
\sum_{\ell_{1,3}\leq D_0} \frac{\mu(\ell_{1,3})^2\sigma_0\left(\ell_{1,3}\right)^9\left(\ell_{1,2},\ell_{1,3}\right)}{\ell_{1,3}^2}\leq \prod_{p\mid \ell_{1,2}} \left(2^9+1\right)\prod_{p}\left(1+\frac{2^9}{p^2}\right)\ll \sigma_0\left(\ell_{1,2}\right)^{10}.
\]
The sum over $\ell_{1,2}$ is now bounded by 
\[
\sum_{\ell_{1,2}>\Delta^B} \frac{\mu(\ell_{1,2})^2 \sigma_0\left(\ell_{1,2}\right)^{19}}{\ell_{1,2}^2}\ll_{\varepsilon}\Delta^{\varepsilon-B}.
\]

We therefore obtain that the contribution from those $\ell_{i,j}$ with at least one of them  larger than $\Delta^B$ is 
\[
\ll_{\varepsilon} \Delta^{\varepsilon-B}\log(z_0)^2\log(\log(\sf(h)))^2.
\]
We finally use \cite[Lemma 11]{BrudernFouvry} to finish the proof for the upper bound. Since we have taken the absolute value at each step, the proof for the lower bound follows by the same argument, using Proposition \ref{prop:SahBound} (1) as a starting point.
 \end{proof}
\section{Sieving and the proof of Theorem \ref{thm:main}}\label{sec:final}

We finally employ a second sieve to prove Theorem \ref{thm:main}.

\begin{proof}[Proof of Theorem \ref{thm:main}]
Suppose that $z_0<z$ and set $P_h(z_0,z):=\prod_{\substack{z_0<p\leq z\\ p\nmid \sf(h)}} p$. Then we see from \eqref{eqn:RosserWeightsProduct} that
\begin{align*}
S_{a,h}\left(\mathcal{A}_{\bm{1}},z\right)&= \sum_{\bm{x}\in \mathscr{S}_{a,h}\left(\mathcal{A}_{\bm{1}},z_0\right)}\prod_{j=1}^3\sum_{\ell_j\mid \gcd(x_j,P_h(z_0,z))} \mu\left(\ell_j\right)\\
&= \sum_{\bm{x}\in \mathscr{S}_{a,h}\left(\mathcal{A}_{\bm{1}},z_0\right)}\sum_{\substack{\ell_1\mid P_h(z_0,z)\\ \ell_1\mid x_1}}\mu(\ell_1)\sum_{\substack{\ell_2\mid P_h(z_0,z)\\ \ell_2\mid x_2}}\mu(\ell_2)\sum_{\substack{\ell_3\mid P_h(z_0,z)\\ \ell_3\mid x_3}}\mu(\ell_3)\\
&\overset{\eqref{eqn:RosserWeightsProduct}}\geq  \sum_{\bm{x}\in \mathscr{S}_{a,h}\left(\mathcal{A}_{\bm{1}},z_0\right)}\sum_{\substack{\ell_1\mid P_h(z_0,z)\\ \ell_1\mid x_1}}\Lambda_{\ell_1}^-\sum_{\substack{\ell_2\mid P_h(z_0,z)\\ \ell_2\mid x_2}}\lambda_{\ell_2}^+\sum_{\substack{\ell_3\mid P_h(z_0,z)\\ \ell_3\mid x_3}}\lambda_{\ell_3}^+\\
&=\sum_{\ell_1\mid P_h(z_0,z)}\Lambda_{\ell_1}^-\sum_{\ell_2\mid P_h(z_0,z)}\lambda_{\ell_2}^+\sum_{\ell_3\mid P_h(z_0,z)}\lambda_{\ell_3}^+S_{a,h}\left(\mathcal{A}_{\bm{\ell}},z_0\right).
\end{align*}
 We may then plug in Proposition \ref{prop:SahMainz0} in the inner sum to obtain (note that $S_{a,h}^{\operatorname{ET}}\left(z_0\right)$ is independent of $\bm{\ell}$)
\begin{multline*}
S_{a,h}\left(\mathcal{A}_{\bm{1}},z\right)\geq \sum_{\ell_1\mid P_h(z_0,z)}\Lambda_{\ell_1}^-\sum_{\ell_2\mid P_h(z_0,z)}\lambda_{\ell_2}^+\sum_{\ell_3\mid P_h(z_0,z)}\lambda_{\ell_3}^+
\bigg(W_{a,h}(z_0)+O_{\varepsilon}\Big(\Delta^Ce^{-s_0}\\
+H(n)^{4}\Delta^{-\frac{2}{3}}\log(D_0)^6\log(\log(\sf(h)))^6 +\Delta^{\varepsilon-B}\log(z_0)^2\log(\log(\sf(h)))^2\Big)\bigg)r_{\pgen(X^{\bm{\ell}})}(h)\\
+O\Bigg(S_{a,h}^{\operatorname{ET}}\left(z_0\right)\hspace{-.5em}\sum_{\substack{\bm{\ell}\in \mathcal{S}_{2^a}^3\\ p\mid \ell_j\implies z_0<p\leq z\\ \text{and }p\nmid \sf(h)\\ \ell_j\leq D}}\hspace{-1em}r_{\pgen(X^{\bm{\ell}})}(h)\Bigg)+O\Bigg(\hspace{-.5em}\sum_{\substack{\bm{\ell}\in \mathcal{S}_{2^a}^3\\ p\mid \ell_j\implies z_0<p\leq z\\ \text{and }p\nmid \sf(h)\\ \ell_j\leq D}}\sum_{\substack{\bm{d}\in\mathcal{S}_{2^a}^3\\ p\mid d_j\implies p\leq z_0\\ \text{or }p\mid \sf(h)\\ d_j\leq D_0}}\hspace{-.5em}\left|r_{X^{\bm{d}\bm{\ell}}}(h)-r_{\pspn(X^{\bm{d}\bm{\ell}})}(h)\right|\Bigg).
\end{multline*}
We first bound the last $O$-term. Lemma \ref{lem:OrthogonalBound} implies that 
\[
\sum_{\substack{\bm{\ell}\in \mathcal{S}_{2^a}^3\\ p\mid \ell_j\implies z_0<p\leq z\\ \text{and }p\nmid \sf(h)\\ \ell_j\leq D}}\sum_{\substack{\bm{d}\in\mathcal{S}_{2^a}^3\\ p\mid d_j\implies p\leq z_0\\ \text{or }p\mid \sf(h)\\ d_j\leq D_0}}\left|r_{X^{\bm{d}\bm{\ell}}}(h)- r_{\pspn(X^{\bm{d}\bm{\ell}})}(h)\right|\\
\ll \hspace{-0.5em}\sum_{\substack{\bm{\ell}\in \mathcal{S}_{2^a}^3\\ p\mid \ell_j\implies z_0<p\leq z\\ \text{and }p\nmid \sf(h)\\ \ell_j\leq D}}\sum_{\substack{\bm{d}\in\mathcal{S}_{2^a}^3\\ p\mid d_j\implies p\leq z_0\\ \text{or }p\mid \sf(h)\\ d_j\leq D_0}}\hspace{-1em}
N_{m,\bm{d}\bm{\ell}}^{\frac{41}{16}+\varepsilon} M_{m,\bm{d}\bm{\ell}}^{\frac{5}{2}+\varepsilon} n^{\frac{231}{512}+\varepsilon}.
\]
Since  $\ell_j\leq D$, $d_j\leq D_0$,  $N_{m,\bm{d}\bm{\ell}}\ll_m d_1^2d_2^2d_3^2\ell_1^2\ell_2^2\ell_3^2$, and $M_{m,\bm{d}\bm{\ell}}\ll_m d_1d_2d_3\ell_1\ell_2\ell_3$, we have 
\begin{multline}\label{eqn:cuspidalsumbound}
\sum_{\substack{\bm{\ell}\in \mathcal{S}_{2^a}^3\\ p\mid \ell_j\implies z_0<p\leq z\\ \text{and }p\nmid \sf(h)\\ \ell_j\leq D}}\sum_{\substack{\bm{d}\in\mathcal{S}_{2^a}^3\\ p\mid d_j\implies p\leq z_0\\ \text{or }p\mid \sf(h)\\ d_j\leq D_0}}\hspace{-1em} N_{m,\bm{d}\bm{\ell}}^{\frac{41}{16}+\varepsilon} M_{m,\bm{d}\bm{\ell}}^{\frac{5}{2}+\varepsilon} n^{\frac{231}{512}+\varepsilon}\ll_m (DD_0)^3 (DD_0)^{\frac{41}{16}\cdot 6+\varepsilon} (DD_0)^{\frac{5}{2}\cdot 3+\varepsilon}h^{\frac{231}{512}+\varepsilon}\\
=(DD_0)^{\frac{207}{8}+\varepsilon}h^{\frac{231}{512}+\varepsilon}.
\end{multline}
We next bound the error term containing $S_{a,h}^{\operatorname{ET}}\left(z_0\right)$. By \eqref{eqn:ratiorgendrgen1intoprod}, we have 
\[
r_{\pgen(X^{\bm{\ell}})}(h)=r_{\pgen(X^{\bm{1}})}(h)\prod_{p\mid \bm{\ell}} \beta_{X^{\bm{\ell}},p}(h),
\]
 By choosing $z_0$ sufficiently large, we may assume that every prime larger than $z_0$ does not divide $(m-2)(m-4)$, and hence we may use Lemma \ref{lem:betabound} (3)  to bound 
\[
\sum_{\substack{\bm{\ell}\in \mathcal{S}_{2^a}^3\\ p\mid \ell_j\implies z_0<p\leq z\\ \text{and }p\nmid \sf(h)\\ \ell_j\leq D}}\hspace{-.2cm}\prod_{p\mid\bm{\ell}}\beta_{X^{\bm{\ell}},p}(h)\ll\hspace{-.15cm} \prod_{p\mid P_h(z_0,z)} \sum_{\bm{c}\in \{0,1\}^3} \beta_{X^{\bm{p^c}},p}(h)\ll\hspace{-.15cm} \prod_{p\mid P_h(z_0,z)} \hspace{-.2cm}\left(1+\frac{1}{p}\right)^6\leq \prod_{p\mid P_h(z_0,z)} \left(\frac{p}{p-1}\right)^{6}.
\]
\begin{extradetails}
\begin{multline*}
\sum_{\substack{\bm{\ell}\in \mathcal{S}_{2^a}^3\\ p\mid \ell_j\implies z_0<p\leq z\\ \text{and }p\nmid \sf(h)\\ \ell_j\leq D}}\prod_{p\mid\bm{\ell}}\beta_{X^{\bm{\ell}},p}(h)\ll \prod_{p\mid P_h(z_0,z)} \sum_{\bm{c}\in \{0,1\}^3} \beta_{X^{\bm{p^c}},p}(h)\ll \prod_{p\mid P_h(z_0,z)} \left(1+\frac{1}{p}\right)^6\\
\leq  \prod_{p\mid P_h(z_0,z)} \left(1-\frac{1}{p}\right)^{-6}=\prod_{p\mid P_h(z_0,z)} \left(\frac{p}{p-1}\right)^{6}.
\end{multline*}
Here we used the fact that 
\[
1+\frac{1}{p}\leq \frac{1}{1-\frac{1}{p}}.
\]
\end{extradetails}
We then use \cite[(3.30) and (3.26)]{RosserSchoenfeld} to bound 
\[
\prod_{p\mid P_h(z_0,z)} \left(\frac{p}{p-1}\right)^{6}\leq \frac{\log(z)^6}{\log(z_0)^6} \left(1+\frac{1}{\log^2(z)}\right)^6\left(1+\frac{1}{2\log^2(z_0)}\right)^6.
\]
\begin{extradetails}
\begin{multline*}
\prod_{p\mid P_h(z_0,z)} \left(\frac{p}{p-1}\right)^{6}=\prod_{p\leq z}\left(\frac{p}{p-1}\right)^{6}\prod_{p\leq z_0}\left(1-\frac{1}{p}\right)^{6}\prod_{\substack{p\mid \sf(h)\\ z_0<p\leq z}}\left(\frac{p}{p-1}\right)^{-6}\\
\leq \prod_{p\leq z}\left(\frac{p}{p-1}\right)^{6}\prod_{p\leq z_0}\left(1-\frac{1}{p}\right)^{6}\leq \frac{\log(z)^6}{\log(z_0)^6} \left(1+\frac{1}{\log^2(z)}\right)^6\left(1+\frac{1}{2\log^2(z_0)}\right)^6.
\end{multline*}
\end{extradetails}
Thus 
\begin{equation}\label{eqn:mainerrortermsumfinalbound}
\sum_{\substack{\bm{\ell}\in \mathcal{S}_{2^a}^3\\ p\mid \ell_j\implies z_0<p\leq z\\ \text{and }p\nmid \sf(h)\\ \ell_j\leq D}}\hspace{-1em}r_{\pgen(X^{\bm{\ell}})}(h)\ll r_{\pgen(X^{\bm{1}})}(h)\left(\frac{\log(z)}{\log(z_0)}\right)^6.
\end{equation}
We next bound $S_{a,h}^{\operatorname{ET}}\left(z_0\right)$. Using Lemma \ref{lem:betabound}, we bound (noting that the worst case occurs when $p\mid (m-4)$)
\[
\prod_{\substack{p\leq z_0,\, p\nmid \text{sf}(h)\\ p\neq 2}} \left(\sum_{\bm{c}\in \{0,1\}^3} \beta_{X^{\bm{p^{c}}},p}(h)\right)
\leq \prod_{\substack{p\leq z_0,\, p\nmid \text{sf}(h)\\ p\neq 2}} \left(1+\frac{1}{p}\right)^{13}\leq\prod_{\substack{p\leq z_0,\, p\nmid \text{sf}(h)\\ p\neq 2}} \frac{1}{\left(1-\frac{1}{p}\right)^{13}}.
\]
Hence  \cite[(3.30)]{RosserSchoenfeld} implies that
\begin{equation}\label{eqn:Uppernotsqfreez0}
\hspace{-.1cm}\prod_{\substack{p\leq z_0,\, p\nmid \text{sf}(h)\\ p\neq 2}} \hspace{-.15cm}\left(\sum_{\bm{c}\in \{0,1\}^3} \beta_{X^{\bm{p^{c}}},p}(h)\right)
\leq\hspace{-.1cm}\prod_{2<p\leq z_0} \frac{1}{\left(1-\frac{1}{p}\right)^{13}}\leq \left(\frac{e^{\gamma}}{2}\right)^{13}\hspace{-.1cm}\log(z_0)^{13}\left(1+\frac{1}{\log^2(z_0)}\right)^{13}.
\end{equation}
Using Lemma \ref{lem:betaboundspecialcase}, the other factors in \eqref{eqn:SahETz0} may be bounded by 
\[
\left(\sum_{\bm{c}\in \{0,1\}^3} \beta_{X^{\bm{3^{c}}},3}(h)\right)^{\delta_{3\mid\text{sf}(h)}\delta_{3\mid(m-2)}}
	\left(\sum_{\bm{c}=(1,1,1)} \beta_{X^{\bm{3^{c}}},3}(h)\right)^{\delta_{3\mid\text{sf}(h)}\delta_{3\nmid(m-2)}}\leq \frac{64}{27}
\]
and
\begin{multline*}
\prod_{\substack{p\mid \text{sf}(h)\\ p\neq2,\, p\nmid 3(m-2)(m-4)}} \left(\sum_{\#\{j:c_j=1\}=2} \beta_{X^{\bm{p^{c}}},p}(h)\right)\leq \frac{1}{\operatorname{sf}(h)^2}\prod_{5\leq p\mid \operatorname{sf}(h)} \left(6\frac{1+p^{-1}}{1-p^{-2}}\right)\\
=\frac{6^{\omega(\operatorname{sf}(h))}}{\operatorname{sf}(h)^2}\frac{\sigma_{-1}(\operatorname{sf}(h))}{\prod_{5\leq p\mid\operatorname{sf}(h)} \left(1-p^{-2}\right)}\leq \zeta(2)\frac{6^{\omega(\operatorname{sf}(h))}}{\operatorname{sf}(h)^2}\sigma_{-1}(\operatorname{sf}(h)).
\end{multline*}
We therefore conclude that 
\begin{equation}\label{eqn:SahETbound}
S_{a,h}^{\operatorname{ET}}\left(z_0\right)\ll 2^{-a}\log(z_0)^{13} \frac{6^{\omega(\operatorname{sf}(h))}}{\operatorname{sf}(h)^2}\sigma_{-1}(\operatorname{sf}(h)).
\end{equation}
Combining \eqref{eqn:SahETbound} with \eqref{eqn:mainerrortermsumfinalbound}, we obtain the upper bound
\begin{equation}\label{eqn:SahError}
 S_{a,h}^{\operatorname{ET}}\left(z_0\right)\hspace{-.8em}\sum_{\substack{\bm{\ell}\in \mathcal{S}_{2^a}^3\\ p\mid \ell_j\implies z_0<p\leq z\\ \text{and }p\nmid \sf(h)\\ \ell_j\leq D}}\hspace{-1em}r_{\pgen(X^{\bm{\ell}})}(h)\ll \frac{\log(z_0)^{13}}{2^a} \frac{6^{\omega(\operatorname{sf}(h))}}{\operatorname{sf}(h)^2}\sigma_{-1}(\operatorname{sf}(h))\left(\frac{\log(z)}{\log(z_0)}\right)^6 r_{\pgen(X^{\bm{1}})}(h).
\end{equation}
For the error terms from the main term, we again note that they are independent of $\bm{\ell}$, so, using \eqref{eqn:mainerrortermsumfinalbound},  they may be bounded by 
\begin{multline}\label{eqn:MainErrorFinal}
\hspace{-.25cm}\left(\frac{\Delta^C}{e^{s_0}}+\frac{H(n)^{4}\log(D_0)^6\log(\log(\sf(h)))^6}{\Delta^{\frac{2}{3}}} +\frac{\log(z_0)^2\log(\log(\sf(h)))^2}{\Delta^{B-\varepsilon}}\right)\hspace{-0.6cm} \sum_{\substack{\bm{\ell}\in \mathcal{S}_{2^a}^3\\ p\mid \ell_j\implies z_0<p\leq z\\ \text{and }p\nmid \sf(h)\\ \ell_j\leq D}}\hspace{-1em}r_{\pgen(X^{\bm{\ell}})}(h)\\
\ll  \left(\frac{\Delta^C}{e^{s_0}}+\frac{H(n)^{4}\log(D_0)^6\log(\log(\sf(h)))^6}{\Delta^{\frac{2}{3}}} +\frac{\log(z_0)^2\log(\log(\sf(h)))^2}{\Delta^{B-\varepsilon}}\right)\left(\frac{\log(z)}{\log(z_0)}\right)^6r_{\pgen(X^{\bm{1}})}(h).
\end{multline}
For the main term, we next write 
\begin{multline}\label{eqn:gcdsplit}
\sum_{\ell_1\mid P_h(z_0,z)}\Lambda_{\ell_1}^-\sum_{\ell_2\mid P_h(z_0,z)}\lambda_{\ell_2}^+\sum_{\ell_3\mid P_h(z_0,z)}\lambda_{\ell_3}^+\prod_{p\mid\bm{\ell}} \beta_{X^{\bm{\ell}},p}(h)\\
=\sum_{\ell_1\mid P_h(z_0,z)}\Lambda_{\ell_1}^-\sum_{\substack{\ell_2\mid P_h(z_0,z)\\ \gcd(\ell_1,\ell_2)=1}}\lambda_{\ell_2}^+\sum_{\substack{\ell_3\mid P_h(z_0,z)\\ \gcd(\ell_1,\ell_3)=1\\ \gcd(\ell_2,\ell_3)=1}}\lambda_{\ell_3}^+\prod_{p\mid\bm{\ell}} \beta_{X^{\bm{\ell}},p}(h)+\sum_{\substack{\bm{\ell}\mid P_h(z_0,z)^3\\ \exists (j,k): \gcd(\ell_j,\ell_k)\neq 1}}\Lambda_{\ell_1}^-\lambda_{\ell_2}^+\lambda_{\ell_3}^+\prod_{p\mid\bm{\ell}} \beta_{X^{\bm{\ell}},p}(h)\\
=\sum_{\ell_1\mid P_h(z_0,z)}\Lambda_{\ell_1}^-\sum_{\substack{\ell_2\mid P_h(z_0,z)\\ \gcd(\ell_1,\ell_2)=1}}\lambda_{\ell_2}^+\sum_{\substack{\ell_3\mid P_h(z_0,z)\\ \gcd(\ell_1,\ell_3)=1\\ \gcd(\ell_2,\ell_3)=1}}\lambda_{\ell_3}^+\prod_{p\mid\bm{\ell}} \beta_{X^{\bm{\ell}},p}(h)+\sum_{\substack{\bm{\ell}\mid P_h(z_0,z)^3\\ \exists (j,k): \gcd(\ell_j,\ell_k)\neq 1}}\Lambda_{\ell_1}^-\lambda_{\ell_2}^+\lambda_{\ell_3}^+\prod_{p\mid\bm{\ell}} \beta_{X^{\bm{\ell}},p}(h)\\
=\sum_{\ell_1\mid P_h(z_0,z)}\Lambda_{\ell_1}^-\prod_{p\mid \ell_1}\beta_{X^{(\ell_1,1,1)},p}(h)\sum_{\substack{\ell_2\mid P_h(z_0,z)\\ \gcd(\ell_1,\ell_2)=1}}\lambda_{\ell_2}^+\prod_{p\mid \ell_2}\beta_{X^{(1,\ell_2,1)},p}(h)\sum_{\substack{\ell_3\mid P_h(z_0,z)\\ \gcd(\ell_1,\ell_3)=1\\ \gcd(\ell_2,\ell_3)=1}}\lambda_{\ell_3}^+\prod_{p\mid \ell_3}\beta_{X^{(1,1,\ell_3)},p}(h)\\
+\hspace{-.2cm}\sum_{\substack{\bm{\ell}\mid P_h(z_0,z)^3\\ \exists (j,k): \gcd(\ell_j,\ell_k)\neq 1}}\hspace{-.45cm}\Lambda_{\ell_1}^-\lambda_{\ell_2}^+\lambda_{\ell_3}^+\prod_{p\mid\bm{\ell}} \beta_{X^{\bm{\ell}},p}(h)=\hspace{-.2cm}\sum_{\ell_1\mid P_h(z_0,z)}\hspace{-.25cm}\Lambda_{\ell_1}^-\prod_{p\mid \ell_1}\beta_{X^{(\ell_1,1,1)},p}(h)\left(\sum_{\ell_2\mid P_h(z_0,z)}\hspace{-.25cm}\lambda_{\ell_2}^+\prod_{p\mid \ell_2}\beta_{X^{(1,\ell_2,1)},p}(h)\right)^2\\
+\sum_{\substack{\bm{\ell}\mid P_h(z_0,z)^3\\ \exists (j,k): \gcd(\ell_j,\ell_k)\neq 1}}\Lambda_{\ell_1}^-\lambda_{\ell_2}^+\lambda_{\ell_3}^+\left(\prod_{p\mid\bm{\ell}} \beta_{X^{\bm{\ell}},p}(h)-\prod_{p\mid \ell_1}\beta_{X^{(\ell_1,1,1)},p}(h)\prod_{p\mid \ell_2}\beta_{X^{(1,\ell_2,1)},p}(h)\prod_{p\mid \ell_3}\beta_{X^{(1,1,\ell_3)},p}(h)\right).
\end{multline}

Abbreviating $\beta_{\ell,g}(h):=\prod_{p\mid \gcd(\ell,g)} \beta_{X^{(\ell,1,1)},p}(h)$, we then rewrite the last sum as 
\begin{multline}\label{eqn:gcdsplitlast}
\sum_{\substack{\bm{g}\mid P_h(z_0,z)^3\\ \bm{g}\neq \bm{1}}} \sum_{\substack{\bm{\ell}\mid P_h(z_0,z)^3\\ \gcd(\ell_1,\ell_2)=g_1,\\ \gcd(\ell_1,\ell_3)=g_2\\ \gcd(\ell_2,\ell_3)=g_3}}\hspace{-.5em}\Lambda_{\ell_1}^-\lambda_{\ell_2}^+\lambda_{\ell_3}^+\left(\prod_{p\mid\bm{\ell}} \beta_{X^{\bm{\ell}},p}(h)-\beta_{\ell_1,0}(h)\beta_{\ell_2,0}(h)\beta_{\ell_3,0}(h)\right)\\
\ll\sum_{\substack{\bm{g}\mid P_h(z_0,z)^3\\ \bm{g}\neq \bm{1}}} \sum_{\substack{\bm{\ell}\mid P_h(z_0,z)^3\\ \gcd(\ell_1,\ell_2)=g_1,\\ \gcd(\ell_1,\ell_3)=g_2\\ \gcd(\ell_2,\ell_3)=g_3}}\left|\prod_{p\mid\bm{\ell}} \beta_{X^{\bm{\ell}},p}(h)-\beta_{\ell_1,0}(h)\beta_{\ell_2,0}(h)\beta_{\ell_3,0}(h)\right|\\
=\sum_{\substack{\bm{g}\mid P_h(z_0,z)^3\\ \bm{g}\neq \bm{1}}} \sum_{\substack{\bm{\ell}\mid P_h(z_0,z)^3\\ \gcd(\ell_1,\ell_2)=g_1,\\ \gcd(\ell_1,\ell_3)=g_2\\ \gcd(\ell_2,\ell_3)=g_3}}\prod_{\substack{p\mid \bm{\ell}\\ p\nmid g_1g_2g_3}}\beta_{X^{\bm{\ell}},p}(h) \left|\prod_{p\mid g_1g_2g_3} \beta_{X^{\bm{\ell}},p}(h)-\beta_{\ell_1,g_1g_2g_3}(h)\beta_{\ell_2,g_1g_2g_3}(h)\beta_{\ell_3,g_1g_2g_3}(h)\right|\\
= \sum_{\substack{g\mid P_h(z_0,z)\\ g\neq 1}}\sum_{\substack{\bm{\ell}\mid P_h(z_0,z)^3\\ g_1=\gcd(\ell_1,\ell_2)\mid g\\ g_2=\gcd(\ell_1,\ell_3)\mid g\\ g_3=\gcd(\ell_2,\ell_3)\mid g\\ \lcm(g_1,g_2,g_3)=g}} \prod_{\substack{p\mid \bm{\ell}\\ p\nmid g}}\beta_{X^{\bm{\ell}},p}(h)\left|\prod_{p\mid g} \beta_{X^{\bm{\ell}},p}(h) - \beta_{\ell_1,g}(h)\beta_{\ell_2,g}(h)\beta_{\ell_3,g}(h)\right|.
\end{multline}
Since $p\mid g$ if and only if $p\mid g_j$ for some $j$, we have $\#\{j:p\mid \ell_j\}=1$ if and only if $p\nmid g$. We then note that we have 
\begin{multline}\label{eqn:maxprod}
\left|\prod_{p\mid g} \beta_{X^{\bm{\ell}},p}(h) - \beta_{\ell_1,g}(h)\beta_{\ell_2,g}(h)\beta_{\ell_3,g}(h)\right|\leq \max\left\{\prod_{p\mid g} \beta_{X^{\bm{\ell}},p}(h),\beta_{\ell_1,g}(h)\beta_{\ell_2,g}(h)\beta_{\ell_3,g}(h)\right\}\\
\leq \prod_{p\mid g} \max\left\{\beta_{X^{\bm{\ell}},p}(h), \beta_{X^{(p,1,1)},p}(h)^{\#\{j:p\mid \ell_j\}}\right\},
\end{multline}
where we use the symmetry $\beta_{X^{(p,1,1)},p}(h)=\beta_{X^{(1,p,1)},p}(h)=\beta_{X^{(1,1,p)},p}(h)$ in the last step.

The product after the sum in \eqref{eqn:gcdsplitlast} thus becomes 
\begin{multline}\label{eqn:maxprodsplit}
\prod_{\substack{p\mid \bm{\ell}\\ \#\{j:p\mid \ell_j\}=1}}\beta_{X^{(p,1,1)},p}(h)\prod_{\substack{p\mid g\\ \#\{j:p\mid \ell_j\}=2}}\max\left\{\beta_{X^{(p,p,1)},p}(h),\beta_{X^{(p,1,1)},p}(h)^2\right\}\\
\times \prod_{\substack{p\mid g\\ \#\{j:p\mid \ell_j\}=3}}\max\left\{\beta_{X^{(p,p,p)},p}(h),\beta_{X^{(p,1,1)},p}(h)^3\right\}.
\end{multline}
Without loss of generality, taking $z_0$ sufficiently large, we may assume that $p\nmid 2(m-2)(m-4)$ for every $p\mid \bm{\ell}$. Thus, using \eqref{eqn:beta4bound}, we may bound \eqref{eqn:maxprodsplit} against 
\begin{multline*}
\leq \prod_{\substack{p\mid \bm{\ell}\\ \#\{j:p\mid \ell_j\}=1}}\frac{2}{p}\prod_{\substack{p\mid g\\ \#\{j:p\mid \ell_j\}=2}}\frac{4}{p^2} \prod_{\substack{p\mid g\\ \#\{j:p\mid \ell_j\}=3\\ p\mid n}}\frac{1}{p(p-1)}\prod_{\substack{p\mid g\\ \#\{j:p\mid \ell_j\}=3\\ p\nmid n}}\frac{8}{p^3}\\
=\frac{1}{\ell_1\ell_2\ell_3} \prod_{p\mid \bm{\ell}}2\prod_{\substack{p\mid g\\ \#\{j:p\mid \ell_j\}=2}}2\prod_{\substack{p\mid g\\ \#\{j:p\mid \ell_j\}=3\\ p\mid n}}\frac{p^2}{2(p-1)}\prod_{\substack{p\mid g\\ \#\{j:p\mid \ell_j\}=3\\ p\nmid n}}4\\
\leq  \frac{2^{\omega\left(\ell_1\ell_2\ell_3\right)}4^{\omega(g)}}{\ell_1\ell_2\ell_3}\gcd\left(g,n,\ell_1,\ell_2,\ell_3\right)\leq \frac{2^{\omega(\ell_1)}}{\ell_1}\frac{2^{\omega(\ell_2)}}{\ell_2}\frac{2^{\omega(\ell_3)}}{\ell_3} 4^{\omega(g)}\gcd\left(g,n,\ell_1,\ell_2,\ell_3\right)
\end{multline*}
where in the second-to-last step we used
\[
\prod_{\substack{p\mid g\\ \#\{j:p\mid \ell_j\}=3\\ p\mid n}} p = \gcd\left(g,n,\ell_1,\ell_2,\ell_3\right)\qquad \text{ and }\qquad  \frac{p}{2(p-1)}\leq 2\leq 4.
\]
Hence we obtain that \eqref{eqn:gcdsplitlast} may be bounded 
\begin{equation}\label{eqn:gellsum}
\ll \sum_{\substack{g\mid P_{h}(z_0,z)\\ g\neq 1}}\sum_{\substack{\bm{\ell}\mid P_h(z_0,z)^3\\ g_{1,2}=\gcd(\ell_1,\ell_2)\mid g\\ g_{1,3}=\gcd(\ell_1,\ell_3)\mid g\\ g_{2,3}=\gcd(\ell_2,\ell_3)\mid g\\ \lcm(g_{1,2},g_{1,3},g_{2,3})=g}} \frac{2^{\omega(\ell_1)}}{\ell_1}\frac{2^{\omega(\ell_2)}}{\ell_2}\frac{2^{\omega(\ell_3)}}{\ell_3} 4^{\omega(g)}\gcd\left(g,n,\ell_1,\ell_2,\ell_3\right).
\end{equation}
Since $g_j:=\lcm(g_{j,k},g_{j,k'})\mid \ell_j$, we may make a change of variables $\ell_j\mapsto g_j\ell_j$ to obtain that \eqref{eqn:gellsum} equals
\begin{multline*}
\sum_{\substack{g\mid P_{h}(z_0,z)\\ g\neq 1\\ g_1,g_2,g_3\mid g\\ \lcm(\gcd(g_1,g_2),\gcd(g_1,g_3),\gcd(g_2,g_3))=g}} \hspace{-1.2cm}\frac{2^{\omega\left(g_1\right)+\omega\left(g_2\right)+\omega\left(g_3\right)}4^{\omega(g)}}{g_1g_2g_3}\gcd\left(n,g_1,g_2,g_3\right) 
\sum_{\substack{\bm{\ell}\mid P_h(z_0,z)^3\\ g_j\nmid \bm{\ell}\\ \gcd(\ell_j,\ell_k)=1}} \frac{2^{\omega(\ell_1)}}{\ell_1}\frac{2^{\omega(\ell_2)}}{\ell_2}\frac{2^{\omega(\ell_3)}}{\ell_3}\\
\leq \sum_{\substack{g\mid P_{h}(z_0,z)\\ g\neq 1\\ g_1,g_2,g_3\mid g\\ \lcm(\gcd(g_1,g_2),\gcd(g_1,g_3),\gcd(g_2,g_3))=g}} \hspace{-1.2cm}\prod_{p\mid g} \frac{2^{3}4p^{\min\{\ord_p(g_1),\ord_p(g_2),\ord_p(g_3)\}}}{p^{\ord_p(g_1)+\ord_p(g_2)+\ord_p(g_3)}} \sum_{\bm{\ell}\mid P_h(z_0,z)^3} \frac{2^{\omega(\ell_1)}}{\ell_1}\frac{2^{\omega(\ell_2)}}{\ell_2}\frac{2^{\omega(\ell_3)}}{\ell_3}.
\end{multline*}
We then note that 
\[
\ord_p(g_1)+\ord_p(g_2)+\ord_p(g_3)-\min\left\{\ord_p(g_1),\ord_p(g_2),\ord_p(g_3)\right\}= 2
\]
because $\lcm(\gcd(g_1,g_2),\gcd(g_1,g_3),\gcd(g_2,g_3))=g$. Writing 
\[
\sum_{\bm{\ell}\mid P_h(z_0,z)^3} \frac{2^{\omega(\ell_1)}}{\ell_1}\frac{2^{\omega(\ell_2)}}{\ell_2}\frac{2^{\omega(\ell_3)}}{\ell_3}=\prod_{p\mid P_h(z_0,z)}\left(1+\frac{2}{p}\right)^3,
\]
we obtain 
\begin{multline}\label{eqn:gsumpprod}
 \leq \sum_{\substack{g\mid P_{h}(z_0,z)\\ g\neq 1\\ g_1,g_2,g_3\mid g}} \prod_{p\mid g} \frac{2^{3}4}{p^2} \prod_{p\mid P_h(z_0,z)}\left(1+\frac{2}{p}\right)^3 =\sum_{\substack{g\mid P_{h}(z_0,z)\\ g\neq 1}}\sigma_0(g)^3 \prod_{p\mid g} \frac{2^{3}4}{p^2} \prod_{p\mid P_h(z_0,z)}\left(1+\frac{2}{p}\right)^3\\
\ll_{\varepsilon} \sum_{\substack{g\mid P_{h}(z_0,z)\\ g\neq 1}} g^{\varepsilon-2}\prod_{p\mid P_h(z_0,z)}\left(1+\frac{2}{p}\right)^3\leq \sum_{g\geq z_0} g^{\varepsilon-2}\prod_{p\mid P_h(z_0,z)}\left(1+\frac{2}{p}\right)^3,
\end{multline}
We then bound 
\begin{equation}\label{eqn:gsumbound}
\sum_{g\geq z_0} g^{\varepsilon-2}\ll \int_{z_0}^{\infty} x^{\varepsilon-2}dx\ll z_0^{\varepsilon-1}.
\end{equation}
Using \cite[(3.30) and (3.26)]{RosserSchoenfeld}, we then bound 
\begin{equation}\label{eqn:pprodbound}
\prod_{p\mid P_h(z_0,z)}\left(1+\frac{2}{p}\right)\leq \prod_{p\mid P_h(z_0,z)}\left(1+\frac{1}{p}\right)^2\leq \prod_{z_0< p\leq z}\frac{1}{\left(1-\frac{1}{p}\right)^2}\ll \frac{\log(z)^2}{\log(z_0)^2}.
\end{equation}
\begin{extradetails}
\begin{multline*}
\prod_{p\mid P_h(z_0,z)}\left(1+\frac{2}{p}\right)\leq \prod_{p\mid P_h(z_0,z)}\left(1+\frac{1}{p}\right)^2\leq \prod_{z_0< p\leq z}\frac{1}{\left(1-\frac{1}{p}\right)^2}\leq \frac{\prod_{p\leq  z_0} \left(1-\frac{1}{p}\right)^2}{\prod_{p\leq  z} \left(1-\frac{1}{p}\right)^2}\\
\leq \frac{\log(z)^2}{\log(z_0)^2}\left(1+\frac{1}{2\log^2(z_0)}\right)^2\left(1+\frac{1}{\log^2(z)}\right)^2\ll \frac{\log(z)^2}{\log(z_0)^2}.
\end{multline*}
\end{extradetails}
Plugging \eqref{eqn:gsumbound} and \eqref{eqn:pprodbound} into \eqref{eqn:gsumpprod} yields 
\[
\sum_{\substack{\bm{\ell}\mid P_h(z_0,z)^3\\ \exists (j,k): \gcd(\ell_j,\ell_k)\neq 1}}\Lambda_{\ell_1}^-\lambda_{\ell_2}^+\lambda_{\ell_3}^+\left(\prod_{p\mid\bm{\ell}} \beta_{X^{\bm{\ell}},p}(h)-\beta_{\ell_1,0}(h)\beta_{\ell_2,0}(h)\beta_{\ell_3,0}(h)\right)\ll_{\varepsilon} z_0^{\varepsilon-1}\frac{\log(z)^6}{\log(z_0)^6}.
\]
Plugging back into \eqref{eqn:gcdsplit}, we conclude that 
\begin{multline}\label{eqn:nogcd}
\sum_{\ell_1\mid P_h(z_0,z)}\Lambda_{\ell_1}^-\sum_{\ell_2\mid P_h(z_0,z)}\lambda_{\ell_2}^+\sum_{\ell_3\mid P_h(z_0,z)}\lambda_{\ell_3}^+\prod_{p\mid\bm{\ell}} \beta_{X^{\bm{\ell}},p}(h)\\
=\sum_{\ell_1\mid P_h(z_0,z)}\Lambda_{\ell_1}^-\prod_{p\mid \ell_1}\beta_{X^{(\ell_1,1,1)},p}(h)\left(\sum_{\ell_2\mid P_h(z_0,z)}\lambda_{\ell_2}^+\prod_{p\mid \ell_2}\beta_{X^{(1,\ell_2,1)},p}(h)\right)^2+O_{\varepsilon}\left(z_0^{\varepsilon-1}\log(z)^6\right).
\end{multline}
By \cite[(6.32)]{IwaniecKowalski}, we have 
\begin{equation}\label{eqn:lambda+bound}
\sum_{\ell_2\mid P_h(z_0,z)}\lambda_{\ell_2}^+\prod_{p\mid \ell_2}\beta_{X^{(1,\ell_2,1)},p}(h)\geq \prod_{\substack{z_0< p\leq z\\ p\nmid \sf(h)}} \left(1-\beta_{X^{(p,1,1)},p}(h)\right).
\end{equation}
We then use \eqref{eqn:beta4bound} to bound (for $z_0\geq 3$)
\[
\prod_{\substack{z_0< p\leq z\\ p\nmid \sf(h)}} \left(1-\beta_{X^{(p,1,1)},p}(h)\right)\geq \prod_{z_0< p\leq z} \left(1-\frac{2}{p}\right)\geq \frac{1}{\zeta(2)}\prod_{z_0< p\leq z} \left(1-\frac{1}{p}\right)^2.
\]
\begin{extradetails}
\begin{multline*}
\prod_{\substack{z_0< p\leq z\\ p\nmid \sf(h)}} \left(1-\beta_{X^{(p,1,1)},p}(h)\right)\geq \prod_{z_0< p\leq z} \left(1-\frac{2}{p}\right)=\prod_{z_0< p\leq z} \left(1-\frac{1}{p}\right)^2 \left(1-\frac{1}{(p-1)^2}\right)\\
\geq \frac{1}{\zeta(2)}\prod_{z_0< p\leq z} \left(1-\frac{1}{p}\right)^2.
\end{multline*}
\end{extradetails}
Using \cite[(3.30) and (3.26)]{RosserSchoenfeld}, for $z_0$ sufficiently large we obtain from \eqref{eqn:lambda+bound} that 
\begin{equation}\label{eqn:lambda+boundfinal}
\sum_{\ell_2\mid P_h(z_0,z)}\lambda_{\ell_2}^+\prod_{p\mid \ell_2}\beta_{X^{(1,\ell_2,1)},p}(h)\gg  \left(\frac{\log(z_0)}{\log(z)}\right)^2
\end{equation}
\begin{extradetails}
We first use \cite[(3.30) and (3.26)]{RosserSchoenfeld} to bound
\[
\prod_{z_0\leq p\leq z} \left(1-\frac{1}{p}\right)^2\geq\left(\frac{\log(z_0)}{\log(z)}\right)^2\left(1+\frac{1}{2\log^2(z)}\right)^{-2}\left(1+\frac{1}{\log^2(z_0)}\right)^{-2}.
\]
We thus obtain from \eqref{eqn:lambda+bound} that 
\[
\sum_{\ell_2\mid P_h(z_0,z)}\lambda_{\ell_2}^+\prod_{p\mid \ell_2}\beta_{X^{(1,\ell_2,1)},p}(h)\gg  \left(\frac{\log(z_0)}{\log(z)}\right)^2
\]
\end{extradetails}
Taking $\kappa=2$ and 
\[
K:=\zeta(2)\left(1+\frac{1}{2\log^2(z)}\right)^{2}\left(1+\frac{1}{\log^2(z_0)}\right)^{2},
\]
we conclude that for $z_0$ sufficiently large
\begin{equation}\label{eqn:Betaprodbound}
\prod_{\substack{z_0< p\leq z\\ p\nmid \sf(h)}} \left(1-\beta_{X^{(p,1,1)},p}(h)\right)\geq K^{-1}\left(\frac{\log(z_0)}{\log(z)}\right)^{\kappa}.
\end{equation}
By choosing $z_0$ sufficiently large, we can take $K$ arbitrarily close to $\zeta(2)$ from above, say $K<1.645$. By \cite[Theorem 6.1]{IwaniecKowalski}, we hence obtain
\begin{equation}\label{eqn:Lambda-boundfinal}
\sum_{\ell_1\mid P_h(z_0,z)}\Lambda_{\ell_1}^-\prod_{p\mid \ell_1}\beta_{X^{(\ell_1,1,1)},p}(h)>\left(1-e^{19-s}(1.645)^{10}\right)\prod_{\substack{z_0< p\leq z\\ p\nmid \sf(h)}} \left(1-\beta_{X^{(p,1,1)},p}(h)\right).
\end{equation}
One verifies that for $s\geq 24$ we have 
\[
1-e^{19-s}(1.645)^{10}>0.
\]
Plugging \eqref{eqn:lambda+boundfinal} and \eqref{eqn:Lambda-boundfinal} into \eqref{eqn:nogcd} and then using \eqref{eqn:Betaprodbound} yields
\begin{multline*}
\sum_{\ell_1\mid P_h(z_0,z)}\Lambda_{\ell_1}^-\sum_{\ell_2\mid P_h(z_0,z)}\lambda_{\ell_2}^+\sum_{\ell_3\mid P_h(z_0,z)}\lambda_{\ell_3}^+\prod_{p\mid\bm{\ell}} \beta_{X^{\bm{\ell}},p}(h)\\
\geq (1.645)^{-3}\left(1-e^{19-s}(1.645)^{10}\right)\left(\frac{\log(z_0)}{\log(z)}\right)^6+ O_{\varepsilon}\left(z_0^{\varepsilon-1}\log(z)^6\right).
\end{multline*}
Taking $z_0=\log(z)^{13}$, we have 
\begin{equation}\label{eqn:maintermsumfinalbound}
\sum_{\ell_1\mid P_h(z_0,z)}\Lambda_{\ell_1}^-\sum_{\ell_2\mid P_h(z_0,z)}\lambda_{\ell_2}^+\sum_{\ell_3\mid P_h(z_0,z)}\lambda_{\ell_3}^+\prod_{p\mid\bm{\ell}} \beta_{X^{\bm{\ell}},p}(h)
\gg \left(\frac{\log(z_0)}{\log(z)}\right)^{6}.
\end{equation}
Choosing $a$ so that $1-\frac{3}{2^a}+\frac{3}{2^{2a}}-\frac{1}{2^{2a}}>\frac{1}{3}$ and again using \cite[(3.30)]{RosserSchoenfeld}, we conclude that 
\[
W_{a,h}(z_0)\gg \frac{1}{\log(z_0)^3}.
\]
Thus the main term has a lower bound 
\begin{equation}\label{eqn:mainwithWah}
\gg \frac{\log(z_0)^3}{\log(z)^6} r_{\pgen(X^{\bm{1}})}(h).
\end{equation}
Combining \eqref{eqn:mainwithWah} with \eqref{eqn:cuspidalsumbound}, \eqref{eqn:SahETbound}, and \eqref{eqn:MainErrorFinal}, we conclude that 
\begin{multline}\label{eqn:almostdone}
S_{a,h}\left(\mathcal{A}_{\bm{1}},z\right)\gg \Bigg( \frac{\log(z_0)^3}{\log(z)^6} +O\bigg( \frac{\log(z_0)^{13}}{2^a} \frac{6^{\omega(\operatorname{sf}(h))}}{\operatorname{sf}(h)^2}\sigma_{-1}(\operatorname{sf}(h))\left(\frac{\log(z)}{\log(z_0)}\right)^6+\frac{\Delta^C}{e^{s_0}}\\
+\frac{H(n)^{4}\log(D_0)^6\log(\log(\sf(h)))^6}{\Delta^{\frac{2}{3}}} +\frac{\log(z_0)^2\log(\log(\sf(h)))^2}{\Delta^{B-\varepsilon}}\left(\frac{\log(z)}{\log(z_0)}\right)^6\bigg)\Bigg)r_{\pgen(X^{\bm{1}})}(h)\\
+O_{\varepsilon}\left((DD_0)^{\frac{207}{8}+\varepsilon}h^{\frac{231}{512}+\varepsilon}\right).
\end{multline}
Writing $z=h^{\theta}$, we choose $s=24$ and $D_0=h^{\varepsilon}$, so $DD_0\ll_{\varepsilon} h^{24\theta+\varepsilon}$ and  $s_0\gg_{\varepsilon} \frac{\log(h)}{\log(\log(h))}$. We also choose 
\[
\Delta=H(n)^6\log(h)^{\max\{\frac{13}{B},\frac{39}{2}\}}.
\]
We further assume that
\begin{equation}\label{eqn:sflowerboundassume}
2^{a}\operatorname{sf}(h)^{2-\varepsilon}>\log(z)^{13},
\end{equation}
Then, since (noting that $6^{\omega(\operatorname{sf}(h))}\sigma_{-1}(\operatorname{sf}(h))\ll \operatorname{sf}(h)^{\varepsilon}$)
\[
\frac{\log(z_0)^{13}}{2^a} \frac{6^{\omega(\operatorname{sf}(h))}}{\operatorname{sf}(h)^2}\sigma_{-1}(\operatorname{sf}(h))\left(\frac{\log(z)}{\log(z_0)}\right)^6\ll_{\varepsilon} \log(z_0)^{7}\log(z)^{-7+\varepsilon}\ll_{\varepsilon} \log(z)^{-7+\varepsilon},
\]
 \eqref{eqn:almostdone} becomes
\begin{multline}\label{eqn:cuspdone}
S_{a,h}\left(\mathcal{A}_{\bm{1}},z\right)\gg \Bigg( \frac{\log(z_0)^3}{\log(z)^6} +O\bigg( \frac{\Delta^C}{e^{s_0}}+\frac{H(n)^{4}\log(D_0)^6\log(\log(\sf(h)))^6}{\Delta^{\frac{2}{3}}}\\
 \hspace{1.5cm} +\frac{\log(z_0)^2\log(\log(\sf(h)))^2}{\Delta^{B-\varepsilon}}\left(\frac{\log(z)}{\log(z_0)}\right)^6\bigg)\Bigg)r_{\pgen(X^{\bm{1}})}(h)+O_{\varepsilon}\left(h^{621\theta+ \frac{231}{512}+\varepsilon}\right).
\end{multline}
We then note that $\log(\log(\sf(h)))\ll_{\varepsilon}\log(h)^{\varepsilon}$ and $\Delta\gg H(n)^6\log(h)^{\frac{39}{2}}$ together with $\log(D_0)\ll \log(h)$ implies that
\[
\frac{H(n)^{4}\log(D_0)^6\log(\log(\sf(h)))^6}{\Delta^{\frac{2}{3}}}\ll_{\varepsilon} \log(h)^{-7+\varepsilon},
\]
while $\Delta\gg \log(h)^{\frac{13}{B}}$ implies that 
\[
\frac{\log(z_0)^2\log(\log(\sf(h)))^2}{\Delta^{B-\varepsilon}}\left(\frac{\log(z)}{\log(z_0)}\right)^6\ll \log(h)^{-7+\varepsilon}\log(z_0)^{-4}.
\]
Since $s_0\gg_{\varepsilon} \frac{\log(h)}{\log(\log(h))}$ and (as in \cite[p. 95]{BrudernFouvry}, noting that $H(n)\ll \prod_{p|n}\left(1+p^{-\frac{2}{3}}\right)\ll\prod_{p|n}\left(1+p^{-\frac{1}{2}}\right)$)
\[
\log(\Delta)\ll\log(h)^{\frac{1}{2}},
\]
we conclude that \eqref{eqn:cuspdone} may be written as 
\begin{equation}\label{eqn:SahFinal}
S_{a,h}\left(\mathcal{A}_{\bm{1}},z\right)\gg \frac{\log(z_0)^3}{\log(z)^6}r_{\pgen(X^{\bm{1}})}(h)+O_{\varepsilon}\left(h^{621\theta+ \frac{231}{512}+\varepsilon}\right).
\end{equation}
We now use Lemma \ref{lem:X1lower} to obtain that this is 
\[
\gg_{m,\theta,\varepsilon} h^{\frac{1}{2}-\varepsilon} +O\left(h^{621\theta+ \frac{231}{512}+\varepsilon}\right).
\]
Taking 
\[
\theta= \frac{1}{12719}<\frac{25}{512\cdot 621},
\]
we have 
\[
621\theta+\frac{231}{512}<\frac{1}{2},
\]
and hence for $h$ sufficiently large we have
\[
S_{a,h}\left(\mathcal{A}_{\bm{1}},z\right)>0. 
\]
Hence in \eqref{eqn:sum3mgonal} we may take that odd $p\mid x$ implies that 
\[
p>h^{\theta}=\left(8(m-2)n+3(m-4)^2\right)^{\theta}\gg_m n^{\theta}=n^{\frac{1}{12719}}
\]
and $\ord_2(x)< a$ (with $a\geq 2$), and likewise the same property for $y$ and $z$. If 
\begin{equation}\label{eqn:ppowers}
\sum_{p>n^{\frac{1}{12719}}}\ord_p(x) = \alpha>0,
\end{equation}
then 
\[
p_m(x)\geq \frac{(m-2)x^2}{2}-\frac{|m-4|\cdot |x|}{2}\geq \frac{m-\frac{5}{2}}{2} x^2,
\]
where the last inequality holds because $|x|>n^{\frac{\alpha}{12719}}>2m$ for $n$ sufficiently large (depending on $m$) and $\alpha>0$. Then, letting $\beta$ be the maximum of the respective choices of $\alpha$ for $x$, $y$, and $z$ from \eqref{eqn:ppowers}, we have 
\[
n=p_m(x)+p_m(y)+p_m(z)\gg_{m} n^{\frac{2\beta}{12719}},
\]
and we conclude that 
\[
\beta\leq \frac{12719}{2},
\]
which implies that $\beta\leq 6359$. Overall, we obtain that $x$, $y$, and $z$ are divisible by at most $6359+a$ primes. In particular, if $\sf(h)$ is sufficiently large (i.e., $\sf(h)\gg \log(h)^{7}$), then by \eqref{eqn:sflowerboundassume} we may take $a=2$ and have at most $6361$ primes. 

Note further that 
\[
\#\{h<X: \sf(h)<\log(h)^7\}\leq \log(X)^7\sqrt{X}.
\]
Hence for a set of $h$ of density $1$ we have at most $6361$ primes.
\end{proof}

\end{document}